\theoremstyle{Theorem A}
\theoremstyle{Theorem B}
\theoremstyle{Theorem C}
\theoremstyle{Theorem D}
\theoremstyle{Theorem E}
\theoremstyle{definition}
\newtheorem{maintheorem}{Theorem}
\numberwithin{equation}{section}
\numberwithin{figure}{section}
\theoremstyle{plain}
\newtheorem*{cor*}{\protect\corollaryname}
\theoremstyle{plain}
\newtheorem{thm}{\protect\theoremname}[section]
\theoremstyle{definition}
\newtheorem{defn}[thm]{\protect\definitionname}
\theoremstyle{question}
\theoremstyle{remark}
\newtheorem{rem}[thm]{\protect\remarkname}
\theoremstyle{plain}
\newtheorem{prop}[thm]{\protect\propositionname}
\theoremstyle{plain}
\newtheorem{lem}[thm]{\protect\lemmaname}
\theoremstyle{plain}
\newtheorem{cor}[thm]{\protect\corollaryname}
\numberwithin{equation}{section}
\numberwithin{figure}{section}
 \let\footnote=\endnote
\theoremstyle{definition}
\def\R{\mathbb{R}}
\def\H{\mathbb{H}}
\def\N{\mathbb{N}}
\def\Z{\mathbb{Z}}
\keywords{}
\subjclass[2000]{}
\def\Si{\Sigma}
\def\R{\mathbb{R}}
\def\loc{\text{loc}}
\def\H{\mathcal{H}}
\def\vep{\varepsilon}
\def\A{\mathcal{A}}
\def\B{\mathcal{B}}
\def\M{\mathcal{M}_{\text{inv}}}
\def\Sig{\Sigma}
\def\glr{\text{GL}(d,\R)}
  \providecommand{\corollaryname}{Corollary}
  \providecommand{\definitionname}{Definition}
  \providecommand{\lemmaname}{Lemma}
  \providecommand{\propositionname}{Proposition}
  \providecommand{\remarkname}{Remark}
  \providecommand{\theoremname}{Theorem}
\providecommand{\theoremname}{Theorem}
\definecolor{lime}{HTML}{A6CE39}
\DeclareRobustCommand{\orcidicon}{
	\begin{tikzpicture}
	\draw[lime, fill=lime] (0,0) 
	circle [radius=0.16] 
	node[white] {{\fontfamily{qag}\selectfont \tiny ID}};
	\draw[white, fill=white] (-0.0625,0.095) 
	circle [radius=0.007];
	\end{tikzpicture}
	\hspace{-2mm}
}
\renewcommand{\le}{\leqslant}
\renewcommand{\leq}{\leqslant}
\renewcommand{\geq}{\geqslant}
\renewcommand{\ge}{\geqslant}
\author{Reza Mohammadpour \orcidA{} and Paulo Varandas
\orcidB{}}
\address{Reza Mohammadpour, Department of Mathematics, Uppsala University, Box 480, SE-75106, Uppsala, SWEDEN.}
\email{reza.mohammadpour@math.uu.se}
\address{Paulo Varandas, Center for Research and Development in Mathematics and Applications (CIDMA), 
Department of Mathematics, University of Aveiro, 3810-193 Aveiro, Portugal \& Departamento de Matem\'atica, Universidade Federal da Bahia, Salvador, Brazil.}
\email{paulo.varandas@ufba.br}
\date{\today}
\subjclass[2010]{28A80, 28D20, 37D35, 37H15 }
\keywords{Lyapunov exponents, multifractal formalism,  relative variational principle, topological entropy, fiber-bunched cocycles, Anosov diffeomorphisms, expanding repellers. }
\begin{document}

\title[Multifractal formalism for Lyapunov exponents]{Multifractal formalism for Lyapunov exponents \\of fiber-bunched linear cocycles }


\maketitle
\begin{abstract}
We develop a higher-dimensional extension of multifractal analysis for typical fiber-bunched linear cocycles. 
Our main result is a relative variational principle, which shows that the topological entropy of Lyapunov exponent level sets can be approximated by the metric entropy of ergodic measures fully concentrated on those level sets, addressing a question posed by Breuillard and Sert \cite{BS21}. We also establish a variational principle for the generalized singular value function. As an application to dynamically defined linear cocycles, we obtain a multifractal formalism for open sets of $C^{1+\alpha}$ repellers and Anosov diffeomorphisms.
\end{abstract}

\section{Introduction}

The multifractal formalism in dynamical systems aims to describe the fine structures of level sets, usually obtained by limit theorems, and has attracted significant interest from the mathematics and physics communities. The most classic example is the level sets defined by the convergence of Birkhoff averages, which we now detail. 
Given a continuous map $T$ on a compact metric space $X$ and a continuous observable $f: X \to \mathbb R$, Birkhoff’s ergodic theorem ensures that sequence
$ 
\big(\frac1n\sum_{j=0}^{n-1} f(T^j(x))\big)_{n\ge 1} 
$ 
is almost everywhere convergent, with respect to any $T$-invariant probability measure.
This motivates us to consider the multifractal decomposition 
\begin{equation}
\label{eq:multifractal-decomposition}
X=\bigcup_{\alpha \in \mathbb R} E_f(\alpha) \; \cup \; I_f
\end{equation}
where 
$$
E_f(\alpha)=\Big\{ x\in X \colon \lim_{n\to\infty} \frac1n\sum_{j=0}^{n-1} f(T^j(x)) =\alpha \Big\}
$$
and the set $I_f$ denotes the irregular set, constituted by points whose Birkhoff averages 
do not converge. The complexity of the level sets $E_f(\alpha)$ can be measured in terms of entropy (see e.g. \cite{Bowen-entropy}) or Hausdorff dimension (see e.g. \cite{BaSc}), just to mention a few. Early contributions on the multifractal analysis can be traced back to Besicovitch and the global picture is nowadays quite well understood especially in the case of hyperbolic and nonuniformly hyperbolic dynamical system and H\"older continuous observables (cf. \cite{BSS, FFW, IJ15,JJOP, JT21,O, Olsen, Shi} and references therein).  Moreover, even though the set $I_f$ is negligible from the ergodic viewpoint, it can be a Baire generic set with full topological entropy or  full Hausdorff dimension (see e.g. \cite{BaSc,BoVa,CaVa}, just to mention a few).

\smallskip 

The multifractal formalism for Birkhoff averages of vector valued maps over the shift was initiated in \cite{BSS,FFW}, where the authors consider a higher-dimensional version of the multifractal decomposition ~\eqref{eq:multifractal-decomposition} by studying level sets $E_f(\vec \alpha)$, $\vec\alpha\in \R^k$ determined by the Birkhoff averages of H\"older continuous observables $f: X \to \mathbb R^k$, $k\ge 1$. 
A first key distinction arises between the one-dimensional and higher-dimensional settings, since when $k\geq 1$, the higher-dimensional spectra may form a non-convex domain and may even have empty interior (see \cite{BSS} for more details).

\smallskip 
 In recent years, motivated by several applications to thermodynamic formalism, dimension theory, and large deviations, the classical notion of potential in dynamical systems has been extended to more general frameworks that include sequences of continuous potentials $\mathcal F=(f_n)_{n\ge 1}$, under mild additivity assumptions. Some contributions on the description of the complexity of level sets 
\begin{equation}
\label{eqdefnonadF}
E_{\mathcal F}(\alpha)=\Big\{ x\in X \colon \lim_{n\to\infty} \frac1n f_n(x) =\alpha \Big\}    
\end{equation}
and irregular set $I_{\mathcal F}$
 include \cite{BoVa2,Cao,CFH,CaVa,falconer94b,O}, with important applications.
Among the more general results, we emphasize that Feng and Huang \cite{FH} computed the topological entropy of level sets for sequences of vector valued asymptotically additive potentials (we refer the reader to \cite[Theorem~5.2]{FH} for the precise statements). 
 Nevertheless, Cuneo \cite{Cuneo} revealed that asymptotically additive sequences of real-valued potentials do not fundamentally extend the scope of the theory beyond the standard additive case for continuous observables.


\smallskip

 Lyapunov exponents can be used to quantify the exponential rates of divergence or convergence of nearby trajectories in smooth dynamical systems. They characterize chaos and stability through the sensitivity of orbits to initial conditions and are used to study the statistical properties of dynamical systems in ergodic theory.
These exponents arise from the exponential growth rates in derivative cocycles, which are our primary focus and belong to the broader class of linear cocycles.
 For that reason, in this paper we address the multifractal analysis of Lyapunov exponents for cocycles taking values on the non-abelian Lie group $\glr$  and applications to smooth dynamical systems. Given a linear cocycle $\A:\Sigma \to \glr$ over a two-sided subshift
of finite type $(\Sigma, T)$, $x\in X$ and $n\ge 1$ consider
$$
\mathcal{A}^{n}(x):=\mathcal{A}\left(T^{n-1} (x)\right) \ldots \mathcal{A}(x)
$$
and 
$\mathcal{A}^{-n}(x):=\mathcal{A}(T^{-n}(x))^{-1}\mathcal{A}(T^{-n+1}(x))^{-1} \dots \, \mathcal{A}(T^{-1}(x))^{-1}.$ 
For each
$T$-invariant probability measure $\mu$ such that $\log\|\mathcal A^{\pm 1}\|\in L^1(\mu)$, Oseledets' theorem ensures that for $\mu$-almost every $x$ there exist $k(x)\ge 1$ and an $\mathcal A$-invariant splitting $\mathbb R^d=E^1_x \oplus E^2_x \oplus \dots \oplus E_x^{k(x)}$ so that the \emph{Lyapunov exponents} are well defined by the limits
\begin{equation*}
    \lambda_i(\mathcal A,x)=\lim_{n\to\infty} \frac1n \log \|\mathcal A^n(x)v\|, \quad \text{for every $v\in E^i_x\setminus\{0\}$ }.
\end{equation*}
We denote by 
$ 
\chi_{1}(\mathcal{A}, x ) \geq \chi_{2}(\mathcal{A}, x )\geq \ldots \geq \chi_{d}(\mathcal{A}, x )
$  
the Lyapunov exponents, counted with multiplicity, of the cocycle $\mathcal A$ at $x\in \Sigma$. Set also 
$\chi_i(\mu, \A):=\int \chi_i(\mathcal{A}, \cdot ) d\mu $  for $1\le i \le d.$
We note that the Lyapunov exponents $\chi_i(\mathcal A,x)$ can be computed as 
$$
\chi_i(\mathcal A,x)
= \lim_{n\to \infty}\frac{1}{n}\log \sigma_{i}(\mathcal{A}^{n}(x))
$$
where $\sigma_{1}, \ldots, \sigma_d$ are the singular values, listed in decreasing order according to multiplicity \cite{horn1994topics}.
 The \textit{Lyapunov spectrum} of the linear cocycle 
$\mathcal A$ is the set
\begin{equation}
\label{def:Lyapunov-spectrum}
\vec{L}=\bigg\{\vec{\alpha} \in \R^{d}: 
E(\vec{\alpha}) \neq \emptyset \bigg\}  \end{equation}
where,  for each
$\vec{\alpha}:=(\alpha_1, \ldots, \alpha_d) \in \R^{d}$, the $\vec{\alpha}$-\textit{level set} is
 defined by 
 \begin{equation}\label{eq:123defl}
E(\vec{\alpha})=\bigg\{ x\in \Sigma: \lim_{n\to \infty}\frac{1}{n}\log \sigma_{i}(\mathcal{A}^{n}(x))=\alpha_i \text{  for every } 1\le i \le d \bigg\}.   
 \end{equation}

The level sets in~\eqref{eq:123defl} are substantially harder to describe than the level sets for the top Lyapunov exponent of linear cocycles. Indeed, the behavior of the full vector of Lyapunov exponents for matrix cocycles is much more intricate than that of the top  Lyapunov exponent alone. We refer the reader to \cite{Mohammadpour-survey, climenhaga, Barreira} for further background and references. To overcome these difficulties, several works have focused on studying the sets $E(\vec{\alpha})$ for matrix cocycles that satisfy a strong condition called domination.  This condition is open but not generic and, in essence, requires a uniform gap between successive singular values of the matrices $\A^{n}(x)$, independent of the base point $x$ and the iterate $n$ (see Subsection \ref{subsecdominatedsubs} for a precise definition). \color{black} The level sets $E(\vec{\alpha})$ have been analyzed for certain restricted classes of linear cocycles $\A$
 satisfying additivity-type property, such as domination (see \cite{BG06, FH}).


\smallskip

First results on the multifractal formalism of the $\vec{\alpha}$-level sets for Lyapunov exponents in ~\eqref{eq:123defl}  for generic matrix cocycles have been obtained by the first-named author
in the special case of locally constant linear cocycles \cite{Moh22-entropy, Moh23}, where the analysis boils down to understanding products of matrices.  However, these results do not contribute to the broader goal of describing the multifractal formalism of Lyapunov exponents in smooth dynamics, as derivative cocycles are rarely locally constant.

\smallskip
We say that a H\"older continuous cocycle $\mathcal{A}:\Sigma \to \glr$ over a topologically mixing subshift of finite type $(\Sigma, T)$ is called \textit{fiber bunched} if 
\[
\|\mathcal{A}(x)\|\|\mathcal{A}(x)^{-1}\|<2^{\alpha}
\quad\text{for every $x\in \Sigma$},
\]
where the factor $2$ appears in view of the metric on the shift space.
The fiber-bunching assumption is an open condition in the space of Ho\"lder continuous cocycles, and it roughly means that the cocycle is nearly conformal.

\smallskip
In this paper, we restrict our attention to an open and dense class of cocycles within the space of H\"older continuous and fiber-bunched cocycles. More precisely, we consider the class of typical cocycles - introduced by Bonatti and Viana \cite{BV04} and Avila and Viana \cite{AV07-acta} - which is known to form an open and dense subset
within the set of fiber-bunched
cocycles \cite{AV07-acta,BV04,park20}. We will study the thermodynamic formalism of such cocycles, namely by describing the topological entropy of Lyapunov level sets.

\smallskip
The multifractal formalism of linear cocycles has been considered before, under some strong geometric constraints. More precisely, 
under a domination condition, the Lyapunov exponents vary smoothly with the cocycle, and the multifractal formalism can be reduced to that of Birkhoff averages (see e.g. \cite{Cuneo,BG06, FH, Moh22,Ruelle1979} and references therein). In the absence of a domination condition, the Lyapunov exponents are often discontinuous (cf. \cite{BochiViana2005}), and there is no reduction of the multifractal formalism for Lyapunov exponents to the additive framework. Note that typical cocycles need not have a domination condition (see e.g. \cite{Moh22}).
%
\color{black}
In order to overcome the lack of domination, one of the key novelties of this paper is the construction of special induced subsystems that resemble the construction of horseshoes for nonuniformly hyperbolic dynamical systems due to Katok \cite{Katok}.

A similar idea in the context of the higher-dimensional multifractal formalism has been previously considered by  \cite{BJKR, Moh22-entropy, feng-sh},
in the setting of locally constant cocycles.

\color{black}
This construction - of independent interest - combined with a relation between the entropy and the Lyapunov exponents of the original cocycle and the induced cocycle, allowed us to provide a full characterization of the topological entropy of Lyapunov level sets. Specifically, we establish a relative variational principle for the topological entropy of level sets and for the topological pressure of the singular value function, as well as a relative variational principle for the topological entropy of Lyapunov exponent level sets in the case of typical cocycles (see Theorems~\ref{thmA} and \ref{thmD} for the precise statements).
\color{black}

One of the key ingredients in the proof is that 
 typical cocycles are simultaneously quasi-multiplicative (see Proposition \ref{typical cocycles are qm}), which enables us to prove the existence of the limit of the topological pressure associated with the generalized singular value function (see Lemma \ref{topological_pressure}).

\smallskip
 Considering this class of fiber-bunched linear cocycles offers a significant advantage in terms of applications.
Indeed, if 
$f: M \rightarrow M$ is a smooth map or diffeomorphism of a closed Riemannian manifold $M$ 
and $E \subset T M$ is a $D f$-invariant sub-bundle,  the derivative map restricted to $E$ is a cocycle generated by the map $\mathcal{A}(x)=$ $D_x f: E_x \rightarrow E_{f(x)}$. 
 In the special case that
$f$ is uniformly expanding or uniformly hyperbolic such a cocycle can be modeled by a H\"older continuous cocycle over a subshift of finite type (see Section~\ref{sec: proofthmD} for details). Hence, building over the results for more general linear cocycles, we derive a multidimensional multifractal formalism for certain classes of Anosov diffeomorphisms  and repellers (see Theorems~\ref{thmE} and ~\ref{thmF} for the precise statements).

\smallskip
This paper is organized as follows. Section~\ref{sec:statements} presents the statement of the main results. In Section~\ref{prel}, we recall some preliminary results on symbolic dynamics, exterior powers, and topological entropy of invariant but not necessarily compact sets, and we define fiber bunched and typical cocycles in Section~\ref{sec:linearcocycles}. In Section~\ref{sec:proofupperbddthmA1}, we show that typical cocycles are simultaneously quasi-multiplicative and prove the upper bound of Theorem~\ref{thmA}. We introduce the concepts of domination and dominated subsystems in detail in Section~\ref{dominated_subsystem}. The proofs of the main results are presented in Sections~\ref{Sec: proof of theoremA}-\ref{sec:proofapp}. 

\section{Statement of the main results}
\label{sec:statements}

\subsection{Setting}
Given a transition matrix $Q \in \mathcal M_{k\times k}(\{0,1\})$ the one-sided
subshift of finite type determined by $Q$ is a  left shift map
$T : \Sigma_{Q}^{+} \to  \Sigma_{Q}^{+} $ defined as $T(x_n)_{n\in \N}$ = $(x_{n+1})_{n\in \N}$, acting on the space of sequences
\[
\Sigma_{Q}^{+}:=\Big\{x=(x_{i})_{i\in \N} : x_{i}\in \{1,...,k\} \hspace{0.2cm}\textrm{and} \hspace{0.1cm}Q_{x_{i}, x_{i+1}}=1 \hspace{0.2cm}\textrm{for all}\hspace{0.1cm}i\in \N\Big\}.
\]
Similarly, we define a two-sided subshift of finite type $(\Sigma_{Q}, T)$  on the space 
\[
\Sigma_{Q}:=\Big\{x=(x_{i})_{i\in \Z} : x_{i}\in \{1,...,k\} \hspace{0.2cm}\textrm{and} \hspace{0.1cm}Q_{x_{i}, x_{i+1}}=1 \hspace{0.2cm}\textrm{for all}\hspace{0.1cm}i\in \Z\Big\}.
\]
For notational simplicity, we shall omit the dependence of $Q$ on the space $\Sigma_Q$ and $\Sigma_{Q}^{+}$ and simply write $\Sigma$ and $\Sigma^{+}$, respectively.
We denote by $\mathcal{L}_n$ the set of all admissible words of length $n$ of $\Sigma_Q$ and write $\mathcal{L}=\bigcup_{n\ge 1} \mathcal{L}_n$. For each $I\in \mathcal L$ let $|I|$ denote the length of the word $I$ and let
$$
[I]=\bigg\{x\in \Sigma_Q \colon x_j=i_j \; \text{for every } \, 0\le j \le |I|-1\bigg\}
$$
be the cyclinder set in $\Sigma_Q$ determined by $I$.
The case where all entries of the matrix $Q$ are equal to 1 corresponds to the \textit{full shift}. 
We denote by $\mathcal{M}_{\text{inv}}(T)$ the space of $T$-invariant probability measures.

For any linear cocycle $\mathcal{A}: \Sigma \rightarrow \text{GL}(d,\R)$ and $\mathrm{I} \in \mathcal{L}$, we define
\begin{equation}\label{def-A(I)}
  \|\mathcal{A}(\mathrm{I})\|:=\max _{x \in[I]}\left\|\mathcal{A}^{[I]}(x)\right\|.
\end{equation}

\color{black}

\smallskip
In order to describe the level sets of Lyapunov exponents for linear cocycles we shall need a generalization of Falconer's singular value function.
Given $A\in \mathcal M_{d\times d}(\mathbb R)$,
Falconer's \textit{singular value function} 
$\varphi^{s}({A})$ $(s\ge 0)$ 
is defined by
$$
\varphi^{s}({A}) =
    \begin{cases}
    \begin{array}{ll}
       \sigma_{1}({A}) \cdots \sigma_{k}({A}) \sigma_{k+1}({A})^{s-k},  & \text{if $0\le k \le d-1$ and $s\in [k,k+1]$}  \\
        \det({A})^{\,\frac{s}{d}} & 
        \text{if $s\geq d$}
    \end{array}   
    \end{cases}.
$$
%
 Given a vector $q:=(q_{1}, q_2,\dots,  q_{d})\in \R^d$, we define the \textit{generalized singular value function} $\psi^{q_{1}, \ldots, q_{d}}: \mathcal M_{d\times d}(\mathbb R) \rightarrow[0, \infty)$ as
\begin{equation}
    \label{eq:defgenps}
\psi^{q_{1}, \ldots, q_{d}}({A}):=\sigma_{1}({A})^{q_{1}} \cdots \sigma_{d}({A})^{q_{d}}=\left(\prod_{i=1}^{d-1}\left\|{A}^{\wedge i}\right\|^{q_{i}-q_{i+1}}\right)\left\|{A}^{\wedge d}\right\|^{q_{d}}.
\end{equation}
When $s \in [0, d]$, the singular value function $\varphi^{s}(A)$ coincides with the generalized singular value function 
 $\psi^{q_{1}, \ldots, q_{d}}({A})$ where 
$$
\left(q_{1}, \ldots, q_{d}\right)=(\underbrace{1, \ldots, 1}_{m \text { times }}, s-m, 0, \ldots, 0)
\quad\text{and}\quad m=\lfloor s\rfloor.
$$
For notational simplicity we shall write $\psi^{q}({A}):=\psi^{q_{1}, \ldots, q_{d}}({A})$
for each $A\in \mathcal M_{d\times d}(\mathbb R)$. 

\begin{rem}
{ We should notice that even though some similarity between the previous expressions, Falconer's singular value function $\varphi^{s}(A)$ is sub-multiplicative whereas the generalized 
singular value function $\psi^{q}(A)$ is neither sub-multiplicative nor supermultiplicative.}    
\end{rem}


\medskip
The main goal of this paper is to contribute to the 
multifractal
formalism  of a wide class of $GL(d, \R)$-valued and H\"older continuous matrix cocycles over $\Sigma.$
In \cite{BV04}, Bonatti and Viana introduced the notion of \textit{typical cocycles} 
and showed that the set of typical cocycles forms an open and dense subset of the space of fiber-bunched H\"older continuous linear cocycles (see Section \ref{prel} for the definitions). 
Given a cocycle $\mathcal A : \Sigma \to GL(d,\mathbb R)$ and  $q\in \mathbb R^d$ we will consider 
the family of 
potentials 
\begin{equation}
    \label{eq:singularvfunctioncocycle}
\log \Psi^q(\mathcal A):=\Big(\, \log \psi^{q}(\mathcal{A}^{n}(\cdot)) \Big)_{n\ge 1},
\end{equation}

which is well defined because $\psi^{q}(\mathcal{A}^n(x))\ge 0$ for every $x\in \Sigma$. 
\color{black}


\smallskip
The \emph{topological pressure} of the generalized singular value potential~\eqref{eq:singularvfunctioncocycle} is defined by 
\begin{equation}\label{eq:defPressure}
P^*( \log \Psi^q(\mathcal{A}) ):=\limsup_{n \to \infty}\frac{1}{n} \log Z_n(q),     \hspace{0.5cm} \forall q \in\R^d,     
\end{equation}
where 
\begin{equation}
\label{eq:defpartitionfunction}
Z_{n}(q):=\sum_{I \in \mathcal{L}_n} \psi^q(\mathcal{A}(I))
\quad \text{and}
\quad
\psi^q(\mathcal{A}(I)):=\max_{x\in [I]} \psi^q(\mathcal{A}^{n}(x)).
    \end{equation}
The function $Z_{n}(q)$ is often referred to as 
the \emph{partition function}. 
In case $\mathcal{A}$ is a typical cocycle one can replace $\limsup$ by a limit in ~\eqref{eq:defPressure} and we denote it by $P(T,\log \Psi^q(\mathcal{A}))$ (cf. Section \ref{sec:proofupperbddthmA1}). 

\smallskip
\subsection{General statements}
The first main result guarantees that the topological entropy of vector-valued Lyapunov exponent spectra 
(recall that $\vec L$ stands for the Lyapunov spectrum of the linear cocycle (cf. ~\eqref{def:Lyapunov-spectrum}), which is closed and convex (cf. \cite{park20}))
can be computed using  a Legendre-type transform 
 of the topological pressure 
of the generalized singular value potential. 
More precisely:

 \begin{maintheorem}\label{thmA}
\emph{Let $\mathcal{A}:\Sigma \to \text{GL}(d,\R)$ be a typical cocycle.  Then, 
for each $\vec{\alpha}\in \mathring{\vec{L}}$, 
  \[
  \begin{aligned}
  h_{\mathrm{top}}(T,E(\vec{\alpha}))
  & = \inf_{q\in \R^d}
  \Big\{\; P \left( T, \log \Psi^{q}(\mathcal{A})  \right)- \langle q, \vec{\alpha} \rangle \; \Big\} \\
  & =
\sup\Big\{ \, h_{\nu}(T)  \colon \nu \in \mathcal M_{inv}(T), \; 
     \chi_i(\nu, {\mathcal A}) =\alpha_i, \forall 1\le i \le d\Big\}.
  \end{aligned}
  \]
  }
  \end{maintheorem}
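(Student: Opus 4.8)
The plan is to establish the two equalities separately, using the quasi-multiplicativity and bounded distortion of typical cocycles (available from Section~\ref{sec:proofupperbddthmA1}) together with the induced dominated subsystems constructed later in the paper. I begin with the variational identity
\[
h_{\mathrm{top}}(T,E(\vec\alpha)) = \sup\big\{ h_\nu(T) \colon \nu\in\mathcal M_{\mathrm{inv}}(T),\ \chi_i(\nu,\mathcal A)=\alpha_i\ \forall i\big\}.
\]
The inequality $\ge$ is a soft consequence of the ergodic decomposition and the variational principle for topological entropy of noncompact invariant sets (recalled in Section~\ref{prel}): if $\nu$ realizes the prescribed exponents, one may assume $\nu$ ergodic, and then by Oseledets' theorem $\nu(E(\vec\alpha))=1$, so $h_\nu(T)\le h_{\mathrm{top}}(T,E(\vec\alpha))$ by the Bowen--type entropy bound; taking the supremum over such $\nu$ gives the bound. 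The reverse inequality $\le$ is the substantive half: given $\vec\alpha\in\mathring{\vec L}$, I must produce ergodic measures whose entropy approaches $h_{\mathrm{top}}(T,E(\vec\alpha))$ and whose Lyapunov vector equals $\vec\alpha$ exactly. This is where the Katok-type induced subsystems enter: one covers (a large entropy portion of) $E(\vec\alpha)$ by orbit segments whose singular values grow at the prescribed rates, builds an induced subshift of finite type on which the induced cocycle is locally constant and dominated, applies the already-established locally constant theory (the results of \cite{Moh22-entropy, Moh23}) to find invariant measures for the induced system with the right induced exponents and near-optimal induced entropy, and finally transfers these back via Abramov's formula for entropy and the ergodic identity relating induced and original Lyapunov exponents.

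For the Legendre-transform identity
\[
h_{\mathrm{top}}(T,E(\vec\alpha)) = \inf_{q\in\R^d}\big\{ P(T,\log\Psi^q(\mathcal A)) - \langle q,\vec\alpha\rangle\big\},
\]
the upper bound $h_{\mathrm{top}}(T,E(\vec\alpha))\le P(T,\log\Psi^q(\mathcal A)) - \langle q,\vec\alpha\rangle$ for every $q$ is the statement whose proof is indicated in Section~\ref{sec:proofupperbddthmA1}: cover $E(\vec\alpha)$ at scale $n$ by cylinders $[I]$ on which $\log\sigma_i(\mathcal A^n(\cdot))\approx n\alpha_i$, weight each by $\psi^q(\mathcal A(I))\approx e^{n\langle q,\vec\alpha\rangle}$, and compare the resulting Carathéodory sum with $Z_n(q)$; the existence of the limit defining $P(T,\log\Psi^q(\mathcal A))$ for typical cocycles (again Section~\ref{sec:proofupperbddthmA1}) makes this clean. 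For the matching lower bound I use convex duality together with the variational identity just proved: by \cite{park20} the function $q\mapsto P(T,\log\Psi^q(\mathcal A))$ is convex and the pressure admits a variational principle of the form $P(T,\log\Psi^q(\mathcal A)) = \sup_{\nu}\{ h_\nu(T) + \sum_i q_i\chi_i(\nu,\mathcal A)\}$ (this is the content of Theorem~\ref{thmD}, which I am entitled to invoke). Since $\vec\alpha$ lies in the interior $\mathring{\vec L}$ of the set of attainable Lyapunov vectors, the sup defining $\inf_q\{P - \langle q,\vec\alpha\rangle\}$ is attained and, by a standard Lagrangian/subdifferential argument on the closed convex domain $\vec L$, equals $\sup\{ h_\nu(T)\colon \chi_i(\nu,\mathcal A)=\alpha_i\ \forall i\}$, which is the middle quantity. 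Composing the two displayed identities closes the circle: the infimum equals the constrained entropy supremum, which equals $h_{\mathrm{top}}(T,E(\vec\alpha))$.

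\textbf{Main obstacle.} The hard part will be the reverse inequality $h_{\mathrm{top}}(T,E(\vec\alpha))\le\sup\{h_\nu(T)\colon\chi_i(\nu,\mathcal A)=\alpha_i\}$, specifically the construction and control of the induced dominated subsystem. Three points require care: (i) ensuring that the induced cocycle over the induced subshift of finite type is genuinely locally constant and dominated, so that the available multifractal results for locally constant cocycles apply with the correct exponents; (ii) controlling the loss of entropy in passing from a Bowen cover of $E(\vec\alpha)$ to the induced system, which is governed by the relation between the induced entropy and $h_{\mathrm{top}}(T,E(\vec\alpha))$ via Abramov's formula and requires the return-time function to have controlled (e.g. integrable, bounded-variation) behavior; and (iii) guaranteeing that the measures obtained for the induced system have Lyapunov vectors returning \emph{exactly} $\vec\alpha$ rather than merely approximately — this hinges on $\vec\alpha\in\mathring{\vec L}$, which provides the room to perturb and to interpolate between nearby ergodic measures, and on the continuity of Lyapunov exponents along the relevant family (fiber-bunching and typicality are used precisely here). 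Once these are in place, the quasi-multiplicativity of $\psi^q(\mathcal A(\cdot))$ ensures the pressure is well-behaved and the Legendre duality goes through without further friction.
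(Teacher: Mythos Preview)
Your outline diverges from the paper's argument in organization, and there is a gap in what you call the ``soft'' inequality. When $\nu\in\mathcal M_{\mathrm{inv}}(T)$ is not ergodic but satisfies $\chi_i(\nu,\mathcal A)=\alpha_i$ for all $i$, its ergodic components $\nu_\omega$ have exponent vectors $(\chi_i(\nu_\omega,\mathcal A))_i$ which in general differ from $\vec\alpha$ (only their $\mathbb P$-average equals $\vec\alpha$); typically $\nu(E(\vec\alpha))=0$, so Bowen's inequality does not yield $h_\nu(T)\le h_{\mathrm{top}}(T,E(\vec\alpha))$. The rest of your plan does not rescue this. Writing $H(\vec\alpha)$ for the supremum over all invariant $\nu$ and $H_{\mathrm{erg}}(\vec\alpha)$ for the supremum over ergodic ones: Fenchel duality from Theorem~\ref{thmD} indeed gives $\inf_q\{P-\langle q,\vec\alpha\rangle\}=H(\vec\alpha)$, and combined with Theorem~\ref{thmA-part1} you obtain $h_{\mathrm{top}}(T,E(\vec\alpha))\le H(\vec\alpha)$; your Katok step, which produces \emph{ergodic} measures, gives $h_{\mathrm{top}}(T,E(\vec\alpha))\le H_{\mathrm{erg}}(\vec\alpha)$, and together with the ergodic Bowen bound this yields $h_{\mathrm{top}}=H_{\mathrm{erg}}$. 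You are left with $H_{\mathrm{erg}}(\vec\alpha)=h_{\mathrm{top}}(T,E(\vec\alpha))\le H(\vec\alpha)=\inf_q\{\cdots\}$, and nothing in the plan forces the last inequality to be an equality.

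The paper closes the circle differently and without invoking Theorem~\ref{thmD}. The induced dominated subsystem $(\mathbb A_n^{\mathbb Z},f,\mathcal B)$ of Section~\ref{dominated_subsystem} is built from \emph{all} length-$n$ orbit segments (independently of $\vec\alpha$; it is not a cover of $E(\vec\alpha)$), and on it the full locally-constant variational principle of \cite{FH,Moh22-entropy} is already available. The transfer uses Proposition~\ref{relation between entropies and LE} to compare entropies and exponents up to factors $n/(n+2K_0)$, and, crucially, Theorem~\ref{continuity_potential} to show $\tfrac1n P_{n,\mathcal D}(\log\Psi^q(\mathcal B))\to P(T,\log\Psi^q(\mathcal A))$ locally uniformly in $q$. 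These ingredients squeeze $\tfrac1n h_{\mathrm{top}}(f,E^{n,\mathcal D}(n\vec\alpha))$ between $h_{\mathrm{top}}\big(T,\bigcup_{\vec\beta\in\Theta_n}E(\vec\beta)\big)$ (with $\Theta_n\downarrow\{\vec\alpha\}$, using Corollary~\ref{proof of the upper bound for vectors close to alpha}) and $\inf_q\{P-\langle q,\vec\alpha\rangle\}$, so that all three quantities in the theorem are obtained equal in one stroke. Abramov's formula plays no role; the uniform bound $m_i\le K_0$ from Theorem~\ref{dominated_typical} replaces any integrability hypothesis on return times.
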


The relative variational principle in Theorem \ref{thmA}  provides an affirmative answer to a problem raised by Breuillard and Sert  \cite[Problem (7)]{BS21}. 

\smallskip
A different problem would be to obtain a variational principle for the topological pressure function $\log \Psi^q(\mathcal{A})$ involving the space of $T$-invariant probability measures. 
Variational principles for 
sub-additive and for super-additive potentials have been obtained in \cite{CFH,CPZ}. However, $\log \Psi^q(\mathcal{A})$ is neither sub-additive nor super-additive. In case $\A$ is a typical cocycle and $\mu \in \M(T)$, $\lim_{n\to \infty} \frac{1}{n} \int \log \psi^{q}(\A^n(x)) d\mu(x)$ exists (cf. Remark \eqref{LE exists}). In the following theorem, we overcome the lack of such additive properties for $\log \Psi^q(\mathcal{A})$ and establish the following variational principle.

 \begin{maintheorem}\label{thmD}
\emph{Let $\A:\Sigma \to \glr$ be a typical cocycle.  Then,  the following variational principle  holds: for any $q \in \R^d$,
\[ 
P(T,\log \Psi^{q}(\A) )=\sup\bigg\{ h_{\mu}(T)+\lim_{n\to \infty} \frac{1}{n} \int \log \psi^{q}(\A^n(x)) \,d\mu(x): \mu \in \M(T) \bigg\}. 
\]
}
 \end{maintheorem}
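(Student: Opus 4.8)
The plan is to deduce the variational principle for $P(T,\log\Psi^q(\A))$ from the already-established relative variational principle of Theorem~\ref{thmA}, together with the quasi-multiplicativity and bounded distortion properties of typical cocycles. The starting observation is that, for a typical cocycle, $P(T,\log\Psi^q(\A))$ is a genuine limit (not just a $\limsup$), and the partition sums $Z_n(q)$ are comparable, up to a multiplicative constant coming from quasi-multiplicativity and bounded distortion, to sums over level sets. First I would show the ``$\ge$'' inequality: given $\mu\in\M(T)$, set $\alpha_i=\chi_i(\mu,\A)$; since $\mu(E(\vec\alpha))=1$ whenever $\mu$ is ergodic (by Oseledets and the ergodic theorem applied to the subadditive sequences $\log\|\A^{\wedge i}\|$), the entropy distribution principle / the definition of topological entropy of noncompact sets gives $h_\mu(T)\le h_{\mathrm{top}}(T,E(\vec\alpha))$ for ergodic $\mu$, hence by Theorem~\ref{thmA}
\[
h_\mu(T) \le P\big(T,\log\Psi^q(\A)\big) - \langle q,\vec\alpha\rangle
= P\big(T,\log\Psi^q(\A)\big) - \lim_{n\to\infty}\tfrac1n\int \log\psi^q(\A^n)\,d\mu,
\]
using that $\lim_n\tfrac1n\int\log\psi^q(\A^n)\,d\mu = \sum_i (q_i-q_{i+1})\lim_n\tfrac1n\int\log\|\A^{\wedge i,n}\| = \langle q,\vec\alpha\rangle$ by the subadditive ergodic theorem. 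A standard ergodic decomposition argument (the entropy and the Lyapunov integrals are both affine in $\mu$, and the limit can be taken inside by dominated convergence using the uniform $L^1$ bounds from fiber-bunching) extends this to all $\mu\in\M(T)$, giving the supremum bound.

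For the reverse inequality ``$\le$'', the plan is to exhibit, for each $\vec\alpha\in\mathring{\vec L}$, invariant measures whose entropy plus Lyapunov term approaches $P(T,\log\Psi^q(\A))$ along a well-chosen $q$. The cleanest route is: fix $q\in\R^d$ and let $\vec\alpha$ be a point in $\mathring{\vec L}$ realizing the infimum in the Legendre transform of Theorem~\ref{thmA} at $q$ — more precisely, use that the map $q\mapsto P(T,\log\Psi^q(\A))$ is convex and (by the bounded distortion and quasi-multiplicativity established in Section~\ref{sec:proofupperbddthmA1}) finite and continuous on $\R^d$, so that by convex duality there is $\vec\alpha$ with $P(T,\log\Psi^q(\A)) = h_{\mathrm{top}}(T,E(\vec\alpha)) + \langle q,\vec\alpha\rangle$ (this is the equality case in the Legendre inversion, valid since $\vec L$ is closed and convex and $\vec\alpha$ can be taken in the interior by choosing $q$ in the domain where the subdifferential meets $\mathring{\vec L}$, after an approximation if $\vec\alpha\in\partial\vec L$). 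Then the second line of Theorem~\ref{thmA} furnishes ergodic measures $\nu_k$ with $\chi_i(\nu_k,\A)=\alpha_i$ and $h_{\nu_k}(T)\to h_{\mathrm{top}}(T,E(\vec\alpha))$, and for these
\[
h_{\nu_k}(T) + \lim_{n\to\infty}\tfrac1n\int\log\psi^q(\A^n)\,d\nu_k \;=\; h_{\nu_k}(T) + \langle q,\vec\alpha\rangle \;\longrightarrow\; P\big(T,\log\Psi^q(\A)\big),
\]
which is the desired lower bound for the supremum. Boundary values of $q$ and the case $\vec\alpha\in\partial\vec L$ are handled by a continuity/approximation argument, using continuity of $q\mapsto P(T,\log\Psi^q(\A))$ and upper semicontinuity of the entropy term along suitable sequences.

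I expect the main obstacle to be the passage from ergodic measures to general invariant measures in the ``$\ge$'' direction combined with the interchange of the limit $\lim_n\tfrac1n\int\log\psi^q(\A^n)\,d\mu$ with the ergodic decomposition: one must know this limit exists for every $\mu\in\M(T)$ (this is Remark~\ref{LE exists} / the subadditive ergodic theorem applied coordinatewise to $\log\|\A^{\wedge i}\|$) and that it is affine in $\mu$ with the right integrability to justify dominated convergence — fiber-bunching gives the uniform bounds $\|\log\psi^q(\A^n)\|_\infty = O(n)$ needed here. The secondary technical point is ensuring that in the Legendre inversion one genuinely lands in $\mathring{\vec L}$ so that Theorem~\ref{thmA} applies; this requires using that $q\mapsto P(T,\log\Psi^q(\A))$ is finite on all of $\R^d$ (hence its convex conjugate has full-dimensional domain equal to $\vec L$, up to closure), which in turn rests on the quasi-multiplicativity of typical cocycles proved earlier. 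Once these are in place, the argument is essentially a duality bookkeeping between the two formulas in Theorem~\ref{thmA}.
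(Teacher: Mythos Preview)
Your route is genuinely different from the paper's. The paper does not deduce Theorem~\ref{thmD} from Theorem~\ref{thmA}: it proves the inequality $\sup_\mu\{\cdots\}\le P(T,\log\Psi^q(\A))$ by the elementary two-line bound $\sum_I \mu([I])\big(\log\psi^q(\A(I))-\log\mu([I])\big)\le \log\sum_I \psi^q(\A(I))$ (Lemma~\ref{upper-bound of var-pri}), and proves the reverse inequality by taking, for each $n$, the unique almost-additive equilibrium state $\mu_n^q$ for $\log\Psi^q(\B_{\mathbb A})$ on the dominated subsystem $(\mathbb A_n^{\Z},f)$, pushing it to a $T$-invariant measure $\nu_n^q$ via Proposition~\ref{relation between entropies and LE}, and letting $n\to\infty$ using the pressure approximation Theorem~\ref{continuity_potential}. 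In particular the paper's lower bound re-uses the dominated-subsystem machinery rather than any convex duality with Theorem~\ref{thmA}.

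Your approach has a real gap. Both halves of your argument go through Theorem~\ref{thmA}, which is stated only for $\vec\alpha\in\mathring{\vec L}$. But typicality constrains $\A^{\wedge t}$ only for $1\le t\le d-1$; nothing prevents $\det\A$ from being constant (e.g.\ $\A$ taking values in $SL(d,\R)$), in which case every $\vec\alpha\in\vec L$ satisfies $\sum_i\alpha_i=\text{const}$, so $\mathring{\vec L}=\emptyset$ and Theorem~\ref{thmA} is vacuous---yet Theorem~\ref{thmD} must still hold. Your assertion that finiteness of $q\mapsto P(T,\log\Psi^q(\A))$ on all of $\R^d$ forces the conjugate to have ``full-dimensional domain equal to $\vec L$'' is not correct: finiteness gives boundedness of $\operatorname{dom}P^*$, not full-dimensionality. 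Even when $\mathring{\vec L}\neq\emptyset$, your Legendre step ``there is $\vec\alpha$ with $P(q)=h_{\mathrm{top}}(T,E(\vec\alpha))+\langle q,\vec\alpha\rangle$'' needs a subgradient $\vec\alpha\in\partial P(q)$ lying in $\mathring{\vec L}$; for $q$ where $\partial P(q)$ meets only $\partial\vec L$ you would need an additional radial-limit argument (concavity of $-P^*$ plus lower semicontinuity of $P^*$) that you defer but do not carry out. For the ``$\ge$'' direction, note also that your route through $h_\mu(T)\le h_{\mathrm{top}}(T,E(\vec\alpha))$ and ergodic decomposition is strictly more involved than the paper's direct partition-sum inequality, which needs neither level sets nor Theorem~\ref{thmA}.
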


 \smallskip
\subsection{Applications to smooth dynamical systems}

The previous results have a wide range of applications. In what follows we shall focus on the multifractal analysis for Anosov diffeomorphisms.  Recall that  $f$ is a $C^{1}$ Anosov diffeomorphism of a closed manifold $M$ if there exist a $D f$-invariant splitting $T M=E^s \oplus E^u$ and constants $C>0, \nu \in(0,1)$ such that 
$$
\left\|\left.D_x f^n\right|_{E_x^s}\right\| \leq C \nu^n \text { and }\left\|\left.D_x f^{-n}\right|_{E^u_x}\right\| \leq C \nu^n 
$$
for every $x\in M$ and $n \in \mathbb{N}$.
 The restrictions of the derivative cocycle of a $C^{1+\alpha}$ Anosov diffeomorphism to the stable and unstable subbundles can be modeled by a H\"older continuous cocycle over a subshift of finite type. Indeed,
for a $C^{1+\alpha}$ Anosov diffeomorphism $f$, the unstable and stable bundles $E^s$ and $E^u$ are $\beta$-H\"older continuous, for some $\beta \in(0, \alpha]$ (cf. \cite{HP}).
%
Denoting the dimension of the unstable bundle $E^u$ by $d$, one can realize $\left.D f\right|_{E^u}$ as a 
$\text{GL}(d,\R)$-cocycle over a suitable subshift of finite type $\left(\Sigma, \sigma\right)$. Indeed, the existence of a Markov partition for $f$ \cite{Bow} results in a H\"older continuous surjection $\pi: \Sigma \rightarrow M$ such that $f \circ \pi=\pi \circ \sigma$. By choosing a Markov partition of sufficiently small diameter, one  may assume that the image of each cylinder $[j]$ of $\Sigma, 1 \leq j \leq q$, is contained in an open set on which $E^u$ is trivializable. For $x \in[j]$, we let $L_j(x)$ : $\mathbb{R}^d \rightarrow E_{\pi (x)}^u$ be a fixed trivialization of $E^u$ over $\pi([j])$. 
We define the $\alpha$-H\"older $\text{GL}(d,\R)$-cocycle 
$\mathcal{A}$ over the subshift $\left(\Sigma, \sigma\right)$ by
\begin{equation}\label{definition of cocycle for Ansosov}
    \mathcal{A}(x):=\left.L_k(\sigma (x))^{-1} \circ D_{\pi (x)} f\right|_{E^u} \circ L_j(x),
    \text{whenever $\sigma (x) \in [k]$.}
\end{equation}
We will say the derivative cocycle $\left.D f\right|_{E^u}$ is \emph{fiber-bunched} if there exists $N \in \mathbb{N}$ such that
\begin{equation}
\label{eq:bunchingDF}
\left\|\left.D_x f^N\right|_{E_x^u}\right\| \cdot\left\|\left(\left.D_x f^N\right|_{E_x^u}\right)^{-1}\right\| \cdot \max \left\{\left\|\left.D_x f^N\right|_{E_x^s}\right\|^\beta,\left\|\left(\left.D_x f^N\right|_{E_x^u}\right)^{-1}\right\|^\beta \right\}<1 .
\end{equation}
When $\left.D f\right|_{E^u}$ is fiber-bunched, the canonical stable and unstable holonomies for the cocycle $\left.D f\right|_{E^u}$ converge and are $\beta$-H\"older continuous (see Subsection~\ref{subsec:fiberbunching} for the definition of holonomy maps). 

\begin{maintheorem}\label{thmE}
 \emph{Let $f$ be a transitive $C^{1+\alpha}$ Anosov diffeomorphism of a closed manifold $M$ such that $\left.D f\right|_{E^u}$ is fiber-bunched.  If $\left.D f\right|_{E^u}$ is a typical cocycle then $$\begin{aligned}
h_{\text{top}}(E(\vec{\alpha}))
&= \inf_{q \in \R^d}\bigg\{P(f,\log \Psi^{q}(\left.D f\right|_{E^u}) )- \langle \vec{\alpha}, q \rangle \bigg\}\\
&=\sup \bigg\{h_{\mu}(f): \mu \in \mathcal M_{\text{inv}}(f), \; \chi_{i}(\mu, \left.D f\right|_{E^u})=\alpha_i \text{ for }1\le i \le d \bigg\}.
\end{aligned}
$$
for every $\vec{\alpha} \in \mathring{L}$.
}
\end{maintheorem}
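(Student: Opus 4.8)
\textbf{Proof strategy for Theorem~\ref{thmE}.} The plan is to reduce the statement to the symbolic setting of Theorem~\ref{thmA} by transferring the multifractal data through the Markov coding $\pi\colon\Sigma\to M$. First I would recall the semiconjugacy $f\circ\pi=\pi\circ\sigma$ coming from a Markov partition of small diameter, so that the cocycle $\mathcal A$ defined in \eqref{definition of cocycle for Ansosov} is an $\alpha$-H\"older $\mathrm{GL}(d,\R)$-cocycle over $(\Sigma,\sigma)$ that is conjugate, along orbits, to $\left.Df\right|_{E^u}$ via the trivializations $L_j$. Because $L_j$ are bounded with bounded inverses on each cylinder, the products $\mathcal A^n(x)$ and $\left.D_{\pi(x)}f^n\right|_{E^u}$ differ only by the fixed boundary trivializations $L_k(\sigma^n x)^{-1}$ and $L_j(x)$, hence
\[
\Big|\tfrac1n\log\sigma_i(\mathcal A^n(x))-\tfrac1n\log\sigma_i\big(\left.D_{\pi(x)}f^n\right|_{E^u}\big)\Big|\xrightarrow[n\to\infty]{}0
\]
uniformly. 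Consequently the Lyapunov exponents of $\left.Df\right|_{E^u}$ at $\pi(x)$ coincide with those of $\mathcal A$ at $x$, the level set $E(\vec\alpha)\subset M$ is the $\pi$-image of the symbolic level set $E_{\mathcal A}(\vec\alpha)\subset\Sigma$, and the pressure $P(f,\log\Psi^q(\left.Df\right|_{E^u}))$ equals $P(\sigma,\log\Psi^q(\mathcal A))$ since the partition functions $Z_n(q)$ agree up to a multiplicative constant uniform in $n$. The fiber-bunching hypothesis \eqref{eq:bunchingDF} is exactly what guarantees that $\mathcal A$ is a fiber-bunched cocycle in the sense of Subsection~\ref{subsec:fiberbunching}, and typicality of $\left.Df\right|_{E^u}$ is transported to typicality of $\mathcal A$ by definition; so Theorem~\ref{thmA} applies to $\mathcal A$.

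The second step is to push the three quantities in Theorem~\ref{thmA} back to $M$. The Legendre-type middle term is immediate from $P(\sigma,\log\Psi^q(\mathcal A))=P(f,\log\Psi^q(\left.Df\right|_{E^u}))$. For the entropy equalities I would invoke the standard fact that a H\"older Markov coding $\pi$ is finite-to-one and is a bijection off a set that carries no entropy: more precisely $\pi$ restricts to a bijection between $\Sigma$ and $M$ outside the (countable union of lower-dimensional, hence zero-entropy) images of cylinder boundaries. This yields $h_{\mathrm{top}}(f,E(\vec\alpha))=h_{\mathrm{top}}(\sigma,E_{\mathcal A}(\vec\alpha))$ for the topological entropy of the (non-compact, invariant) level sets, using the behaviour of topological entropy under finite-to-one factor maps. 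On the measure side, $\pi_*$ is an affine surjection $\mathcal M_{\mathrm{inv}}(\sigma)\to\mathcal M_{\mathrm{inv}}(f)$ which preserves entropy ($h_{\nu}(\sigma)=h_{\pi_*\nu}(f)$, again because $\pi$ is a principal, i.e. entropy-preserving, finite extension) and preserves the integrated Lyapunov exponents $\chi_i(\nu,\mathcal A)=\chi_i(\pi_*\nu,\left.Df\right|_{E^u})$ by the uniform convergence above together with dominated convergence. Hence the supremum of $h_\nu(\sigma)$ over symbolic measures with prescribed $\chi_i$ equals the supremum of $h_\mu(f)$ over $f$-invariant measures with the same prescribed exponents. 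Finally I would identify the parameter domains: the symbolic Lyapunov spectrum $\vec L$ of $\mathcal A$ coincides with the set $\Omega$ of realizable exponent vectors for $\left.Df\right|_{E^u}$, so $\mathring{\vec L}=\mathrm{ri}(\Omega)$ and the range of validity matches.

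The main obstacle is the delicate matching of \emph{topological} entropies of the non-compact level sets under $\pi$, since the coding map is only finite-to-one and the level sets need not be compact; one must check carefully that the countable union of boundary-orbit sets is genuinely negligible for $h_{\mathrm{top}}(\cdot,\cdot)$ (it has zero topological entropy as a countable union of proper subsystems, and $h_{\mathrm{top}}$ of a countable union is the supremum of the pieces), and that the Bowen/Pesin-Pitskel entropy is well behaved under the finite-to-one restriction of $\pi$ to the complement. A secondary technical point is verifying that fiber-bunching \eqref{eq:bunchingDF} for $\left.Df\right|_{E^u}$ really implies the abstract fiber-bunching inequality for $\mathcal A$ over $\Sigma$ at the level of a suitable iterate; this is where the H\"older exponent $\beta$ of the stable/unstable bundles and the choice of a fine Markov partition enter, but it is essentially bookkeeping once the comparison of norms through the $L_j$ is in place. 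Everything else is a routine transfer of Theorem~\ref{thmA} along the coding.
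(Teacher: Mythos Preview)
Your proposal is correct and follows essentially the same approach as the paper: reduce to Theorem~\ref{thmA} via the Markov coding $\pi:\Sigma\to M$ and the cocycle $\mathcal A$ of \eqref{definition of cocycle for Ansosov}, transferring typicality and fiber-bunching from $Df|_{E^u}$ to $\mathcal A$. The paper's own proof is in fact far terser than yours---it simply observes that typicality of $Df|_{E^u}$ forces typicality of (a power of) $\mathcal A$ and then invokes Theorem~\ref{thmA}, leaving the matching of level-set entropies, pressures and measure-theoretic data across $\pi$ that you carefully outline as implicit.
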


Some comments are in order. Both the bunching and typicality requirements in Theorem~\ref{thmE} are $C^1$-open conditions. Moreover, as the bunching condition ~\eqref{eq:bunchingDF} is satisfied by a $C^1$-open set of $C^{1+\alpha}$-diffeomorphisms sufficiently $C^1$-close to a $C^{1+\alpha}$ Anosov diffeomorphism whose unstable direction is conformal, there exist $C^1$-open sets of $C^{1+\alpha}$ Anosov diffeomorphisms satisfying the requirements of the theorem.

 \medskip
 Our main results also find applications to the 
 Lyapunov multifractal formalism of expanding repellers. 
Given a $d$-dimensional Riemannian manifold $M$ and a $C^1$ map $h$ on $M$, a compact $h$-invariant subset $\Lambda \subset M$ is a \emph{repeller} if
 \begin{enumerate}
 \item 
 there exists an adapted norm $\|\cdot\|$ and $\lambda>1$ such that
$$
\left\|D_x h(v)\right\| \geq \lambda\|v\|\qquad 
\text{for all $x \in \Lambda$ and $v \in T_x M$,}
$$
\item  there exists a bounded open neighborhood $V$ of $\Lambda$ such that
$$
\Lambda=\Big\{x \in V: h^n x \in V \text { for all } n \geq 0\Big\} .
$$
 \end{enumerate}
 Given such a repeller $\Lambda$ and $\alpha \in(0,1]$, we say that $\left.h\right|_{\Lambda}$ is \emph{$\alpha$-bunched} if
$$
\left\|\left(D_x h\right)^{-1}\right\|^{1+\alpha} \cdot\left\|D_x h\right\|<1
\qquad 
\text{for all $x \in \Lambda$.}
$$

\begin{rem}
A natural class of $\alpha$-bunched repellers are small perturbations of conformal repellers. 
The 1-bunching assumption for repellers was studied first in \cite{falconer94}.    
\end{rem}

The next theorem provides a variational principle for the entropy of the Lyapunov spectrum level sets of most $\alpha$-bunched repellers.  More precisely:

\begin{maintheorem}\label{thmF}
 \emph{Let $M$ be a Riemannian manifold, $r>1$ and $h: M \rightarrow M$ be a $C^r$ map. Suppose $\Lambda \subset M$ is an $\alpha$-bunched repeller defined by $h$ for some $\alpha \in(0,1)$ with $r-1>\alpha$.  Then there exist a $C^1$-neighborhood $\mathcal{V}_1$ of $h$ in $C^r(M, M)$ and a $C^1$-open and $C^r$-dense subset $\mathcal{V}_2$ of $\mathcal{V}_1$ such that  for every $g \in \mathcal{V}_2$ and $\vec{\alpha} \in \mathring{\vec{L}}$,
 $$
 \begin{aligned}
h_{\text{top}}(E(\vec{\alpha}))
&= \inf_{q \in \R^d}\bigg\{P(g,\log \Psi^{q}(\left.D g\right|_{\Lambda_g}) )- \langle \vec{\alpha}, q \rangle \bigg\}\\
&=\sup \bigg\{h_{\mu}(g): \mu \in \M(g), \chi_{i}(\mu, \left.D g\right|_{\Lambda_g})=\alpha_i \text{ for }1\le i \le d \bigg\}.
\end{aligned}
$$
}
 \end{maintheorem}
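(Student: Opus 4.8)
\textbf{Proof proposal for Theorem~\ref{thmF}.}

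The plan is to reduce Theorem~\ref{thmF} to Theorem~\ref{thmA} (or rather its dynamical counterpart, Theorem~\ref{thmE}, whose proof strategy for repellers is essentially the same) by symbolically coding the repeller and transferring the bunching and typicality hypotheses to the associated linear cocycle. First I would fix the $\alpha$-bunched repeller $\Lambda$ for $h$ and recall that $h|_\Lambda$ is a transitive (or, if not, restrict to a transitive basic piece) uniformly expanding map, so by Ruelle--Bowen it admits a Markov partition; this yields a one-sided subshift of finite type $(\Sigma^+,\sigma)$ and a Hölder continuous surjection $\pi:\Sigma^+\to\Lambda$ with $h\circ\pi=\pi\circ\sigma$. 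Choosing the Markov partition with sufficiently small diameter, each cylinder $[j]$ maps into a trivializing chart for $TM$ over a neighborhood of $\Lambda$, and one defines the $\beta$-Hölder $\mathrm{GL}(d,\R)$-cocycle $\mathcal A$ over $\Sigma^+$ exactly as in~\eqref{definition of cocycle for Ansosov}, namely $\mathcal A(x)=L_k(\sigma x)^{-1}\circ D_{\pi(x)}h\circ L_j(x)$ when $\sigma x\in[k]$. Under $\pi$, the singular values of $(Dh)^n$ along orbits in $\Lambda$ coincide (up to the bounded factors coming from the trivializations $L_j$, which do not affect exponential growth rates) with those of $\mathcal A^n$, so the Lyapunov level sets $E(\vec\alpha)$ for $h|_\Lambda$ and for $\mathcal A$ are identified up to the finite-to-one, entropy-preserving coding; likewise $h_\mu(g)=h_{\mu\circ\pi^{-1}}(\sigma)$ and the Lyapunov exponents $\chi_i$ match. (Technical point: $\pi$ is finite-to-one and the set where it fails to be injective has zero measure for every ergodic measure with positive entropy, so topological entropy of level sets and the variational quantities transfer; this is standard and I would cite the relevant lemma from Section~\ref{prel}.)

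Next I would verify that the cocycle $\mathcal A$ built from $Dg|_{\Lambda_g}$ is fiber-bunched and that typicality is a $C^1$-open, $C^r$-dense condition, which is exactly what produces the neighborhoods $\mathcal V_1\supset\mathcal V_2$. The $\alpha$-bunching inequality $\|(D_xh)^{-1}\|^{1+\alpha}\cdot\|D_xh\|<1$ for all $x\in\Lambda$ translates, after passing to a high iterate $N$ and using that $E^s$ is absent (the repeller case is "all unstable"), into the fiber-bunching estimate for $\mathcal A$ in the sense of Subsection~\ref{subsec:fiberbunching}; since $r-1>\alpha$ the bundles and cocycle are genuinely $\beta$-Hölder with $\beta\le\alpha$ close enough to $\alpha$, and the inequality, being strict and $C^1$-open in $h$, persists on a $C^1$-neighborhood $\mathcal V_1$. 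For typicality, I would invoke Bonatti--Viana \cite{BV} and Avila--Viana \cite{AV07-acta}: the set of typical cocycles is open and dense (indeed residual) among $\beta$-Hölder fiber-bunched cocycles, and — this is the point requiring a small argument — perturbing $g$ in $C^r$ within $\mathcal V_1$ one can realize arbitrary small Hölder perturbations of $\mathcal A$ (one perturbs $h$ locally near periodic points to adjust the derivative cocycle), so typicality of $Dg|_{\Lambda_g}$ holds on a $C^1$-open, $C^r$-dense subset $\mathcal V_2\subset\mathcal V_1$. I would also note that $\Omega$ here is the Lyapunov spectrum of $Dg|_{\Lambda_g}$, closed and convex by \cite{park20}, and that $ri(\Omega)$ corresponds to $\mathring{\vec L}$ for the coded cocycle.

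With these identifications in place, Theorem~\ref{thmA} applied to the typical fiber-bunched cocycle $\mathcal A$ over $(\Sigma^+,\sigma)$ gives
\[
h_{\mathrm{top}}(\sigma,E(\vec\alpha))
=\inf_{q\in\R^d}\Big\{P\big(\sigma,\log\Psi^{q}(\mathcal A)\big)-\langle q,\vec\alpha\rangle\Big\}
=\sup\big\{h_\nu(\sigma):\nu\in\M(\sigma),\ \chi_i(\nu,\mathcal A)=\alpha_i\big\}
\]
for $\vec\alpha\in ri(\Omega)$, and pushing forward through $\pi$ yields the stated formula for $g|_{\Lambda_g}$, the equality $P(g,\log\Psi^q(Dg|_{\Lambda_g}))=P(\sigma,\log\Psi^q(\mathcal A))$ following from the fact that the partition functions $Z_n(q)$ in~\eqref{eq:defpartitionfunction} differ only by bounded multiplicative constants under the coding and the bounded-distortion property of $\psi^q$ (Remark~\ref{bdd distortion-remark}). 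The main obstacle I anticipate is not any single deep step but the careful bookkeeping in the two transfer arguments: (i) ensuring that the non-injectivity locus of the Markov coding is negligible for all the measures and level sets involved, so that \emph{topological} entropy of the (non-compact, non-invariant) level sets really is preserved — one should invoke the properties of topological entropy of non-compact sets recalled in Section~\ref{prel} rather than a naive pushforward; and (ii) the perturbation argument showing $C^r$-density of typicality within the $C^1$-open bunched class, where one must check that local $C^r$ perturbations of $h$ near a finite set of periodic orbits suffice to break the (infinite-codimension) non-typicality conditions while staying inside $\mathcal V_1$ and keeping $\Lambda$ a repeller (structural stability of the repeller handles the latter). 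Everything else is a routine translation of the Anosov case treated for Theorem~\ref{thmE}.
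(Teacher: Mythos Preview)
Your reduction has a genuine gap: you apply Theorem~\ref{thmA} to the cocycle $\mathcal A$ over the \emph{one-sided} shift $(\Sigma^+,\sigma)$, but Theorem~\ref{thmA}, the notion of typicality in Definition~\ref{typical1}, and the whole machinery of Sections~\ref{sec:linearcocycles}--\ref{dominated_subsystem} are set up for \emph{two-sided} subshifts. Typicality requires both stable and unstable canonical holonomies and a homoclinic loop $\widetilde H_p^z = H^s_{p\leftarrow z}\circ H^u_{z\leftarrow p}$; on a one-sided shift there is no local unstable set and hence no $H^u$, so Definition~\ref{typical1} does not even make sense for your $\mathcal A$. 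The paper's route is precisely to circumvent this: one passes to the natural extension $(\Sigma,f)$ of $(\Sigma^+,f^+)$ and defines a cocycle $\mathcal C$ over the \emph{inverse} system $(\Sigma,f^{-1})$ by $\mathcal C(x)=L(\pi f^{-1}x)^{-1}\circ (D_{\chi(\pi f^{-1}x)}h)^{-1}\circ L(\pi x)$. Over this two-sided base both holonomies exist, the $\alpha$-bunching inequality $\|(D_xh)^{-1}\|^{1+\alpha}\|D_xh\|<1$ becomes fiber-bunching for $\mathcal C$, and the ``homoclinic loop'' for the repeller (a pre-orbit $\{z_n\}$ of $p_0$ converging back to $p_0$) is exactly what yields the twisting map $\widetilde H^{s,-}_{p\leftarrow z}$ after conjugation by the trivializations. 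One then checks that typicality of the repeller implies typicality of $\mathcal C$ in the sense of Definition~\ref{typical1}, and applies Theorem~\ref{thmA} to $\mathcal C$; the transfer of entropy, pressure and Lyapunov exponents back to $(\Lambda_g,g)$ goes through $\chi_*$ as you describe.

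A second, smaller point: your density argument (perturb $g$ in $C^r$ near periodic orbits to realize small H\"older perturbations of the cocycle and then invoke \cite{BV,AV07-acta}) is correct in spirit but is not carried out in the paper; instead the paper quotes \cite[Lemma~5.10]{park20} directly to obtain the $C^1$-open, $C^r$-dense set $\mathcal V_2\subset\mathcal V_1$ of maps whose repeller is typical. Your sketch of this step would need the additional observation that the relevant holonomy loop for the repeller is built from a backward orbit (hence the need for the inverse cocycle above), which is what the paper's intrinsic definition of a ``typical repeller'' encodes.
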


We observe that the open set $\mathcal V_2$ in the previous theorem is formed by maps that generate typical repellers, whose definition will be given in Section~\ref{sec:proofapp}.

\section{Preliminaries}\label{prel}

In this section, we shall introduce some terminology and recall some basic concepts concerning subshifts of finite type, exterior powers, topological pressure and capacity of sets. The reader familiar with these topics may choose to skip this section, returning to it if necessary.

\subsection{Subshifts of finite type} 
Assume that $Q\in \mathcal M_{k\times k}(\{0,1\})$
is a transition matrix and $(\Sigma_Q,T)$ is the corresponding subshift of finite type.
It is well known that the primitivity of $Q$ (i.e. the existence of an integer $n\ge 1$ such that all the entries of $Q^n$ are positive) is equivalent to
the property of the subshift of finite type $(\Sigma_Q, T)$ to be \textit{topologically mixing}.
%
%
%
%
%
Endow the space $\Sigma=\Sigma_Q$ with the following metric $d$: for $x=(x_{i})_{i\in \Z}, y=(y_{i})_{i\in \Z} \in \Sigma$
\begin{equation}\label{metric}
d(x,y)= 2^{-\inf{\{k\ge 0} \colon \text{$x_{i}\neq y_{i}$ for some $|i| \le k$} \}},
\end{equation} 
The \textit{local stable set} at $x=(x_{i})_{i\in \Z}$ is the set
$
W_{\loc}^{s}(x)=\{(y_{n})_{n\in \Z} : y_{n}=x_{n} \hspace{0,2cm}\textrm{for all}\hspace{0.2cm} n\geq 0\} 
$
while the \textit{local unstable set}
at $x=(x_{i})_{i\in \Z}$ is defined by
$ W_{\loc}^{u}(x)=\{(y_{n})_{n\in \Z} : y_{n}=x_{n} \hspace{0,2cm}\textrm{for all}\hspace{0.2cm} n \leq 0\} .$ 
The global stable (resp. global unstable)  set of $x \in \Sigma$ is 
\[W^{s}(x):=\left\{y \in \Sigma: T^{n} y \in  W_{\loc}^{s}(T^{n}(x))\text { for some } n \geq 0\right\},\]
\[
(\text{resp.} \quad W^{u}(x):=\left\{y \in \Sigma: T^{n} y \in W_{\loc}^{u}(T^{n}(x)) \text { for some } n \leq 0\right\}).
\]
The two-side subshift of finite type  $(\Sigma,T)$ 
equipped with the metric $d$ in \eqref{metric}
is a hyperbolic homeomorphism (see \cite[Subsection 2.3]{AV10} for more details) and, in particular, it has a local product structure defined by
\begin{equation}
    \label{eq:prodst}
[x, y]:=W_{\loc}^{u}(x) \cap W_{\loc}^{s}(y)
\end{equation}
for any $x=(x_{i})_{i\in \Z}, y=(y_{i})_{i\in \Z} \in \Sigma$ so that $x_{0}=y_{0}$. 

 \subsection{Multilinear algebra}\label{wedge_product}
Given $A \in \glr$ and an integer $1\le l \le d$ 
recall that the \emph{$l^{th}$ exterior power of $A$} is the invertible linear map $A^{\wedge l} : \land^{l} \R^{d} \rightarrow \land^{l} \R^{d}$ satisfying
\[ 
A^{\wedge l}(e_{i_{1}}\wedge e_{i_{2}} \wedge ... \wedge e_{i_{l}})
= Ae_{i_{1}}\wedge Ae_{i_{2}} \wedge ... \wedge Ae_{i_{l}},
\]
where $e_i$'s are the standard vectors in the classical orthonormal basis of $\R^d.$
The linear map $A^{\wedge l}$ can be represented by a $\binom dl \times \binom dl$
real valued matrix whose entries are the $l \times l$ minors of $A$ (see e.g. \cite{HK}). 
In particular, for each $1\le l \le d$, the matrix $A^{\wedge l}$ has 
\begin{equation}
\label{eq:dimwed}
d_\ell:= \binom dl    
\end{equation} 
\color{black}
generalized eigenvalues.
It can also be shown that 
\begin{equation}\label{eq:norms.singular.values}
(AB)^{\wedge l}=A^{\wedge l} B^{\wedge l} \quad \text{and } \quad \|A^{\wedge l}\|=\sigma_{1}(A) \dots \sigma_{l}(A),  
\end{equation}
where $\sigma_{1},...,\sigma_{d}$ are the singular values of the matrix $A$. 
The right hand-side in ~\eqref{eq:norms.singular.values}
is extremely useful and allows to relate the potential $\log \Psi^q(\mathcal A(\cdot))$ with singular values and products of norms of exterior powers.

\subsection{Sequences of potentials}

Let $T: X \to X$ be a continuous map an let $\Phi=\{ \phi_n\}_{n\ge 1}$  be a sequence of continuous potentials $\phi_n : X \to \mathbb R$. The sequence $\Phi$ is \emph{additive} (with respect to $T$) if 
$\phi_{n+m}(x)=\phi_{n}(x) +\phi_{m}(T^{n}(x))$ for every $m,n\ge 1$ and every $x\in X$. If this is the case, a simple computation shows that 
$\phi_n=\sum_{j=1}^n \phi_1\circ T^j$
for every $n\ge 1$.
The sequence of potentials $\Phi=\{ \phi_n\}_{n\ge 1}$ is \textit{sub-additive} (with respect to $T$) if 
\[ \phi_{n+m}(x) \leq \phi_{n}(x) +\phi_{m}(T^{n}(x)) \quad \forall x\in X, \, \forall m,n \ge 1,\] 
and it is called
\textit{super-additive} if $-\Phi=\{ -\phi_n\}_{n\ge 1}$ is sub-additive.

Finally, the sequence $\Phi=\{\phi_{n}\}_{n\ge 1}$ is said to be an \textit{almost additive} (with respect to $T$) if there exists a constant $C > 0$ such that for any $m,n \ge 1$, $x\in X$, we have
\[
\phi_{n}(x)+\phi_{m}(T^{n})(x) -C \leq \phi_{n+m}(x)\leq \phi_{n}(x) +\phi_{m}(T^{n}(x))+C.
\]

\color{black}

\subsection{Topological pressure and entropy}

In the context of matrix cocycles it appears naturally some sequences of sub-additive real valued cocycles. In what follows we recall some notions from the thermodynamic formalism of non-additive sequences of potentials, including notions of entropy and pressure using Caratheodory structures.

\subsubsection{Topological pressure}

Let $(X, d)$ be a compact metric space and $T:X\rightarrow X$ be a continuous map. For any $n\in \N$, we define a metric $d_{n}$ on $X$ by
\begin{equation}\label{new_metric}
 d_{n}(x, y)=\max\Big\{\, d(T^{k}(x), T^{k}(y)) : 0\le k \le n-1\,\Big\}.
\end{equation}
Given $\varepsilon>0$ we say that $E \subset X$ 
is an $(n,\varepsilon)$-\textit{separated  subset}  if $d_{n}(x,y)\ge  \varepsilon$ 
for any two points $x\neq y \in E$.
\color{black}

Let $\Phi=\{\phi_{n}\}_{n=1}^{\infty}$ be a  sub-additive potential over $(X, T)$.  Given $\vep>0$ and $n\ge 1$ define
\[ P_{n}(T, \Phi, \varepsilon)=\sup \bigg\{\sum_{x\in E} e^{\phi_{n}(x)} : E \hspace{0,1cm}\textrm{is} \hspace{0,1cm}(n, \varepsilon) \textrm{-separated subset of }X \bigg\}.\]
The $\textit{topological pressure}$ of $\Phi$ is defined by
\begin{equation}\label{epsilon}
P(T,\Phi)=\lim_{\varepsilon \rightarrow 0}
\limsup_{n \to +\infty} \frac{1}{n} \log P_{n}(T, \Phi, \varepsilon),
\end{equation} 
where the limit in $\vep$ exists by monotonicity.
%
In \cite{CFH}, Cao, Feng and Huang established a variational principle for the topological pressure of sub-additive families of continuous potentials:
\begin{equation}\label{varitional}
P(T,\Phi)=\sup \bigg\{h_{\mu}(T)+\chi(\mu, \Phi): \mu \in \mathcal{M}_{\text{inv}}(T) \bigg\},
\end{equation}
where $h_{\mu}(T)$ is the measure-theoretic entropy and 
\begin{equation}\label{defchimu}
    \chi(\mu, \Phi):=\lim_{n\to \infty}\frac1n \int \phi_{n}(x) d\mu(x).
\end{equation}
Any invariant measure $\mu \in \mathcal{M}_{\text{inv}}(T)$ achieving the
supremum in \eqref{varitional} is called an \textit{equilibrium state} of $\Phi$.

\begin{rem}
By  upper semi-continuity of the functional ~\eqref{defchimu} with respect to $\mu$, in the weak$^+$ topology, one concludes that if the entropy map $\mu \mapsto h_{\mu}(T)$
is upper semi-continuous (e.g. in case $T$ is a subshift of finite type) then there exists at least one equilibrium state for each sub-additive family of continuous potentials $\Phi$. 
\end{rem}

\begin{rem}\label{(n, 1)separated sets}
If $(\Sigma, T)$ is a subshift of finite type 
endowed with the metric $d$ defined in ~\eqref{metric} then $T$ is an expansive map with expansivity constant 1, meaning that for any $x\neq y\in \Sigma$ there exists $n\in \mathbb Z$ so $d(T^n(x), T^n(y))\ge 1$. In particular the pressure $P(T,\Phi)$ coincides with 
\[
 P^*(\Phi)  =\limsup _{n \rightarrow \infty} \frac{1}{n} \log \sup \left\{\sum_{x \in E}  e^{\phi_{n}(x)}:E \hspace{0,1cm}\textrm{is} \hspace{0,1cm}(n, 1) \textrm{-separated subset of }\Sigma \right\},
\]
meaning that 
one can drop the limit in $\varepsilon$ from the definition of the pressure \eqref{epsilon} (see e.g. \cite{Walters}).
\end{rem}

\subsubsection{Topological entropy of subsets}\label{top_entropy}

Let $T$ be a continuous map on a compact metric space $X$. For any $n\in \N$ and $\varepsilon>0$ we define \textit{Bowen ball} $B_{n}(x, \varepsilon)$ as follows:
\[ 
B_{n}(x, \varepsilon)=\Big\{y\in X : d_{n}(x, y)<\varepsilon\Big\},
\]

where $d_n$ is the metric defined in ~\eqref{new_metric}.
\color{black}

Fix a subset $Y \subset X$ and $\varepsilon>0$. We define a covering of $Y$ as a countable collection of dynamic balls $\mathcal{Y}:=\left\{B_{n_{i}}\left(y_{i}, \varepsilon\right)\right\}_{i}$ such that 
$Y \subset \bigcup_{i} B_{n_{i}}\left(y_{i}, \varepsilon\right)$. Given a collection $\mathcal{Y}=\left\{B_{n_{i}}\left(y_{i}, \varepsilon\right)\right\}_{i}$, we define $n(\mathcal{Y})$ as the minimum value of $n_i$ among all indices $i$.  Let $s\geq 0$ and define
\[ S(Y, s, N, \varepsilon)=\inf \sum_{i} e^{-sn_{i}},\]
 where the infimum is taken over all collections $\mathcal{Y}=\{B_{n_{i}}(x_{i}, \varepsilon)\}_{i}$ that cover $Y$ and satisfy $n(\mathcal{Y})\geq N$. As $S(Y, s, N, \varepsilon)$ is non-decreasing with respect to $N$, the limit  $S(Y, s, N, \varepsilon)$ exists
 \[ S(Y, s, \varepsilon): = \lim_{N\rightarrow \infty} S(Y, s, N, \varepsilon).\]
 There is a critical value of the parameter $s$, which we denote by $h_{\text{top}}(T,Y, \varepsilon)$ such that
\[S(Y, s, \varepsilon)=\left\{\begin{array}{ll}
         0, & \mbox{$s>h_{\text{top}}(T,Y, \varepsilon)$},\\
        \infty, & \mbox{$s<h_{\text{top}}(T,Y, \varepsilon)$}.\end{array} \right . \] 
        Since $h_{\text{top}}(T,Y, \varepsilon)$ does not decrease with $\varepsilon$, the following limit exists,
        \begin{equation}
            \label{eq:defentropy-limit-vep}
        h_{\text{top}}(T,Y)=\lim_{\varepsilon \rightarrow 0}  h_{\text{top}}(T,Y,\varepsilon).
        \end{equation}
        We say that $h_{\text{top}}(T, Y)$ is the \textit{topological  entropy}  of $T$ at $Y$ (we shall denote it simply by $h_{\text{top}}(Y)$ for notational simplicity). We denote $h_{\text{top}}(X, T)=h_{\text{top}}(T)$  (see e.g. \cite{pesin}). \color{black} Again, in case $T$ is subshift of finite type it is enough to take $\varepsilon = 1$ above.

\section{Bunching and typicality for linear cocycles}\label{sec:linearcocycles}

Through this section, we will set some notation, collect some necessary results on the existence of holonomies for fiber-bunched cocycles, and define a notion of typicality for linear cocycles. 
\color{black} We will always assume that 
$T:\Sigma \to \Sigma$ is a topologically mixing subshift of finite type,
where $\Sigma \subset \{1,2,\dots,k\}^{\Z}$ is the symbolic space, endowed with the metric $d$ defined in ~\eqref{metric}, and $T(x_{n})_{n\in \Z}=(x_{n+1})_{n\in \Z}$ for any $(x_{n})_{n\in \Z} \in \Sigma$.

\subsection{One-step cocycles}

The simplest example of a linear cocycles $\mathcal A: \Sigma \to \glr$ is a Locally constant one, meaning that for each $x\in \Sigma$ there exists an open neighborhood $\mathcal V_x\subset \Sigma$ of $x$ such that $\mathcal A(y)=\mathcal A(x)$ for every $y\in \mathcal V_x$.

A simple class  of such linear cocycles are the so-called \textit{one-step cocycles} 
defined as follows.   Given a $k$-tuple of matrices $\textbf{A}=(A_{1},\ldots,A_{k})\in \glr^{k}$ , we associate with it the locally constant map $\mathcal{A}:\Sigma \rightarrow \glr$ given by $\mathcal{A}(x)=A_{x_{0}}$. This means the matrix cocycle $\mathcal{A}$ depends only on the zero-th symbol $x_0$ of $(x_{l})_{l\in \Z}$. It is clear that 
$$
\mathcal A^n(x) = A_{x_{n-1}} \cdot \dots \cdot A_{x_{1}} \cdot A_{x_{0}}
$$
for any $x=(x_{n})_{n\in \Z}\in \Sigma$, and it models a random product of the finite collection of matrices.

\medskip

\color{black}

\subsection{Fiber bunching and holonomies}\label{subsec:fiberbunching}
 Given $\alpha>0$, a linear cocycle $\mathcal{A}:\Sigma \rightarrow GL(d, \R)$ over a topologically mixing subshift of finite type $(\Sigma, T)$ is $\alpha$-H\"older continuous function if there exists $C>0$ such that
\begin{equation}\label{hol}
 \|\mathcal{A}(x)-\mathcal{A}(y)\|\leq Cd(x,y)^{\alpha} \hspace{0,2cm} \quad \forall x,y \in \Sigma.
\end{equation}
We denote by $C^{\alpha}(\Sigma, GL(d, \R))$ the vector space of $\alpha$-H\"older continuous cocycles over the topologically mixing subshift of finite type $(\Sigma, T)$. 

The action of a cocycle $\mathcal A$ can be observed by the skew-product $F: X\times \mathbb R^k \to X\times \mathbb R^k$ given by 
\begin{equation}
    \label{eq:skewp}
F(x,v)=(T(x),\mathcal A(x)v), \qquad 
(x,v)\in X\times \mathbb R^k,
\end{equation}
as the n-th iterate of $F$ is $F^n(x, v)=(T^{n}(x), \mathcal{A}^{n}(x)v)$, for each $n\ge 1$.
\color{black}
We say that the cocycle $\mathcal{A} \in C^{\alpha}(\Sigma, GL(d, \R))$ over $T$ is \textit{fiber bunched} if 
\begin{equation}\label{fiber}
\|\mathcal{A}(x)\|\|\mathcal{A}(x)^{-1}\|<2^{\alpha}
\quad\text{for every $x\in \Sigma$}
\end{equation} 
(note that the constant $2$ appearing above comes from \eqref{metric}). In rough terms, the fiber-bunching condition ensures the skew-product $F$ in ~\eqref{eq:skewp} is partially hyperbolic (see \cite{BV04} for more details).
Let $C_{b}^{\alpha}(\Sigma, GL(d, \R))$ denote the space of $\alpha$-Holder continuous and fiber-bunched cocycles over $T$,
and note that $C_{b}^{\alpha}(\Sigma, GL(d, \R))$
is a $C^0$-open subset of $C^{\alpha}(\Sigma, GL(d, \R))$.


\begin{defn}\label{holonomy}
Given $\mathcal A\in C^{\alpha}(\Sigma, GL(d, \R))$,
$x\in \Sigma$ and $y\in W_{\loc}^{s}(x)$
the \textit{local stable holonomy} 
$H_{y \leftarrow x}^{s} \in GL(d, \R)$ is defined
by the limit (in case the limit exists)
$$
H_{y \leftarrow x}^{s}:=\lim _{n \rightarrow+\infty} \mathcal{A}^n(y)^{-1} \mathcal{A}^n(x).
$$
Similarly, the \emph{local unstable holonomy} $H_{y \leftarrow x}^{u}$ is likewise defined as (in case the limit exists)
$$
H_{y \leftarrow x}^{u}:=\lim _{n \rightarrow-\infty} \mathcal{A}^n(y)^{-1} \mathcal{A}^n(x)
$$
for any $x\in \Sigma$ and $y\in W_{\loc}^{u}(x)$.
\end{defn}

Some comments are in order. First, even though the stable and unstable holonomies depend on the linear cocycle $\mathcal A$, we shall omit its dependence on $\mathcal A$ whenever possible, for notational simplicity. Second, it follows from \cite{BV04} that stable and unstable holonomies exist for fiber-bunched linear cocycles, that stable holonomies satisfy
\begin{itemize}
\item[a)]$H_{x \leftarrow x}^{s}=Id$ and $H_{z \leftarrow y}^{s} \circ H_{y \leftarrow x}^{s}=H_{z \leftarrow x}^{s}$ for any $z,y \in W_{\loc}^{s}(x)$.
\item[b)] $\mathcal{A}(y)\circ H_{y \leftarrow x}^{s}=H_{T(y) \leftarrow T(x)}^{s}\circ \mathcal{A}(x).$
\item[c)] $(x, y, v)\mapsto H^s_{y\leftarrow x}(v)$ is continuous.
\end{itemize}
and similar properties hold for unstable holonomies (with $s$ and $T$ replaced by $u$ and $T^{-1}$, respectively).
Third, one can use item $b)$ above to extend the definition to the global stable holonomy $H_{y\leftarrow x}^{s}$ for $y\in W^{s}(x)$ not necessarily in $W_{\loc}^{s}(x)$:
\begin{equation}\label{extension of holonomy}
H_{y\leftarrow x}^{s}=\mathcal{A}^{n}(y)^{-1} \circ H_{T^{n}(y)\leftarrow T^{n}(x)}^{s}\circ \mathcal{A}^{n}(x),
\end{equation} 
where 
$n\in \N$ is large enough such that $T^{n}(y)\in W_{\loc}^{s}(T^{n}(x))$ (the global unstable holonomy can be defined similarly).
Finally, the canonical 
holonomies vary 
$\alpha$-H\"older 
continuously  (see \cite{KS}), which means that there exists a constant $C > 0$ such that 
\begin{itemize}
\item[d)] $\|H_{y \leftarrow x}^{s} - I\| \leq C d(x,y)^{\alpha}$ for every $y \in W_{\loc}^{s}(x)$ (and an analogous statement for unstable holonomies).
\end{itemize}

\begin{rem}
It is worth mentioning that stable and unstable holonomies always exist for one-step cocycles (see \cite[Proposition 1.2]{BV04} and \cite[Remark 1]{Moh22}).
\end{rem}

In the remainder of this subsection, we will introduce one of the key objects to be used in this paper.
Assume that $x, y \in \Sigma$, $x^{\prime} \in W_{\text {loc }}^{u}(x)$, and $y^{\prime}:=T^n(x') \in W_{\text {loc }}^{s}(y)$ for some $n \in \mathbb{N}$. We can describe these points as forming a path 
from $x$ to $y$ via $x'$ and $y'$, which can be represented as:
\begin{equation}\label{eqpath}
x \stackrel{W_{\text {loc }}^{u}(x)}{\longrightarrow} x^{\prime} \stackrel{T^{n}}{\longrightarrow} y^{\prime} \stackrel{W_{\text {loc }}^{s}(y)}{\longrightarrow} y    
\end{equation}
(see Figure~\ref{fig1} below).
\begin{figure}[htb]
    \centering
    \includegraphics[width=0.9\linewidth]{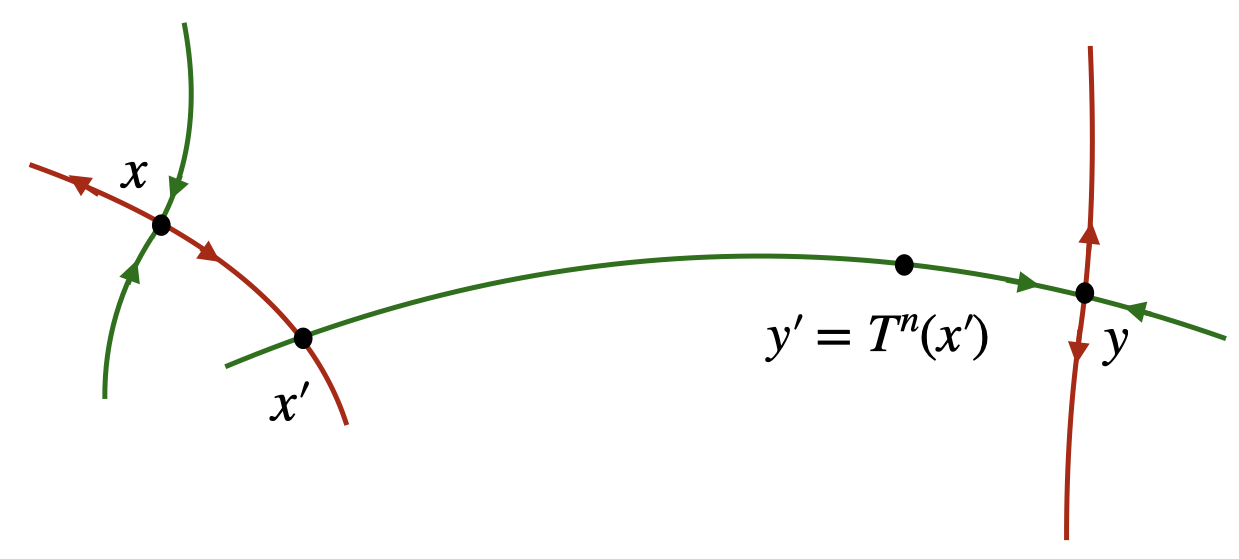}
    \caption{Path from $x$ to $y$ as in~\eqref{eqpath}.}
    \label{fig1}
\end{figure}
 To such a path and a linear cocycle $\mathcal{A}: \Sigma \rightarrow GL(d, \R)$ that admits canonical holonomies, one associates the 
matrix 
\begin{equation}\label{eq:loop}
B_{x, x^{\prime},y^{\prime},  y}:=H_{y \leftarrow y^{\prime}}^{s} \mathcal{A}^{n}\left(x^{\prime}\right) H_{x^{\prime} \leftarrow x}^{u}.
    \end{equation}
   When the context is clear, we will simply use the terminology “path from $x$ to $y$” while referring  to $B_{x,y}$.

\subsection{Typical cocycles}
\label{subsec:typical}
Let $T:\Sigma \to \Sigma$ be a topologically mixing subshift of finite type.
 Suppose that $p\in \Sigma$ is a periodic point of $T$. We say $z\in \Sigma\setminus\{p\}$ is a \textit{homoclinic point} associated to $p$ if 
 $z\in W^{s}(p) \cap W^{u}(p)$. We denote the set of all homoclinic points of $p$ by
$\mathcal{H}(p)$. For each 
$z\in W_{\text{loc}}^{s}(p) \cap W_{\text{loc}}^{u}(p)$
consider the matrix 
\begin{equation}\label{eq:matrixtildeH}
 {\widetilde H}_{p}^z :=H_{p \leftarrow z}^s \circ H_{z \leftarrow p}^u    
\end{equation}
associated to the homoclinic loop. Such matrices have been used in a crucial way to prove simplicity of the Lyapunov spectrum for typical fiber-bunched linear cocycles (cf. \cite{BV04}).
In general, given $z\in \mathcal H(p)$, up to replacing $z$ by some backward iterate, we may suppose that $z\in W_{\text{loc}}^{u}(p)$ and $T^{n}(z)\in W_{\text{loc}}^{s}(p)$ for
some $n \geq 1$. 
Then, using 
\eqref{extension of holonomy}, 
one defines
\begin{equation}
\label{eq:analogueeq}
 {\widetilde H}_{p}^z =\mathcal{A}^{-n}(p)\circ H_{p \leftarrow T^{n}(z)}^{s} \circ \mathcal{A}^{n}(z) \circ H_{z \leftarrow p}^{u}.    
\end{equation}

\begin{defn}\label{typical1}
Suppose that $\mathcal{A} \in C_{b}^{\alpha}(\Sigma, GL(d, \R))$. We say that $\mathcal{A}$ is \textit{1-typical} with respect to a pair $(p,z)$, where $p$ is a  periodic point for $T$ and $z\in \mathcal H(p)$ if:
\begin{itemize}
\item[(i)]  (pinching) \color{black} 
the eigenvalues of  $\mathcal{A}^{per(p)}(p)$ have multiplicity $1$ and distinct absolute value,
\item[(ii)] (twisting)   the eigenvectors $\left\{v_{1}, \ldots, v_{d}\right\}$ of $\mathcal{A}^{per(p)}(p)$ are such that, for any $I, J \subset \{1, \ldots, d\}$ with $|I|+$ $|J| \leq d$, the set of vectors
$$
\left\{ {\widetilde H}_{p}^z  \left(v_{i}\right): i \in I\right\} \cup\left\{v_{j}: j \in J\right\}
$$
is linearly independent.
\end{itemize}
\end{defn}

\begin{defn}
    We say $\mathcal{A} \in C_{b}^{\alpha}(\Sigma, GL(d, \R))$ is \textit{typical} if $\mathcal{A}^{\wedge t}$ is 1-typical with respect to the same pair $(p, z)$ for all $1 \leq t \leq d-1$.
\end{defn}


\begin{rem}\label{fixed point} For simplicity, we will always assume that $p$ in Definition~\ref{typical1} is a fixed point by considering the map $T^{\text{per}(p)}$ and the cocycle $\mathcal{A}^{\text{per}(p)}$ if necessary (this is possible because powers of typical cocycles are typical). Moreover, for any homoclinic point $z \in \mathcal{H}(p)$, $T^n (z)$ is a homoclinic point of $p$ for any  $n \in \mathbb{Z}$. 
\end{rem}

\begin{rem}\label{rem:typi}
The definition of typical cocycles described above is slightly stronger than the notion of typical cocycles introduced by Bonatti and Viana \cite{BV04}. In their definition, they only require 1-typicality of $\mathcal{A}^{\wedge t}$ for $1 \leq t \leq d / 2$, and they do not require the pair $(p, z)$ to be 
independent of $t$. 
 Although our version of typicality is slightly more demanding than the one in \cite{BV04}, a minor modification of their arguments shows that the set of typical cocycles remains open and dense
(see e.g. \cite[Remark 2.11]{park20} or \cite[page 1962]{Par22}).
\end{rem}

\subsection{Bounded distortion}
 We say that a family of continuous sub-additive real-valued potentials $\Phi:=\{\phi_{n}\}_{n=1}^{\infty}$ over  $(\Sigma, T)$ has \textit{bounded distortion} if  there exists $C\geq 1$ such that 
 \begin{equation}\label{bounded distortion}
C^{-1} \leq \frac{\exp(\phi_{n}(x))}{\exp(\phi_{n}(y))} \leq C
\end{equation} 
for any $x, y \in [I]$, every $I \in \mathcal{L}_{n}$ and every $n\in \N$.
Simple examples of sub-additive potentials with bounded distortion are given by singular value potentials $\varphi^{s}(\mathcal{A})$
 of one-step $GL(d, \R)$-cocycles $\mathcal{A}$, where $\varphi^{s}(\mathcal{A}^{n}(x))=\varphi^{s}(\mathcal{A}^{n}(y))$ for any $x, y \in [I].$


\section{Upper bound on the entropy of level sets }\label{sec:proofupperbddthmA1}

This section is devoted to the proof of the upper bound in the variational principle stated in Theorem~\ref{thmA}. More precisely, in this section we shall prove the following:

\begin{thm}
    \label{thmA-part1}
    If $\mathcal{A}:\Sigma \to \text{GL}(d,\R)$ is a typical cocycle then,
for each $\vec{\alpha}\in \mathring{\vec{L}}$, 
\begin{equation}
    \label{eq:ineq1thmA}
    h_{\mathrm{top}}(T,E(\vec{\alpha})) \le \inf_{q\in \R^k}
  \Big\{\; P \left(T,\log \Psi^{q}(\mathcal{A})\right)- \langle q, \vec{\alpha} \rangle \; \Big\}.
\end{equation}
\end{thm}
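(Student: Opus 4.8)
The plan is to prove the upper bound via a standard covering argument combined with the sub-multiplicativity structure hidden inside the generalized singular value function, after exploiting the bounded distortion property of $\psi^q(\mathcal A(\cdot))$. Fix $\vec\alpha\in\mathring{\vec L}$ and let $q\in\R^d$ be arbitrary; it suffices to show $h_{\mathrm{top}}(T,E(\vec\alpha))\le P(T,\log\Psi^q(\mathcal A))-\langle q,\vec\alpha\rangle$. The first step is to observe that a point $x\in E(\vec\alpha)$ satisfies $\frac1n\log\sigma_i(\mathcal A^n(x))\to\alpha_i$ for all $i$, hence $\frac1n\log\psi^q(\mathcal A^n(x))\to\sum_i q_i\alpha_i=\langle q,\vec\alpha\rangle$. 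Consequently, for every $\delta>0$, the set $E(\vec\alpha)$ is contained in $\bigcup_{N\ge 1}\bigcap_{n\ge N}\{x: \log\psi^q(\mathcal A^n(x))\le n(\langle q,\vec\alpha\rangle+\delta)\}$, and by countable stability of topological entropy it is enough to bound $h_{\mathrm{top}}(T,Y_N)$ where $Y_N=\bigcap_{n\ge N}\{x:\log\psi^q(\mathcal A^n(x))\le n(\langle q,\vec\alpha\rangle+\delta)\}$.

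The second step is the covering estimate. Using expansivity of the subshift (Remark~\ref{(n, 1)separated sets}) I can work with $\varepsilon=1$ and with cylinders: for $n\ge N$, cover $Y_N$ by the cylinders $[I]$, $I\in\mathcal L_n$, that actually meet $Y_N$. For such $I$, pick $x_I\in[I]\cap Y_N$; then by definition of $Y_N$ we have $\psi^q(\mathcal A^n(x_I))\le e^{n(\langle q,\vec\alpha\rangle+\delta)}$, and by the bounded distortion property (Remark~\ref{bdd distortion-remark}, applied to $\log\Psi^q(\mathcal A)$) there is a constant $C\ge 1$, independent of $n$ and $I$, with $\psi^q(\mathcal A(I))=\max_{x\in[I]}\psi^q(\mathcal A^n(x))\le C\,\psi^q(\mathcal A^n(x_I))\le C\,e^{n(\langle q,\vec\alpha\rangle+\delta)}$. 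Now form the cover $\mathcal Y=\{B_n(x_I,1)\}_I$ of $Y_N$ and estimate, for $s=P(T,\log\Psi^q(\mathcal A))-\langle q,\vec\alpha\rangle+2\delta$,
\[
S(Y_N,s,n,1)\le\sum_{I}e^{-sn}
=e^{-sn}\cdot\#\{I\in\mathcal L_n:[I]\cap Y_N\neq\emptyset\}.
\]
To control the cardinality, write
\[
\#\{I:[I]\cap Y_N\neq\emptyset\}
\le e^{-n(\langle q,\vec\alpha\rangle+\delta)}\,C\sum_{I\in\mathcal L_n}\psi^q(\mathcal A(I))\cdot C^{-1}e^{n(\langle q,\vec\alpha\rangle+\delta)}\psi^q(\mathcal A(I))^{-1}\cdot\big(\text{bound }1\big),
\]
more cleanly: each contributing $I$ has $\psi^q(\mathcal A(I))\le C e^{n(\langle q,\vec\alpha\rangle+\delta)}$, so
\[
\#\{I:[I]\cap Y_N\neq\emptyset\}\cdot e^{-n(\langle q,\vec\alpha\rangle+\delta)}C^{-1}
\le\sum_{I\in\mathcal L_n,\,[I]\cap Y_N\neq\emptyset}\frac{\psi^q(\mathcal A(I))}{e^{n(\langle q,\vec\alpha\rangle+\delta)}C^{-1}\,\psi^q(\mathcal A(I))}\le\sum_{I\in\mathcal L_n}\psi^q(\mathcal A(I))=Z_n(q),
\]
wait—this needs the elementary bound $1\le C e^{n(\langle q,\vec\alpha\rangle+\delta)}/\psi^q(\mathcal A(I))$ valid precisely for contributing $I$, which rearranges to $\#\{I:[I]\cap Y_N\neq\emptyset\}\le C e^{-n(\langle q,\vec\alpha\rangle+\delta)}Z_n(q)$. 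Therefore
\[
S(Y_N,s,n,1)\le C\,e^{-sn}e^{-n(\langle q,\vec\alpha\rangle+\delta)}Z_n(q)
= C\,e^{-n(\langle q,\vec\alpha\rangle+\delta)}e^{-n(P(T,\log\Psi^q(\mathcal A))-\langle q,\vec\alpha\rangle+2\delta)}Z_n(q).
\]
Since $\frac1n\log Z_n(q)\to P(T,\log\Psi^q(\mathcal A))$ (the limit exists for typical cocycles, by Section~\ref{sec:proofupperbddthmA1}), the right-hand side is bounded by $C\,e^{-n\delta}e^{o(n)}\to 0$ as $n\to\infty$. Hence $S(Y_N,s,1)=0$, giving $h_{\mathrm{top}}(T,Y_N,1)\le s$. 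Letting $\delta\to0$ and then taking the infimum over $q$ and the supremum over $N$ yields \eqref{eq:ineq1thmA}.

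The main obstacle I anticipate is not the covering bookkeeping but justifying two analytic inputs cleanly: first, that $\log\Psi^q(\mathcal A)$ genuinely has the bounded distortion property for \emph{every} $q\in\R^d$ (the paper signals this in Remark~\ref{bdd distortion-remark} and attributes it to Hölder continuity of the holonomies together with fiber-bunching, so I would invoke it as a black box); and second, that $P^*(\log\Psi^q(\mathcal A))$ is an honest limit rather than a $\limsup$, which is where typicality—specifically quasi-multiplicativity of $\psi^q(\mathcal A(\cdot))$ established earlier in this section—enters, via a Fekete-type subadditivity argument on $\log Z_n(q)$ up to a uniform multiplicative constant. A secondary subtlety is the uniformity of the distortion constant $C$ in $n$; this is exactly what bounded distortion provides, so no extra work is needed there. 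Everything else is the routine Bowen-style entropy-of-non-compact-sets machinery recalled in Subsection~\ref{top_entropy}.
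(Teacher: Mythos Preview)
Your overall strategy---cover $E(\vec\alpha)$ by countably many sets on which $\frac1n\log\psi^q(\mathcal A^n(\cdot))$ is close to $\langle q,\vec\alpha\rangle$, then bound the number of $n$-cylinders meeting each piece in terms of $Z_n(q)$---is the dual of the paper's argument, which instead bounds $P_N(T,\langle q,\Phi_{\mathcal A}\rangle,1)$ from below via $(N,1)$-separated subsets of the analogous sets $G(\vec\alpha,n,r)$. The two routes are essentially equivalent.

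However, the counting step contains a genuine error of direction. You define $Y_N$ via the \emph{upper} bound $\log\psi^q(\mathcal A^n(x))\le n(\langle q,\vec\alpha\rangle+\delta)$, deduce $\psi^q(\mathcal A(I))\le C\,e^{n(\langle q,\vec\alpha\rangle+\delta)}$ for each cylinder $[I]$ meeting $Y_N$, and then assert this ``rearranges'' to $\#\{I\}\le C\,e^{-n(\langle q,\vec\alpha\rangle+\delta)}Z_n(q)$. It does not: an upper bound on each term $\psi^q(\mathcal A(I))$ gives no upper bound on the \emph{number} of terms in relation to their sum $Z_n(q)$. Summing your inequality $1\le C\,e^{n(\langle q,\vec\alpha\rangle+\delta)}/\psi^q(\mathcal A(I))$ yields $\#\{I\}\le C\,e^{n(\langle q,\vec\alpha\rangle+\delta)}\sum_I \psi^q(\mathcal A(I))^{-1}$, which is unrelated to $Z_n(q)$.

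The fix is to use the \emph{lower} bound instead: set $Y_N=\bigcap_{n\ge N}\{x:\log\psi^q(\mathcal A^n(x))\ge n(\langle q,\vec\alpha\rangle-\delta)\}$, which also exhausts $E(\vec\alpha)$ as $N\to\infty$. For each contributing $I\in\mathcal L_n$ and any $x_I\in[I]\cap Y_N$, directly from the definition $\psi^q(\mathcal A(I))=\max_{x\in[I]}\psi^q(\mathcal A^n(x))\ge\psi^q(\mathcal A^n(x_I))\ge e^{n(\langle q,\vec\alpha\rangle-\delta)}$, whence
\[
Z_n(q)\ \ge\ \sum_{[I]\cap Y_N\neq\emptyset}\psi^q(\mathcal A(I))\ \ge\ \#\{I:[I]\cap Y_N\neq\emptyset\}\cdot e^{n(\langle q,\vec\alpha\rangle-\delta)},
\]
and the rest of your computation goes through. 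Note that bounded distortion is not needed here (the max in the definition of $\psi^q(\mathcal A(I))$ already gives the right inequality), nor do you need the limit defining $P$ to exist---the $\limsup$ suffices. This is exactly what the paper does in Theorem~\ref{thm for qm cocycles}: the key input there is the lower bound $\sum_i q_i\log\sigma_i(\mathcal A^N(x))\ge N\sum_i(\alpha_i q_i-|q_i|/r)$ for $x\in G(\vec\alpha,n,r)$.
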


The proof of this theorem will occupy the remainder of this section. To this end,
we will start by proving that the pressure function $P^*\left(\log \Psi^{q}(\mathcal{A})\right)$, defined by a $\limsup$ in ~\eqref{eq:defPressure}, can be expressed as a limit in case of typical cocycles. 
Let us first recall some
necessary concepts. For any $\mathcal{A}:\Sigma \rightarrow GL(d, \R)$ and $I\in \mathcal{L}$, recall that 
\begin{equation*}\label{definition of the cocycle for words}
     \|\mathcal{A}(I)\|=\max_{x\in [I]} \|\mathcal{A}^{|I|}(x)\|
     \quad
     \text{and}
\quad 
\psi^{q}(\A(I))=\max_{x\in [I]}\psi^{q}(\A^{|I|}(x)).
\end{equation*}
Similarly, if $\mathbb{V}_{t}:=\mathbb{R}^{{\bf d}_t}$, where ${\bf d}_t=\left( \begin{array}{l}d \\ t\end{array}\right)$, and $\mathcal{A}_t:=\mathcal{A}^{\wedge t}: \Sigma \to \text{GL}(\mathbb{V}_{t})$ is the $t^{th}$-exterior power,  for each $1\le t \le d-1$,
 one defines 
\begin{equation}
    \label{eq:def-psiq}
\left\|\mathcal{A}_t(\mathrm{I})\right\|:=\max _{x \in[\mathrm{I}]}\left\|\mathcal{A}_t^{|\mathrm{I}|}(x)\right\| 
\quad
\text{and}
\quad 
\psi^{q}(\A_t(I)):=\max_{x\in [I]}\psi^{q}(\A_t^{|I|}(x)).
\end{equation}

We say that a fiber bunched linear cocycle $\mathcal{A}:\Sigma \to GL(d, \R)$  is \textit{simultaneously quasi-multiplicative} if there exist $c>0$ and $k \in \N$ such that for all $I, J \in \mathcal{L}$, there is $K=K(I, J) \in \mathcal{L}_{k}$ such that $IKJ \in \mathcal{L}$ and 
\begin{equation}
\label{eq:defquasimult}
   \|\mathcal{A}^{\wedge i} (IKJ)\|\geq c \|\mathcal{A}^{\wedge i}(I)\| \|\mathcal{A}^{\wedge i}(J)\|,
\quad \text{for every $1\le i \le d-1$}. 
\end{equation}

\begin{rem}\label{bdd distortion-remark}
If $\mathcal{A}:\Sigma \to \glr$ is a H\"older continuous and fiber-bunched cocycle over a topologically mixing subshift of finite type $(\Sigma, T)$ then the sub-additive sequence of potentials $\{\log \|\A_t^n \|\}_{n=1}^{\infty}$ satisfies the bounded distortion property $\eqref{bounded distortion}$ for every $1\le t \le d-1$ as a consequence of \cite[Lemma 3.10]{park20}.

\end{rem}

%
The next lemma ensures that the simultaneous quasi-multiplicativity condition is itself sufficient to guarantee that the pressure function is computed as a limit.

\begin{lem}\label{topological_pressure}
Assume that a fiber bunched cocycle $\mathcal{A}:\Sigma \to GL(d, \R)$ is simultaneously quasi-multiplicative. Then the sequence in the definition of  $P^*(T,\log \Psi^q(\mathcal{A}))$ is convergent, for any $q\in \R^{d}.$
\end{lem}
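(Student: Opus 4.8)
The aim is to show that the sequence $\frac1n\log Z_n(q)$ converges, where $Z_n(q)=\sum_{I\in\mathcal L_n}\psi^q(\mathcal A(I))$. The natural strategy is to reduce to a (near-)supermultiplicativity statement for the partition functions and then invoke Fekete's subadditive lemma applied to $-\log Z_n$. Recall from \eqref{eq:defgenps} that $\psi^q(A)=\prod_{i=1}^{d-1}\|A^{\wedge i}\|^{q_i-q_{i+1}}\,\|A^{\wedge d}\|^{q_d}$ and $\|A^{\wedge d}\|=|\det A|$ is genuinely multiplicative. So the only factors that can fail to be multiplicative along concatenations are the $\|A^{\wedge i}\|^{q_i-q_{i+1}}$ for $1\le i\le d-1$, and the sign of the exponent $q_i-q_{i+1}$ determines whether we need an upper or a lower bound on $\|\mathcal A^{\wedge i}(\cdot)\|$ under concatenation. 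Submultiplicativity $\|\mathcal A^{\wedge i}(IJ)\|\le\|\mathcal A^{\wedge i}(I)\|\,\|\mathcal A^{\wedge i}(J)\|$ handles one direction for free; the other direction is exactly what simultaneous quasi-multiplicativity \eqref{eq:defquasimult} provides, at the cost of inserting a connecting word $K=K(I,J)\in\mathcal L_k$ and a multiplicative constant $c>0$ uniform in $i$.

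\textbf{Key steps.} First I would fix $q\in\R^d$ and split the index set $\{1,\dots,d-1\}$ into $S^+=\{i:q_i-q_{i+1}\ge0\}$ and $S^-=\{i:q_i-q_{i+1}<0\}$. For words $I\in\mathcal L_n$, $J\in\mathcal L_m$, and the connecting word $K=K(I,J)\in\mathcal L_k$ given by simultaneous quasi-multiplicativity, I would estimate $\psi^q(\mathcal A(IKJ))$ from below: for $i\in S^+$ use $\|\mathcal A^{\wedge i}(IKJ)\|\ge c\,\|\mathcal A^{\wedge i}(I)\|\,\|\mathcal A^{\wedge i}(J)\|$ from \eqref{eq:defquasimult}; for $i\in S^-$ use submultiplicativity in the appropriate form together with the bounded distortion property (Remark~\ref{bdd distortion-remark}) and a uniform bound $\|\mathcal A^{\wedge i}(K)\|\le M^k$ with $M=\max_{x}\|\mathcal A(x)\|$ to absorb the effect of $K$ into a constant. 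The upshot is an inequality of the form
\[
\psi^q(\mathcal A(IKJ))\ \ge\ C_1^{-1}\,\psi^q(\mathcal A(I))\,\psi^q(\mathcal A(J))
\]
with $C_1=C_1(q,k,c,d)>0$ independent of $I,J,n,m$; here one also uses $\|\mathcal A^{\wedge d}(IKJ)\|=\|\mathcal A^{\wedge d}(I)\|\,\|\mathcal A^{\wedge d}(K)\|\,\|\mathcal A^{\wedge d}(J)\|$ and that $\|\mathcal A^{\wedge d}(K)\|$ lies in a fixed compact subset of $(0,\infty)$. Second, summing over $I\in\mathcal L_n$ and $J\in\mathcal L_m$ and noting that the map $(I,J)\mapsto IKJ$ is at most $(\#\mathcal L_k)$-to-one onto words in $\mathcal L_{n+m+k}$ (distinct pairs can only collide through the choice of $K$, of which there are boundedly many), I would obtain
\[
Z_{n+m+k}(q)\ \ge\ \frac{C_1^{-1}}{\#\mathcal L_k}\,Z_n(q)\,Z_m(q).
\]
Third, setting $a_n=-\log\big(C_2\,Z_n(q)\big)$ for a suitable constant $C_2>0$ chosen so that $a_{n+m+k}\le a_n+a_m$ for all $n,m$ (the standard trick of replacing $n+m+k$ by $n+m$ via an auxiliary shift, e.g. applying Fekete to the $k$-shifted sequence or using the version of Fekete's lemma allowing an additive error term), I would conclude by Fekete's subadditive lemma that $\frac1n a_n$ converges, hence $\lim_{n\to\infty}\frac1n\log Z_n(q)$ exists. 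Since $P^*(T,\log\Psi^q(\mathcal A))$ is defined as the $\limsup$ of this sequence, the limit equals $P^*$, which is the claim. I would denote the common value $P(T,\log\Psi^q(\mathcal A))$.

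\textbf{Main obstacle.} The routine parts are the submultiplicativity bookkeeping and the counting argument for the collision multiplicity of $(I,J)\mapsto IKJ$. The delicate point is the treatment of the indices $i\in S^-$ where $q_i-q_{i+1}<0$: there the quasi-multiplicativity lower bound goes the wrong way, and one must instead control $\|\mathcal A^{\wedge i}(IKJ)\|$ from \emph{above} uniformly in terms of $\|\mathcal A^{\wedge i}(I)\|$ and $\|\mathcal A^{\wedge i}(J)\|$ — which submultiplicativity gives — but one must simultaneously ensure the inserted word $K$ does not shrink the norm too much. This is where bounded distortion (Remark~\ref{bdd distortion-remark}) and the uniform two-sided bounds $m_0^{|I|}\le\|\mathcal A^{\wedge i}(I)\|\le M_0^{|I|}$ coming from $\mathcal A$ being H\"older, fiber-bunched and $\glr$-valued (so $\|\mathcal A(x)^{-1}\|$ is bounded) are essential; keeping the resulting constant $C_1$ independent of the word lengths is the technical heart of the argument. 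Once that uniformity is secured, Fekete's lemma closes the proof immediately.
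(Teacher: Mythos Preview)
Your proposal is correct and follows essentially the same route as the paper's proof: split the exponents $t_i=q_i-q_{i+1}$ by sign, apply quasi-multiplicativity for $t_i\ge0$ and submultiplicativity (plus the uniform bound on $\|\mathcal A^{\wedge i}(K)\|$ coming from $|K|=k$ fixed) for $t_i<0$, use bounded distortion to pass between $\psi^q(\mathcal A(I))$ and $\prod_i\|\mathcal A^{\wedge i}(I)\|^{t_i}$, deduce $Z_{n+m+k}(q)\ge C_2\,Z_n(q)Z_m(q)$, and finish with a Fekete-type argument. Two minor simplifications relative to your write-up: the map $(I,J)\mapsto IK(I,J)J$ is actually injective (the first $n$ and last $m$ letters of the concatenation recover $I$ and $J$), so no collision-counting factor $1/\#\mathcal L_k$ is needed; and your ``main obstacle'' is less delicate than you suggest, since for $i\in S^-$ only an \emph{upper} bound on $\|\mathcal A^{\wedge i}(IKJ)\|$ is required, which submultiplicativity together with the fixed length of $K$ gives immediately---there is no need for the two-sided bounds $m_0^{|I|}\le\|\mathcal A^{\wedge i}(I)\|\le M_0^{|I|}$.
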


\begin{proof}
By the simultaneous quasi-multiplicativity property of $\mathcal{A}$, there exist $c>0$ and $k \in \N$ such that for all  $m, n > k$,  $I \in \mathcal{L}_{n}$ and $J \in \mathcal{L}_{m}$, there is $K \in \mathcal{L}_{k}$ such that $IKJ \in \mathcal{L}$ and 
$\|\mathcal{A}^{\wedge i} (IKJ)\|\geq c \|\mathcal{A}^{\wedge i}(I)\| \|\mathcal{A}^{\wedge i}(J)\|,$ for each $1\le i \le d-1$. 
Now, by definition of the generalized 
singular value function 
\eqref{eq:defgenps}
and the bounded distortion property  (see Remark \ref{bdd distortion-remark}), 
for any $q=(q_1, \ldots, q_d)\in \R^{d}$
there exists $c_0>0$ such that
\begin{equation}
    \label{eq:aux1}
    c_{0}^{-1}\prod_{i=1}^{d} \|  \mathcal{A}^{\wedge i} (IKJ)\|^{t_{i}}\geq \psi^q(\mathcal{A}(IKJ)) \geq c_{0}\underbrace{\prod_{i=1}^{d} \|\mathcal{A}^{\wedge i} (IKJ)\|^{t_{i}}}_\text{($\star$)},
\end{equation}
where $t_{i}=q_{i}-q_{i+1},$ and $q_{d+1}=0$ for $1\leq i \leq d.$
If $t_{i}< 0$, then by  sub-multiplicativity of the norm of the cocycle $\mathcal A^{\wedge i}$,  there is $C_{0}>0$ 
(depending only on $\mathcal A$)
such that
\begin{equation}\label{sub-multiplicative}
 \|\mathcal{A}^{\wedge i} (IKJ)\|^{t_{i}} \geq C_{0}^{t_{i}} \|\mathcal{A}^{\wedge i} (I)\|^{t_{i}} \|\mathcal{A}^{\wedge i} (J)\|^{t_{i}}.
\end{equation}
If $t_{i}\geq  0$, the simultaneous quasi-multiplicativity of $\mathcal{A}$, ensures that 
\begin{equation}\label{quasi-mult_eq}
 \|\mathcal{A}^{\wedge i} (IKJ)\|^{t_{i}} \geq c^{t_{i}} \|\mathcal{A}^{\wedge i} (I)\|^{t_{i}} \|\mathcal{A}^{\wedge i} (J)\|^{t_{i}}.
\end{equation}
Altogether, Remark \ref{bdd distortion-remark} and equations \eqref{sub-multiplicative} and \eqref{quasi-mult_eq} imply that 
\[ (\star) \geq C_{1} \prod_{i=1}^{d} \|\mathcal{A}^{\wedge i}(I)\|^{t_{i}} \prod_{i=1}^{d} \|\mathcal{A}^{\wedge i}(J)\|^{t_{i}},\]
where $C_{1}:=C_{1}(c_0, C^{t_{i}}, c^{t_i}, C_{0}^{t_{i}})>0.$ 
The latter, combined with ~\eqref{eq:defpartitionfunction} and ~\eqref{eq:aux1}, guarantees that  
$\psi^q(\mathcal{A}(IKJ)) \geq C_2 \;\psi^q(\mathcal{A}(I))\; \psi^q(\mathcal{A}(J))$ 
where $C_2:=c_0\,C_1>0$ and, consequently,
\[Z_{n+k+m}(q)\geq C_{2} Z_{n}(q)Z_{m}(q).\]
Hence the sequence $a_{n}:= \log Z_{n}(q) +\log C_{2}$ satisfies
$a_{n+m+k} \geq a_{n}+a_{m}$ for all $m, n>k$. 
 The convergence of the sequence 
$(\frac{a_n}{n})_{n\ge 1}$ follows as in the proof
of Fekete's lemma (cf. \cite{Bow}).  
This completes the proof of the lemma.

\end{proof}

 Now, consider the potential $\Phi_{\mathcal A}:\Sigma \to \mathbb R^d$ defined by
$$
 \Phi_{\mathcal{A}}:=\left( \log \sigma_{1}(\mathcal{A}), \ldots, \log\sigma_{d}(\mathcal{A})\right),
$$
and note that, denoting by  $\langle \cdot, \cdot \rangle$ the usual inner product on $\mathbb R^d$,
one has that
 $$
 \langle q, \Phi_{\mathcal{A}} \rangle =
  \log \Psi^q(\mathcal{A}) 
 \qquad \text{for every $q\in \R^d$}.
 $$ We denote 
 \begin{equation}\label{eqpnz}
 P_{n}(T, \langle q, \Phi_{\mathcal{A}} \rangle, 1):=\sup \left\{\sum_{x\in E} \psi^q(\mathcal{A}^{n}(x)) : E \hspace{0,1cm}\textrm{is} \hspace{0,1cm}(n, 1) \textrm{-separated subset of }\Sigma \right\}.    
 \end{equation}
 According to Lemma~\ref{topological_pressure}, 
 $$
 P\left(T,\log \Psi^q(\mathcal{A})\right)
 =P(T,\langle q, \Phi_{\mathcal{A}} \rangle)=\lim_{n\to \infty} \frac{1}{n}\log P_{n}(T, \langle q, \Phi_{\mathcal{A}} \rangle, 1).
 $$
\medskip

In \cite[Theorem 4.1]{park20}, Park proved that 
typical cocycles satisfy a weaker notion of simultaneous quasi-multiplicativity, defined in a 
way that the connecting word $K$ appearing in ~\eqref{eq:defquasimult} is bounded. We need the following strengthened version. 



\begin{prop}\label{typical cocycles are qm}
 Let $\mathcal{A}:\Sigma \to GL(d, \R)$ be a typical cocycle. Then $\mathcal{A}:\Sigma \to GL(d, \R)$   is simultaneously quasi-multiplicative.
\end{prop}
\color{black}
\begin{proof}

The proof of the proposition is inspired by the proof of Theorem 4.1 in \cite{park20}, where it is shown that
if $\mathcal{A}:\Sigma \to GL(d, \R)$ is a typical cocycle then there exist $c>0$ and $k \in \N$ such that for all $I, J \in \mathcal{L}$, there is $K=K(I, J) \in \mathcal{L}$ with $|K|\le k$ such that $IKJ \in \mathcal{L}$ and 
\begin{equation}
\label{eq:defquasimultweakP}
   \|\mathcal{A}^{\wedge i} (IKJ)\|\geq c \|\mathcal{A}^{\wedge i}(I)\| \|\mathcal{A}^{\wedge i}(J)\|,
\quad \text{for every $1\le i \le d-1$}. 
\end{equation}
To prove the proposition one needs to show that 
~\eqref{eq:defquasimultweakP} holds with words $K$ of constant length $k$, that is with words in $\mathcal L_k$. 
We start by noticing that, 
if $\ell_0\ge 1$ is the integer given by
\cite[Lemma~4.13]{park20}, for any $\ell \ge \ell_0$
the transition word $K=K(I,J)$ can be chosen such that 
\begin{equation}
\label{eqqK}
    |K(I,J)|= 2m + 2\bar \tau + n +\hat n + 2\ell
\end{equation}
where $m=m(I,J)$, $n=n(I,J)$ and 
$\hat n=\hat n(I,J)$ are constants determined by invariance of cones
(see Lemmas 4.22 and 4.12 in \cite{park20}) and
 $\bar\tau$ is a constant given by primitivity of the subshift of finite type, 
 all of them independent of $\ell$. 
In particular, there exists $C\ge 1$ so that $|K(I,J)|\le C + 2\ell$
(cf. \cite[page~1983]{park20}). 

\medskip
We now show that the length of the connecting words $K\in \mathcal{L}$ above can be chosen uniform  
we use the same notations as in \cite{park20}, for the reader's convenience. 
Fix $k_0=C+3\ell_0$. Given $I,J \in \mathcal L$ choose 
$$
\ell(I,J)=\frac12[C+3\ell_0-2m(I,J) - 2\bar \tau - n(I,J) -\hat n(I,J)]
$$ 
which, by construction, satisfies 
$\ell(I,J) \ge \frac32 \ell_0$.
The argument described above guarantees that 
there exists $K=K(I,J)\in \mathcal L$
satisfying equations ~\eqref{eq:defquasimultweakP} and \eqref{eqqK} with $\ell=\ell(I,J)$, hence $|K(I,J)|=k_0$.
This completes the proof of the proposition.

\end{proof}

\begin{rem}
The lack of control over the length of the connecting word $K$ in quasi-multiplicativity is a limitation when studying important applications (cf. \cite{BT22, MP-uniform-qm}). Proposition \ref{typical cocycles are qm} is the first result extending  it beyond one-step cocycles. Indeed,
 Bárány and Troscheit \cite[Proposition 2.5]{BT22} proved that if a one-step cocycle $\A$ is fully strongly irreducible and proximal, then $\A$ is simultaneously quasi-multiplicative. Recently, Mohammadpour and Park \cite{MP-uniform-qm} generalized their result for the norm of linear cocycles under the ireducibility assumption (see \cite[Corollary 1.2]{MP-uniform-qm}).        
\end{rem}

 Given a fiber bunched linear cocycle  
 $\A: \Sigma \to \glr$,  $\vec{\alpha}=(\alpha_1, \ldots, \alpha_d) \in \mathring{\vec{L}}$ and $r>0$, consider the set
\begin{equation}
\label{eq:g-alpha}
G(\vec{\alpha}, n, r):=\bigg\{x \in \Sigma:\hspace{0.1cm}  \bigg|\frac{1}{m}  \log \sigma_{i}(\mathcal{A}^{m}(x))-\alpha_i \bigg|<\frac{1}{r} \text{  for all }1 \leq i \leq d \text{ and } m \geq n \bigg\}.    
\end{equation}
It is clear from the definition that, for any $r>0$,
\[ E(\vec{\alpha}) \subset \bigcup_{n=1}^{\infty}  G(\vec{\alpha}, n, r ).\]

 \begin{lem}\label{thm for qm cocycles}
 Assume that $\mathcal{A}:\Sigma \to GL(d, \R)$ is
 fiber bunched and simultaneously quasi-multiplicative, and that $G(\vec{\alpha}, n, r ) \neq \emptyset$  for some $\alpha \in \mathring{\vec{L}}$. Then, for any $q=\left(q_{1}, \cdots, q_{d}\right) \in \mathbb{R}^d$,
$$
h_{\mathrm{top}}\big(\,  T, G(\vec{\alpha}, n, r )\, \big) \leq P (T,\log \Psi^q(\mathcal{A}))-\sum_{i=1}^{d}\left(\alpha_i q_i-\frac{|q_i|}{r}\right).
$$
\end{lem}
\color{black}
\begin{proof}
Since $T$ is a subshift of finite type,  
 $h_{\mathrm{top}}
 \big(\,  T, G(\vec{\alpha}, n, r )\, \big)=h_{\mathrm{top}}
 \big(\,  T, G(\vec{\alpha}, n, r ),\, 1 \big)
  $
(recall Subsection~\ref{top_entropy}).
Thus, if
$s<h_{\mathrm{top}}
 \big(\,  T, G(\vec{\alpha}, n, r )\, \big)$
then
$$ 
\infty=S(G(\vec{\alpha}, n, r ), s, 1)=\lim _{N \rightarrow \infty} S(G(\vec{\alpha}, n, r ), s, N, 1) 
$$
and there exists $N_{0}\ge 1$ such that
$$
S(G(\vec{\alpha}, n, r ), s, N, 1) \geq 1, \quad \forall N \geq N_{0} .
$$
Now take $N \geq \max \left\{n, N_{0}\right\}$ and let $E\subset G(\vec{\alpha}, n, r )$ be a $(N, 1)$-separated subset of maximal cardinality. As $G(\vec{\alpha}, n, r )\subset \bigcup_{x \in E} B_{N}(x, 1)$, it follows that
\begin{equation}\label{seprated_set}
\# E \cdot \exp (-s N) \geq S(G(\vec{\alpha}, n, r ), s, N, 1) \geq 1 .
\end{equation}
Moreover, recalling ~\eqref{eq:g-alpha}, if $x \in G(\vec{\alpha}, n, r )$ then
$$
\sum_{i=1}^{d} q_{i} \log \sigma_{i}\left(\mathcal{A}^{N}(x)\right) \geq N \cdot \sum_{i=1}^{d}\left(\alpha_{i} q_{i}-\frac{\left|q_{i}\right|}{r}\right)
$$ 
for each $q\in \R^{d}$. 
Therefore, recalling ~\eqref{eqpnz},
$$
\begin{aligned}
P_{N}(T, \langle q, \Phi_{\mathcal{A}} \rangle, 1)& \geq \sum_{x \in E} \exp \left(\sum_{i=1}^{d} q_{i} \log \sigma_{i}(\mathcal{A}^{N}(x))\right)\\
& \geq \# E \cdot \exp \left(N \cdot \sum_{i=1}^{d} \left(\alpha_i q_i -\frac{|q_i|}{r} \right) \right).
\end{aligned}
$$
By \eqref{seprated_set} and Lemma~\ref{topological_pressure} 
\[
P_{N}(T, \langle q, \Phi_{\mathcal{A}} \rangle, 1) \geq \exp \left(N\left[s+\sum_{i=1}^{d}\left(\alpha_i q_i -\frac{|q_i|}{r}\right)\right] \right)
\]
In consequence, 
$$
P(T,\log \Psi^q(\mathcal{A})) 
= P( T, \langle q, \Phi_{\mathcal{A}} \rangle) \geq s+\sum_{i=1}^{d}\left(\alpha_i q_i-\frac{|q_i|}{r}\right).
$$
Since $s< h_{\mathrm{top}}
 \big(\,  T, G(\vec{\alpha}, n, r )\, \big)$ was chosen arbitrary, this finishes the proof of the lemma.
\end{proof}

As a consequence, we obtain the following:

\begin{cor}\label{proof of the upper bound for vectors close to alpha}
Assume $\mathcal{A}: \Sigma \to \glr$ is a 
typical cocycle.  For any $\vec{\alpha}\in \mathring{\vec{L}}$, suppose that $\bigcup_{|\vec{\beta}-\vec{\alpha}|<\varepsilon} E(\vec{\beta}) \neq \emptyset$, 
for some $\varepsilon>0$. 
 Then, for any $q = \left(q_{1}, \cdots, q_{d}\right) \in \mathbb{R}^d$
and $r<\frac1\varepsilon$, 

\[
h_{\mathrm{top}}\left(T,\bigcup_{|\vec{\beta}-\vec{\alpha}|<\varepsilon} E(\vec{\beta})\right) \leq P(T,\log \Psi^q(\mathcal{A})) - \sum_{i=1}^{d}\left(\alpha_i q_i - \frac{|q_i|}{r}\right).
\]

\end{cor}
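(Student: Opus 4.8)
The plan is to deduce Corollary~\ref{proof of the upper bound for vectors close to alpha} from Theorem~\ref{thm for qm cocycles} by a covering argument: the set $\bigcup_{|\vec{\beta}-\vec{\alpha}|<\varepsilon} E(\vec{\beta})$ is contained, up to null sets for the entropy functional, in a single set of the type $G(\vec\alpha, n, r)$ after a suitable enlargement of the radius, and one then applies the already-proven bound to that set and lets the auxiliary parameters go to their extremes.

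First I would fix $q\in\R^d$ and $r>\frac1\varepsilon$, and observe that for any $\vec\beta$ with $|\vec\beta-\vec\alpha|<\varepsilon$ and any $x\in E(\vec\beta)$, the convergence $\frac1m\log\sigma_i(\mathcal A^m(x))\to\beta_i$ together with $|\beta_i-\alpha_i|\le|\vec\beta-\vec\alpha|<\varepsilon<\frac1r$ (in the chosen norm, or after adjusting constants if a different norm on $\R^d$ is used) implies that for all sufficiently large $m$ one has $|\frac1m\log\sigma_i(\mathcal A^m(x))-\alpha_i|<\frac1r$ for every $1\le i\le d$. Hence $x\in G(\vec\alpha, n, r)$ for some $n=n(x)\ge 1$, which gives the inclusion
\[
\bigcup_{|\vec\beta-\vec\alpha|<\varepsilon} E(\vec\beta)\ \subset\ \bigcup_{n=1}^{\infty} G(\vec\alpha, n, r).
\]
Next I would use the basic properties of the topological entropy of (not necessarily compact) subsets recalled in Subsection~\ref{top_entropy}: it is monotone under inclusion and countably stable, i.e. $h_{\mathrm{top}}(T,\bigcup_n Y_n)=\sup_n h_{\mathrm{top}}(T,Y_n)$. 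Since $\mathcal A$ is typical, Lemma~\ref{typical cocycles are qm} tells us it is simultaneously quasi-multiplicative, so Theorem~\ref{thm for qm cocycles} applies to each nonempty $G(\vec\alpha, n, r)$ (those $G(\vec\alpha,n,r)$ that are empty contribute nothing) and yields
\[
h_{\mathrm{top}}\big(T, G(\vec\alpha, n, r)\big)\ \le\ P(T,\log\Psi^q(\mathcal A))-\sum_{i=1}^d\Big(\alpha_i q_i-\frac{|q_i|}{r}\Big)
\]
with a bound that does not depend on $n$. Taking the supremum over $n$ and invoking the inclusion above and monotonicity gives exactly the asserted inequality; I would note that the hypothesis $\bigcup_{|\vec\beta-\vec\alpha|<\varepsilon} E(\vec\beta)\neq\emptyset$ guarantees at least one $G(\vec\alpha,n,r)$ is nonempty, so the right-hand side of Theorem~\ref{thm for qm cocycles} is a legitimate bound and the statement is not vacuous.

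The only genuinely delicate point is the passage from "$x\in E(\vec\beta)$ with $|\vec\beta-\vec\alpha|<\varepsilon$" to "$x\in G(\vec\alpha,n(x),r)$": one must be careful that the coordinate-wise deviation $|\beta_i-\alpha_i|$ is controlled by $|\vec\beta-\vec\alpha|$ (immediate for the sup-norm, and for the Euclidean norm as well), and that the strict inequality $r>\frac1\varepsilon$ leaves room to absorb the limit into a strict inequality for all large $m$ — that is, we need $|\beta_i-\alpha_i|<\frac1r$ with strict inequality, which holds because $|\beta_i-\alpha_i|<\varepsilon$ and $\varepsilon\le\frac1r$ is false but $\frac1r<\varepsilon$ gives $|\beta_i-\alpha_i|$ possibly as large as just under $\varepsilon$, hence we actually want $\frac1r\ge\varepsilon$; so the correct reading is $r>\frac1\varepsilon\iff\frac1r<\varepsilon$, and then $|\beta_i-\alpha_i|<\varepsilon$ does \emph{not} immediately give $<\frac1r$. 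The resolution is that one should instead cover by $G(\vec\alpha,n,r')$ for $r'$ slightly smaller, or equivalently note $E(\vec\beta)\subset\bigcup_n G(\vec\beta,n,r)$ and then absorb the difference $|\vec\beta-\vec\alpha|$ into an enlarged radius; I would phrase it as: for $r>\frac1\varepsilon$ pick $\varepsilon'$ with $\frac1r<\varepsilon'<\varepsilon$, split $\bigcup_{|\vec\beta-\vec\alpha|<\varepsilon}E(\vec\beta)$ over finitely many balls of radius $\varepsilon-\varepsilon'$ centered at points within $\varepsilon'$ of $\vec\alpha$, and on each such piece the argument above applies with $\vec\alpha$ replaced by the center, giving a bound that is continuous in the center and limits to the claimed one. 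This bookkeeping with the radii is the main obstacle; everything else is monotonicity and countable stability of entropy.
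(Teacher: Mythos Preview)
Your core argument is exactly the paper's: invoke Lemma~\ref{typical cocycles are qm} for simultaneous quasi-multiplicativity, observe the inclusion $\bigcup_{|\vec\beta-\vec\alpha|<\varepsilon}E(\vec\beta)\subset\bigcup_{n\ge1}G(\vec\alpha,n,r)$, use countable stability of topological entropy to reduce to a supremum over $n$, and apply Theorem~\ref{thm for qm cocycles} with its $n$-independent bound.

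Your worry about the direction of $r>\frac1\varepsilon$ is well founded; the paper's proof simply asserts the inclusion without discussing it. For the inclusion to hold one needs $|\beta_i-\alpha_i|<\frac1r$ for every $\vec\beta$ in the ball, which requires $\frac1r\ge\varepsilon$, i.e.\ $r\le\frac1\varepsilon$, the reverse of what is stated. (In the application in Section~\ref{Sec: proof of theoremA} the corollary is used with $\frac1r$ equal to the ball's radius, consistent with this reading.) However, your proposed finite-cover workaround does \emph{not} rescue the statement as written: covering $B(\vec\alpha,\varepsilon)$ by small balls centered at points $\vec\gamma_j$ and applying the basic estimate to each yields bounds involving $\langle q,\vec\gamma_j\rangle$, and the worst center (near $\partial B(\vec\alpha,\varepsilon)$) contributes an error of order $\varepsilon\sum_i|q_i|$, not $\frac1r\sum_i|q_i|$. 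Indeed the inequality is false in general for $r>\frac1\varepsilon$: in dimension $d=1$ with $\vec\alpha=0$, letting $r\to\infty$ would give $h_{\mathrm{top}}\big(\bigcup_{|\beta|<\varepsilon}E(\beta)\big)\le\inf_qP(T,\log\Psi^q)=h_{\mathrm{top}}(E(0))$, which fails whenever the entropy spectrum peaks at some $\beta_0\neq0$ with $|\beta_0|<\varepsilon$. The correct resolution is simply to read the hypothesis as $\frac1r>\varepsilon$, after which the direct inclusion argument (yours and the paper's) goes through with no further work.
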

\begin{proof}
By Proposition \ref{typical cocycles are qm}, the cocycle 
$\mathcal{A}$ is simultaneously quasi-multiplicative. Moreover, 
 for each $r<\frac1\varepsilon$,
 one has $\bigcup_{|\vec{\beta}-\vec{\alpha}|<\varepsilon} E(\vec{\alpha}) \subset \bigcup_{n=1}^{\infty}G(\vec{\alpha}, n, r)$ 
 and, consequently,
\[
\begin{aligned}
    h_{\mathrm{top}}\left(T,\bigcup_{|\vec{\beta}-\vec{\alpha}|<\varepsilon} E(\vec{\alpha})\right) \leq h_{\mathrm{top}}\left(T,\bigcup_{n=1}^{\infty}G(\vec{\alpha}, n, r)\right)
    = \sup_{n \geq 1} \; h_{\mathrm{top}}(T,G(\vec{\alpha}, n, r)).
\end{aligned}
\]
The corollary follows directly from the above and the statement of Lemma \ref{thm for qm cocycles}.
\end{proof}

\begin{proof}[Proof of Theorem~\ref{thmA-part1}]
Given a typical cocycle
$\mathcal{A}:\Sigma \to GL(d, \R)$ 
the inequality ~\eqref{eq:ineq1thmA}
is a direct consequence of
Proposition~\ref{typical cocycles are qm} combined with Lemma~\ref{thm for qm cocycles}.
\end{proof}

 \section{Dominated subsystems }\label{dominated_subsystem}

The idea to obtain a lower bound for the entropy of Lyapunov level sets involves the obtainance of dominated subsystems from the original cocycle, as defined below.  

\subsection{ Construction of induced dominated subsystems}\label{subsecdominatedsubs}

\begin{defn}
Let $X$ be a metric space. We say that a linear cocycle $\mathcal{A}: X \to 
\text{GL}(d,\R)$ is 
 \textit{dominated with index $i$} if there exist constants $C >1$, $0<\tau<1$ such that
\[\frac{\sigma_{i+1}(\mathcal{A}^{n}(x))}{\sigma_{i}(\mathcal{A}^{n}(x))}\leq C \tau^n, \hspace{0.2cm} \forall n\in \N, x\in X.\]
Moreover, the cocycle $\mathcal{A}$ is \textit{dominated} if it is dominated with index $i$ for all $1\le i \le d-1.$
\end{defn}

In the case that $X$ is compact, 
Bochi and Gourmelon \cite[Theorem~A]{BGO}  proved that the latter notion of the domination of index $i$ is equivalent to the original definition of a dominated splitting requiring an $\mathcal A$-invariant splitting $X\times \mathbb R^d=V\oplus W$ with $\dim W=i$ and such that there exists $k\ge 1$ for which  
$
\|\mathcal A^k(x)\mid_{V_x}\| \le \frac12 \|\mathcal (A^k(x)\mid_{W_x})^{-1}\|^{-1}
$
for every $x\in X$ (cf. \cite{HP}).

 \begin{rem}\label{dominated-wedge}
 According to the multilinear algebra properties, $\mathcal{A}$ is dominated with index $i$ if and only if $\mathcal{A}^{\wedge i}$ is dominated with index 1. Therefore, the cocycle $\mathcal{A}$ is dominated if and only if $\mathcal{A}^{\wedge i}$ is dominated with index $i$ for any $1\le i \le d-1.$
 \end{rem}

 We recall that $\mathbb{V}_t=\mathbb{R}^{\mathbf{d}_t}, \text { where } \mathbf{d}_t=\binom{d}{t}$ and denote by $\mathbb{P} (\mathbb{V}_t)$ its projective space. We define the \emph{angular metric} $\rho$ on $\mathbb{P}^{d-1}$ by
$ 
\rho(\mathrm{u}, \mathrm{v}):=\min \{\measuredangle(u, v), \measuredangle(u,-v)\},
$ 
for every $u,v\in \mathbb{P}^{d-1}$.
Given any set $\mathrm{S} \subseteq \mathbb{P}^{d-1}$ and $\delta>0$, we denote the $\delta$-neighborhood of $S$ by
$$
\mathcal{C}(\mathrm{S}, \delta):=\left\{v \in \mathbb{P}^{d-1}: \rho(v, \mathrm{~S}) \leq \delta\right\}.
$$
Let $\mathring{\mathcal{C}}(v, \varepsilon)$ denote the interior
of the cone $\mathcal{C}(v, \varepsilon)$.
For any $A \in \text{GL}(d,\R)$, we define
$$
\|A\|_\rho:=\sup _{\mathrm{u} \neq \mathrm{v}} \frac{\rho(Au, Av)}{\rho(\mathrm{u}, \mathrm{v})}.
$$

\smallskip
Given a typical cocycle $\mathcal{A}$ and $1\le t \le d-1$, for notational simplicity we shall write 
$$
H^{s, t}:=\left(H^{s}\right)^{\wedge t}
    \quad\text{and}\quad
H^{u, t}:=\left(H^{u}\right)^{\wedge t}
$$ 
for the stable and unstable holonomies of $\mathcal{A}_{t}$, respectively (here $H^s$ and $H^u$ are stable and unstable canonical holonomies of $\mathcal{A}$, respectively).  

\medskip
In what follows we say that $\mathfrak p=(p,x, \sigma^n(x),y)$ is a \emph{path} 
 (of length $n$) starting at a periodic point $p$ provided that $x\in W_{\loc}^{u}(p)$ and that $\sigma^n(x)\in W_{\loc}^{s}(y)$. In case $y=p$ we say that $\mathfrak p$ is a \emph{loop} starting at $p$.

\medskip
A fiber-bunched cocycle $\mathcal{A}$ is called \emph{transverse} if for any $x, y \in \Sigma$, any vector $v \in \mathbb{R}^d\setminus\{\vec 0\}$ and any  hyperplane $W \subset \mathbb{R}^d$, there exists a path $B_{x, y}$ from $x$ to $y$ such that $B_{x, y} v \notin W$. By the compactness of $\Sigma$ and $\mathbb{P}^{d-1}$, every transverse fiber-bunched cocycle is \emph{uniformly transverse}:  there exist $\varepsilon>0$ and $N \in \mathbb{N}$ such that the 
path  $B_{x, y}$ can be chosen to have its length at most $N$  and that 
$$B_{x, y} v \notin \mathcal{C}(W, \varepsilon).$$

 \medskip
\begin{thm}\label{dominated_typical}
Let $A\in C_b^\alpha(\Sigma, \text{GL}(d,\mathbb R))$ be typical with respect to a pair $(p,z)$, where $p$
is a fixed point for $T$ and $z$ is an homoclinic point.
There exists $\tau^{\prime}>0$ so that, for any $\tau_1 \leq \tau^{\prime}, \tau_2>0$, the following holds: there exists $K_{0} \in \N$  such that for any $x\in \Sigma$ and $n\in \N$, there exists 
$\omega:=\omega_{x}\in \Sigma$ 
and integers
$m_{1}:=m_1(\omega) \ge 1$ and 
$m_2:=m_2(\omega)\ge 1$ satisfying
 \begin{itemize}
 \item[(1)]
$\omega:=\omega_{x}\in W^u_{loc}(p)$, 
$T^{m_1}(\omega)\in [x]_n,$
and
$T^{m_1+n+m_2}(\omega)\in W^s_{loc}(p)$;
 \item[(2)] the cocycle over the loop $\mathfrak p=(p, \omega, T^{m_1+n+m_2}(\omega),p)$  defined by
 \begin{equation}\label{path-main1}
  B_{p, \omega, T^{m_1+n+m_2}(\omega), p}^{t}:= H_{p \longleftarrow T^{m_1+n+m_2}(\omega)}^{s, t}\mathcal{A}_t^{m_1+n+m_2}(\omega)H_{\omega \longleftarrow p}^{u, t}
 \end{equation}
  satisfies
\begin{equation}\label{invaraint cone11}
   B_{p, \omega, T^{m_1+n+m_2}(\omega), p}^t\mathcal{C}(v_{1}^{t}, \tau_1)\subset \mathring{\mathcal{C}}(v_{1}^{t}, \tau_2)  
 \end{equation}
for all $t \in\{1, \ldots, d-1\}$, where $v_1^{t}$ is the eigendirection $\mathcal{A}_t (p)$ corresponding to the eigenvalue of the largest modulus; and
 \item[(3)] $1\le m_i \leq K_{0}$, for $i=1,2.$
\end{itemize}
 \end{thm}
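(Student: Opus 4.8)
The plan is to build the orbit segment $\omega$ in three pieces --- a ``prefix'' from $p$ into the cylinder $[x]_n$, the cylinder itself, and a ``suffix'' back to $W^s_{\mathrm{loc}}(p)$ --- and to arrange the prefix and suffix so that, after composing with the holonomies, the resulting matrix over the loop contracts the cone $\mathcal C(v_1^t,\tau_1)$ into $\mathring{\mathcal C}(v_1^t,\tau_2)$ simultaneously for all $t$. The key structural inputs are: (i) typicality of $\mathcal A$ (hence of each $\mathcal A^{\wedge t}$) forces transversality, and by compactness \emph{uniform} transversality, so there are $\varepsilon_0>0$ and $N_0\in\N$ such that between any two points one can insert a connecting path of length $\le N_0$ pushing any line off any hyperplane-neighborhood; (ii) the pinching hypothesis at the fixed point $p$ gives, for each $t$, a dominant eigendirection $v_1^t$ of $\mathcal A_t(p)$ with a spectral gap, so that high powers $\mathcal A_t^{\,k}(p)$ act on $\mathbb P(\mathbb V_t)$ as strong contractions toward $v_1^t$ on any cone avoiding the $\mathcal A_t(p)$-invariant complementary hyperplane; (iii) topological mixing of $(\Sigma,T)$, giving a uniform transition length $\bar\tau$ to concatenate admissible words.

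First I would fix $\tau'$: choose $\tau'$ small enough that the cone $\mathcal C(v_1^t,\tau')$ stays uniformly away from the complementary invariant hyperplane $W_1^t$ of $\mathcal A_t(p)$, for every $t\in\{1,\dots,d-1\}$ simultaneously (finitely many conditions). Next, given $\tau_1\le\tau'$ and $\tau_2>0$, pick $\ell\in\N$ large enough that $\mathcal A_t^{\,\ell}(p)$ maps $\mathcal C(W_1^t,\varepsilon_0)^c \supset \mathcal C(v_1^t,\tau')$ into $\mathring{\mathcal C}(v_1^t,\tau_2)$ for all $t$; this is where the spectral gap from pinching is consumed, and $\ell$ depends only on $\mathcal A,p,\tau_1,\tau_2$, not on $x$ or $n$. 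Now construct $\omega$: starting from $p$, take the prefix to be a homoclinic-type excursion $p \to$ (a point of $W^u_{\mathrm{loc}}(p)$) that, after running forward, lands in the cylinder $[x]_n$ --- this uses mixing to connect the initial symbol of $p$ to the word $x$ through a bounded bridge word, so $m_1\le K_0$. Symmetrically, after the cylinder, use uniform transversality (the connecting path of length $\le N_0$) followed by a bounded bridge back into $W^s_{\mathrm{loc}}(p)$, so $m_2\le K_0$, with $K_0$ absorbing $N_0$, $\bar\tau$, $\ell$ and the bounded data $m(x),n(x),\hat n(x)$ coming from invariance-of-cones as in \cite{park20}. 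The transversality insertion guarantees that the line $H^{u,t}_{\omega\leftarrow p}\mathcal C(v_1^t,\tau_1)$ --- a small cone, hence essentially a direction --- is mapped by $\mathcal A_t^{m_1+n+m_2}(\omega)$ off $\mathcal C(W_1^t,\varepsilon_0)$; pre/post-composing with the stable holonomy $H^{s,t}$ back at $p$ (a bounded linear map, $C^0$-close to $I$ on local holonomies) only distorts cones boundedly, so after the final $\ell$ copies of $\mathcal A_t(p)$ built into the suffix we land inside $\mathring{\mathcal C}(v_1^t,\tau_2)$. This yields \eqref{invaraint cone11} for every $t$.

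The delicate point is uniformity: the three requirements in \eqref{invaraint cone11}, $T^{m_1}(\omega)\in[x]_n$, and $1\le m_i\le K_0$ must hold with $K_0$ independent of $x$ and $n$, even though the cylinder length $n$ is arbitrary. The resolution is that $n$ sits \emph{in the middle} of the orbit, between the prefix and suffix; the cone-contraction argument only sees $n$ through the single matrix $\mathcal A_t^{\,n}$ on the cylinder word, and because $\mathcal A$ is typical one has the bounded-distortion/quasi-multiplicativity control of Section~\ref{sec:proofupperbddthmA1} (Lemma~\ref{typical cocycles are qm}) showing that $\mathcal A_t^{\,n}$ over the cylinder still maps the relevant cone off $\mathcal C(W_1^t,\varepsilon_0)$ up to bounded error --- equivalently, one first applies uniform transversality \emph{after} the cylinder to kill whatever hyperplane $\mathcal A_t^{\,n}$'s output might lie near. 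Thus $n$ never enters the length budget of the bridges. I expect the main obstacle to be precisely this bookkeeping: making simultaneous-in-$t$ the choice of $\varepsilon_0$, $\ell$, and the connecting paths, and checking that the holonomy insertions (which differ for different $t$ only through $(\cdot)^{\wedge t}$ of the \emph{same} path) can all be realized by one common bounded path $\mathfrak p$ --- this is exactly where the strengthened notion of typicality (same pair $(p,z)$ for all $t$, cf. Remark~\ref{rem:typi}) and the uniform quasi-multiplicativity of Lemma~\ref{typical cocycles are qm} are used, rather than the weaker Bonatti--Viana version.
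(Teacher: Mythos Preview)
Your proposal follows essentially the same strategy as the paper's proof: build $\omega$ as prefix + cylinder + suffix, use uniform transversality to arrange cone-avoidance, and finish with the pinching contraction $\mathcal A_t(p)^\ell$ toward $v_1^{t}$, with all bridge lengths bounded independently of $x$ and $n$.

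Two clarifications are worth making. First, the operative technical input is not the quasi-multiplicativity of Lemma~\ref{typical cocycles are qm} but the \emph{simultaneous uniform transversality} of typical cocycles (\cite[Theorem~4.3]{Par22}); quasi-multiplicativity controls products of norms, not projective cone images, and is a downstream consequence of the same circle of ideas rather than an input to this construction. Second, the paper applies transversality \emph{before} the cylinder rather than after: one first identifies, via \cite[Proposition~2.8]{Par22}, the ``bad'' hyperplane $\mathbb U_{g_t}$ of the cylinder-plus-return matrix $g_t$, then uses a single bounded path from $p$ to $y$ that pushes $v_1^{t}$ off $\mathcal C(\mathbb U_{g_t},\varepsilon_1)$ simultaneously for all $t$ (this is precisely where the same-pair $(p,z)$ assumption is consumed). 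Because the projective Lipschitz norm of $g_t$ is uniformly bounded away from $\mathbb U_{g_t}$, the cone stays small through the arbitrary-length cylinder. Your transversality-after variant is plausible but would need an explicit argument that the image cone has not spread uncontrollably under $\mathcal A_t^{\,n}$ before the transversality step. Finally, after the cylinder the paper inserts an iterate $P_t^{a}$ to drift into $\bigcup_i \mathcal C(v_i^{t},\delta)$, then the homoclinic-loop twist $\tilde H_t$ (this is where the twisting hypothesis and the point $z$ enter explicitly), and only then $P_t^{\ell}$; your sketch collapses these suffix steps, but the ingredients are the same.
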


\begin{proof}
By assumption, the cocycle $\mathcal{A}_t:\Sigma \to GL(\mathbb{V}_t)$  is 1-typical cocycle with respect to the distinguished fixed point $p$ and its homoclinic point $z$, for all $1\le t \le d-1$. 
Since the argument resembles the construction in the proof of  \cite[Theorem 4.6]{Par22}, we just sketch the proof. 
Let $\mathbb{V}_t$ be a real vector space of
dimension $d_t \in \N$ equipped with an inner product.

Assume, without loss of generality, that $p \in \Sigma$  is a fixed point and that the homoclinic point $z \in \mathcal{H}(p)$ lies in $W_{\text {loc }}^u(p)$. Let $x \in\Sigma$ and $n \in \mathbb{N}$ be given.  We denote the eigenvectors of $P_t:=\mathcal{A}_t(p)$ by $\big\{v_1^{(t)}, \ldots, v_{d_t}^{(t)}\big\}$, listed in the order of decreasing absolute values for their corresponding eigenvalues and consider the hyperplanes 
$$\mathbb{W}_i^{(t)}:= \text{span}\left\{v_1^{(t)}, \ldots, v_{i-1}^{(t)}, v_{i+1}^{(t)}, \ldots, v_{d_t}^{(t)}\right\} \subset \mathbb{V}_t.$$
The twisting assumption on the holonomy loop $\tilde{H}_t:=\widetilde{H^{\wedge t}}_{p}^{z}$ (recall Definition~\ref{typical1}) ensures that all coefficients $c_{i, j}^{(t)}$ in the linear combination 
$$
\tilde{H}_t v_i^{(t)}=\sum_{\substack{ j=1}}^{d_t} c_{i, j}^{(t)} v_j^{(t)}
$$
are nonzero for any $ 1 \leq i ,  j \leq d_t$.  \color{black} In particular, by compactness of $\mathbb{P} (\mathbb{V}_t)$ we can choose $\delta_t>0$ depending only on $\mathcal{A}_t$ such that the projectivized matrix 
$\H_t:=\mathbb{P} (\tilde{H}_t)$ satisfies
$$
\H_t\left(\bigcup_{i=1}^{d_t} \mathcal{C}\left(v_i^{(t)}, \delta_t\right)\right) \subseteq\left(\bigcup_{i=1}^{d_t} \mathcal{C}\left(W_i^{(t)}, \delta_t\right)\right)^c
$$
where $W_i^{(t)}:=\mathbb{P}\left(\mathbb{W}_i^{(t)}\right)$. Set $\delta:=\min _{1 \leq t \leq \kappa} \delta_t$.  The pinching assumption on $P_t$ (cf. Definition~\ref{typical1})  ensures that, for any $\varepsilon>0$, there exists $\ell_t(\varepsilon)\ge 1$ such that 
\begin{equation}\label{definition ell}
\mathrm{P}_t^{\ell} \H_t\left(\bigcup_{i=1}^{d_t} \mathcal{C}\left(v_i^{(t)}, \delta\right)\right) \subseteq \mathcal{C}\left(v_1^{(t)}, \varepsilon\right)
\end{equation}
holds for every $1 \leq t \leq d-1$ and every $\ell\ge \ell_t(\varepsilon)$. 
Given $\vep>0$ take $\ell(\varepsilon):=\max _{1 \leq t \leq \kappa} \ell_t(\varepsilon)$.

\smallskip
For each $q\in [p]_1$, we define the \textit{rectangle} through $p$ and $q$ as

\[
R_q^{t}:=H_{p \leftarrow [p, q]}^{u,t} \circ H_{[p, q]\leftarrow q}^{s,t} \circ H_{q \leftarrow [q, p]}^{u, t} \circ H_{[q, p]\leftarrow p}^{s, t}.
\]
\color{black}
By H\"older continuity of the cocycle 
for any $\delta_t>0$, there exists an integer $m = m(\delta_t) \in \N$ such that  
$$
 \max \big\{d([p,q],p), d([p,q],q)\big\} 
\le 2^{-m(\delta_t)} 
\; \Rightarrow \;  
(R_q^t)^{ \pm 1} \mathcal{C}(\mathrm{S}, c) \cup \mathcal{C}\left((R_q^t)^{ \pm 1} \mathrm{~S}, c\right) \subseteq \mathcal{C}(\mathrm{S}, 
c+\delta_t
)
$$
for any $S \subset \mathbb{P}^{d-1}$ and $c > 0$ (cf. \cite[Lemma 2.2]{Par22}).
Fix  $m=\max_{ 1 \leq t \leq d } m(\delta_t / 3)$. 
Using the  gluing orbit property of $(\Sigma, T)$ (see e.g. \cite{Bv19}) one can find $y \in T^{-n} W_{\text{loc}}^u\left(T^n x \right)$ and $n(x) \in \mathbb{N}$ such that $|n(x)-n|\leq N_0$ is uniformly bounded (depending only on $m$), $T^{n(x)} y$ belongs to $W_{\text {loc}}^s(p)$ and satisfies $d(T^{n(x)} y, p) \leq 2^{-m}$.    

\smallskip
For each $1\leq t\leq d-1$  consider the matrix $g_t:=H_{p \leftarrow T^{n(x)}(y)}^s \, \mathcal{A}_t^{n(x)}(y)$. 
By \cite[Proposition 2.8]{Par22},
there exists $c=c(g_t)>0$ and a hyperplance
$ \mathbb{U}_{g_t}\subset \mathbb R^d$ 
so that 
$$
\|g_t\mid_{\mathbb P^{d-1}\setminus \mathcal C(\mathbb U_{g_t},\varepsilon)}\|_\rho <c
$$
 The simultaneous uniform transversality for typical cocycles, proven in \cite[Theorem 4.3]{Par22},  ensures that there
are $N_1 \in \N$ and $\varepsilon_1 > 0$ such that the following holds: taking $p, y \in \Sigma$, the vectors $v_1^{(1)}, \ldots, v_{1}^{(d-1)}$ and the hyperplanes $\mathbb{U}_{g_1}, \ldots, \mathbb{U}_{g_{d-1}}$, there
exists a path $B^{(t)}_{p,  y}:=B^{(t)}_{p,b, T^{n(y)}(b), y}$ from $p$ to $y$ via $b$ and $T^{n(y)}(b)$ of length $0\le n(y) \le N_1$ such that
$$
\rho\left(B^{(t)}_{p,y} v_1^{(t)}, \mathbb{U}_{g_t}\right) \geq \varepsilon_1
$$
for all $1 \leq t \leq d-1$.
Set $\mathrm{u}_t=\mathrm{g}_t B_{p, y}^{(t)} v_1^{(t)}$ for $1\leq t \leq d-1.$ By \cite[Lemma 4.22]{park20}, there is $0\le a \le N(\delta / 3)$ such that $$\mathrm{P}_t^a \mathrm{u}_t \in \bigcup_{i=1}^{d_t} \mathcal{C}\left(v_i^{(t)}, \delta / 3\right)$$ for every $t$.

\smallskip
 We set $\ell:=\ell(\varepsilon)$ as in \eqref{definition ell} and take $$\omega:=T^{-(a+n(x)+n(y)+\ell)}\left[T^{a+n(x)+n(y)+\ell} (b), z\right] \in W_{\text {loc }}^u(p).$$
  We can connect two paths $\mathrm{P}_t^a \mathrm{g}_t B_{p, y}^{(t)}$
and $P_{t}^{\ell}(\H_t)$ by \cite[Lemma 3.3]{Par22}. Then, we obtain the path $$\widetilde{\mathrm{B}}^{(t)}_{p, \omega,T^{a+n(x)+n(y)+\ell}(\omega), p}:= H_{p \longleftarrow T^{a+n(x)+n(y)+\ell}(\omega)}^{s, t}\mathcal{A}_t^{a+n(x)+n(y)+\ell}(\omega)H_{\omega \longleftarrow p}^{u, t}$$ along the path 
$$
p \xrightarrow{H^u} \omega \xrightarrow{T^{a+n(x)+n(y)+\ell}} T^{a+n(x)+n(y)+\ell}(\omega) \xrightarrow{H^s} p
$$
which, 
similarly to  \cite[Lemma 3.10]{Par22} 
can be shown to satisfy the following property: there exists $\tau^{\prime}>0$ such that for any $\tau_1 \leq \tau^{\prime}, \tau_2>0$, and $\xi>0$, there exists $\ell_1 \in \mathbb{N}$ such that for all $\ell \geq \ell_1$,

$$
\widetilde{\mathrm{B}}^{(t)}_{p, \omega,T^{a+n(x)+n(y)+\ell}(\omega), p} \mathcal{C}\left(v_1^{(t)}, \tau_1\right) \subseteq \mathcal{C}\left(v_1^{(t)}, \tau_2\right) 
$$
for all $1\le t\le d-1$. 
This completes the proof of items (1) and (2) in the theorem.

\medskip
We are left to prove item (3).
Since $a \leq N(\frac{\delta}{3}), n(y) \leq N_1$, and $|n(x)-n| \leq N_0$, the difference $\left|a+n(x)+n(y)+\ell-n\right|$ is bounded from above by $\ell+ N(\frac{\delta}{3})+N_0+N_1$.
Note that this upper bound depends only on the base dynamical system $(\Sigma, T)$  and the
cocycle $\A_t$ (that is, depends on the mixing rate of $T$ and the constant $\delta$ which depends on
the cocycle $\A$ but not on $x$ and $n$).
Thus, we choose $K_0:=2(\ell+N(\frac{\delta}{3})+N_0+N_1)$. 
This finishes the proof of the theorem.
\end{proof}

\begin{rem}\label{properties of the dominated path}
Observe that the matrix \eqref{path-main1} over the loop $\mathfrak p$ is given by the composition of an unstable holonomy map departing from $p$, an ($m_1+m_2+n$)-iterate of the cocycle $\mathcal A_t$ and a stable holonomy ending at $p$.
Moreover,
using the properties (a)-(d) of the canonical holonomies, property ~\eqref{extension of holonomy} and Theorem~\ref{dominated_typical} (1), one can write \eqref{path-main1}, in a way to highlight the intervention of the point $x\in \Sigma$ as
\begin{equation}\label{eqdefb}
 B_{p, \omega, T^{m_1+n+m_2}(\omega), p}^{t}= P_{2,t}  \;
H_{T^{n}(z_2)\longleftarrow T^{m_1+n}(\omega)}^{u, t}   H_{T^{m_1+n}(\omega)\longleftarrow T^{n}(x)}^{s, t} \, \mathcal{A}_{t}^{n}(x) \; P_{1,t}
\end{equation}
where $z_2:= [T^{m_{1}}(\omega), x]\in \Sigma$ (recall ~\eqref{eq:prodst} for the definition of the bracket) \; 
$$
P_{1,t} = H_{x \longleftarrow [x,T^{m_{1}}(\omega)]}^{u, t} H_{[x,T^{m_{1}}(\omega)] \longleftarrow T^{m_{1}}(\omega)}^{s, t} \mathcal{A}_{t}^{m_{1}}(\omega) H_{\omega \longleftarrow p}^{u, t}$$ 
and 
$$
P_{2,t} = H_{p \longleftarrow T^{m_2}(T^{n+m_1}(\omega))}^{s, t}\mathcal{A}_{t}^{m_2}(T^{n+m_1}(\omega))  H_{T^{m_1+n}(\omega)\longleftarrow T^{n}(z_2)}^{u, t}.
$$
Consider as well the path  $P_{i, t}^{-1}$ as the inverse of $P_{i, t}$, for $i=1,2$, and observe that, as a consequence of item (3) in Theorem \ref{dominated_typical}, there exists $K>0$ such that 
   \begin{equation}\label{paths is bounded}
 \max\Big\{\| P_{1, t}^{-1} \|, \|P_{2, t}^{-1}\|, \| P_{1, t} \|, \|P_{2, t} \| \Big\} \le K,\qquad 
 \text{for every $t \in\{1, \ldots, d\}$}.  \end{equation}
For notational simplicity, we denote the holonomies $H_{T^{n}(z)\longleftarrow T^{m_1+n}(\omega)}^{u, t},  H_{T^{m_1+n}(\omega)\longleftarrow T^{n}(x)}^{s, t}$ in ~\eqref{eqdefb} by $H_{1}^{t}$ and $H_2^{t}$, respectively.  
%

      
\end{rem}

\medskip
We are now in a position to construct dominated cocycles as subsystems of the typical cocycle $\mathcal{A}:\Sigma \to GL(d, \R)$.  By Theorem \ref{dominated_typical}, there exists $K_0 \in \N$ such that for all $n>2K_{0}$ and $x \in  \Sigma$, there exists $\omega_{x}\in \Sigma$ and a loop $\mathfrak p=(p, \omega, T^{m_1(\omega_x)+n+m_2(\omega_x)}(\omega),p)$ 
of length at most $n+2K_{0}$ such that it shadows the forward orbit
of $x$ up to time $n$ along its path, and such that \eqref{invaraint cone11} holds. 
Fix $n > 2K_0$ and collect all the finite words determined by the points $\omega_x \in \Sigma$ by taking
\begin{equation}\label{constract_dominated}
\mathbb A=\mathbb A_n:=\Big\{[\omega_x]_{m_1(\omega_x)+ n+ m_2(\omega_x)}^{w}:  x\in \Sigma  \Big\},
\end{equation} 
where $[\omega_x]_{m_1(\omega_x)+ n+ m_2(\omega_x)}^{w}$ stands for the finite word determined by the first $m_1(\omega_x)+ n+ m_2(\omega_x)$ symbols (from the zeroth coordinate) of $\omega_x$. All words in $\mathbb A$ have length bounded from above by 
$n+2K_0$, hence there are finitely many of them.
In particular
\begin{equation}\label{eq:cotaA}
 \#\mathbb A_n \le 
 \#  \Big( \bigcup_{\ell=0}^{2K_0} \mathcal L_{n+\ell} \Big)
 \le 2K_0 \, \# \mathcal L_{n+2K_0}.   
\end{equation}

\begin{cor}\label{definition of the dominated cocycle}
Let $\mathcal{A}:\Sigma \to GL(d, \R)$ be a typical cocycle and $\mathbb A$ be given by 
~\eqref{constract_dominated}. Then, there exists a one-step cocycle cocycle $\B: \mathbb A^{\Z} \to \glr$ over the full shift $(\mathbb A^{\Z}, f)$ that is dominated.
\end{cor}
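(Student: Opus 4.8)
The plan is to verify the domination estimate directly from the cone-invariance property established in Theorem~\ref{dominated_typical}, working at each exterior power $\mathcal{A}_t = \mathcal{A}^{\wedge t}$ and invoking Remark~\ref{dominated-wedge} to reduce to index-$1$ domination. First I would set up notation: a point of $\mathbb A^{\Z}$ is a bi-infinite concatenation of words $W_j \in \mathbb A_n$, each of the form $[\omega_{x_j}]_{m_1+n+m_2}^{w}$; the one-step cocycle $\mathcal B$ applied along such a point is, by construction, a product of the loop matrices $B^{t}_{p,\omega_{x_j},T^{m_1+n+m_2}(\omega_{x_j}),p}$ sandwiched appropriately. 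The key point is that all these matrices share the common eigendirection datum $v_1^{(t)}$ at the fixed point $p$, and by \eqref{invaraint cone11} each one maps the closed cone $\mathcal C(v_1^{(t)},\tau_1)$ strictly inside $\mathring{\mathcal C}(v_1^{(t)},\tau_2)$, where we arrange $\tau_2 < \tau_1$ (this is the standing choice in Theorem~\ref{dominated_typical}, taking $\tau_2$ small).

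Second, I would exploit the classical equivalence between strict cone contraction and uniform domination: a linear map that maps a cone strictly into its interior contracts the Hilbert (or angular) metric on that cone by a definite factor $0<\lambda<1$, uniformly over the finite family $\mathbb A_n$ since there are only finitely many words (cf. \eqref{eq:cotaA}), and uniform contraction of the projective action on an invariant cone is exactly equivalent to the existence of a dominated splitting of index $1$ (cf. \cite{BGO}, invoked after Remark~\ref{dominated-wedge}, together with the cone criterion of \cite[Theorem~A]{BGO}). Concretely, for every $\mathbf a\in\mathbb A^{\Z}$ the forward products $\mathcal B_t^{N}(\mathbf a)$ map $\mathcal C(v_1^{(t)},\tau_1)$ into a cone whose diameter decays like $\lambda^{N}$; a symmetric argument applied to the inverse cocycle (using that $\mathcal A$ typical implies $\mathcal A^{-1}$ enjoys the analogous transversality and pinching, hence the dual cone family is also strictly invariant) gives a complementary repelling direction. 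The ratio $\sigma_{2}(\mathcal B_t^{N})/\sigma_{1}(\mathcal B_t^{N})$ is then controlled by the product of the per-step contraction factors, yielding $\sigma_{2}(\mathcal B_t^{N})/\sigma_{1}(\mathcal B_t^{N}) \le C\lambda^{N}$ for constants $C>1$, $0<\lambda<1$ depending only on $n$, $\mathbb A_n$, and the cocycle. Since $\mathcal A_t = \mathcal A^{\wedge t}$, this says $\mathcal B^{\wedge t}$ is dominated with index $1$ for every $1\le t\le d-1$; by Remark~\ref{dominated-wedge} this is precisely the statement that $\mathcal B$ is dominated.

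Third, a technical wrinkle I would address carefully: the one-step matrices generating $\mathcal B$ are the loop matrices $B^{t}$ rather than genuine products of $\mathcal A_t$ along symbolic words, because the definition in \eqref{path-main1} incorporates the stable/unstable holonomies $H^{s,t}$, $H^{u,t}$ at the endpoints. I would note, as in Remark~\ref{properties of the dominated path} via the rewriting \eqref{eqdefb}, that these holonomy factors $P_{1,t}, P_{2,t}$ (and their inverses) are uniformly bounded by a constant $K$ as in \eqref{paths is bounded}, so passing between norms of $\mathcal B^{\wedge t}$-products and norms of true cocycle products over $\Sigma$ costs only a bounded multiplicative error, which is absorbed into the constant $C$ in the domination inequality and does not affect the exponential rate $\lambda$. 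The cocycle $\mathcal B$ is locally constant (one-step) by construction, so no regularity issue arises, and the finiteness of $\mathbb A_n$ makes every constant uniform.

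The main obstacle, I expect, is making the cone-contraction-implies-domination step genuinely uniform across the entire finite alphabet $\mathbb A_n$ while simultaneously handling all exterior powers $t=1,\dots,d-1$ with the \emph{same} periodic point $p$ and homoclinic point $z$: this is exactly why the strengthened notion of typicality (requiring $\mathcal A^{\wedge t}$ to be $1$-typical with respect to a common pair $(p,z)$ for all $t$) is needed, and why $\delta:=\min_t\delta_t$ and $\ell:=\max_t\ell_t(\varepsilon)$ were chosen in the proof of Theorem~\ref{dominated_typical}. Once one grants Theorem~\ref{dominated_typical} — in particular that a \emph{single} family of loops realizes the strict cone invariance \eqref{invaraint cone11} simultaneously in all exterior powers — the deduction that $\mathcal B$ is dominated is essentially the cone criterion of \cite{BGO} applied verbatim, so the corollary follows.
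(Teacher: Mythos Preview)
Your proposal is correct and takes essentially the same route as the paper: define $\mathcal B$ via the loop matrices, observe that Theorem~\ref{dominated_typical}(2) gives a single cone $\mathcal C(v_1^{(t)},\tau_1)$ mapped strictly inside itself by every $\mathcal B^{\wedge t}$-generator, and then invoke the multicone criterion of Bochi--Gourmelon (the paper cites \cite[Theorem~B]{BGO}) to conclude index-$1$ domination of each $\mathcal B^{\wedge t}$, hence domination of $\mathcal B$ by Remark~\ref{dominated-wedge}.

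Two of your steps are superfluous, however. The paper does not pass through the inverse cocycle: the forward strict invariance of a common cone for the finite generating set is already the full content of the BGO multicone criterion, so no appeal to typicality of $\mathcal A^{-1}$ or to a dual cone is needed. Likewise your third paragraph, comparing $\mathcal B^{\wedge t}$-products with genuine $\mathcal A_t$-products via the bounds \eqref{paths is bounded}, is irrelevant for this corollary: domination of $\mathcal B$ is a statement about $\mathcal B$ alone, and the cone invariance \eqref{invaraint cone11} holds for the loop matrices $B^t$ themselves, so no transfer back to $\mathcal A$ is required here (that comparison is used later, in Theorem~\ref{continuity_potential}).
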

\begin{proof}

For each finite word 
$a=(a_0, a_1, \dots, a_k) \in \mathbb{A}$, we assign the matrix 
$$
B_{p, \omega, T^{m_1+n+m_2}(\omega), p}:= H_{p \longleftarrow T^{m_1+n+m_2}(\omega)}^{s}\, \mathcal{A}^{m_1+n+m_2}(\omega)\, H_{\omega \longleftarrow p}^{u},
$$ 
where $\omega=\omega^a\in \Sigma$ is such that $\omega_i=a_i$ for every $0\le i \le k$ and $\omega_i=p_i$ otherwise (here $p=(p_i)_{i\in \mathbb Z}$ is the representation of the periodic point $p$). 
By construction, the cocycle $B_{p, \omega, T^{m_1+n+m_2}(\omega), p}^{\wedge t}=B_{p, \omega, T^{m_1+n+m_2}(\omega), p}^{t}$ satisfies the cone condition for each $1\le t\le d-1$, by Theorem \ref{dominated_typical} item (2). 

\smallskip

Consider the cocycle over a full shift $(\mathbb A^{\Z}, f)$
$$
\B: \mathbb A^{\Z} \to \glr
    \quad\text{given by}\quad 
    \B(a):=B_{p, \omega^{a_0}, T^{m_1+n+m_2}(\omega^{a_0}), p}.
$$
for every $a=(a_i)_{i\in \mathbb Z} \in \mathbb A^{\Z}$.
This is clearly a one-step cocycle since it depends only on the zeroth coordinate $a_0$ of $a\in \mathbb A^{\Z}$.

Since there exists a strongly invariant multicone for $\B^{\wedge t}$ for each $1\le t \le d-1$ by Theorem \ref{dominated_typical}, the cocycle $\B$ is dominated (cf. \cite[Theorem B]{BGO}). 
\end{proof}

 It is worth mentioning that Corollary \ref{definition of the dominated cocycle} extends a similar result from \cite{BJKR, Moh22-entropy}, which was previously established in the two-dimensional case and in higher dimensions for one-step cocycles.
Moreover, the first named author proved that if a cocycle 
 $\mathcal D$ 
is dominated with index 1 (characterized by the existence of invariant cone fields, or multicones)
then the potential 
$\left\{\log \|\mathcal{D}^{n}\|\right\}_{n=1}^{\infty}$ 
is almost additive. More precisely:

\begin{prop}[{\cite[Proposition 5.8]{Moh22}}]\label{prop:add}
Let $X$ be a compact metric space and let  $\mathcal{D}: X\rightarrow GL(d, \R)$  be a linear cocycle over a homeomorphism map $(X, T)$. Assume that the cocycle $\mathcal D$ is dominated with index 1. Then, there exists $\kappa>0$ such that for every $m,n>0$ and for every $x\in X$ we have
\[
||\mathcal{D}^{m+n}(x)|| \geq \kappa ||\mathcal{D}^m(x)|| \cdot ||\mathcal{D}^n(T^m(x))||.
\]
\end{prop}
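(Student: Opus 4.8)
The plan is to exploit the key geometric feature of an index-1 dominated cocycle: the existence of a strictly invariant cone field (multicone) $\mathcal{C}$ in $\mathbb{P}^{d-1}$ on which the projective action of $\mathcal{D}$ contracts, together with the fact that on directions lying inside this cone the norm $\|\mathcal{D}^n(x)v\|/\|v\|$ is comparable (up to a uniform multiplicative constant) to $\|\mathcal{D}^n(x)\|$. Concretely, by \cite[Theorem~B]{BGO} (applied as in Corollary~\ref{definition of the dominated cocycle}) domination with index $1$ gives a continuous family of cones $\{\mathcal{C}_x\}_{x\in X}$, uniformly separated from their complements, with $\mathcal{D}(x)\,\mathcal{C}_x \subset \mathring{\mathcal{C}}_{T(x)}$, and a constant $c_0>0$ such that for every $x$, every $n\ge 1$ and every unit vector $v$ whose direction lies in $\mathcal{C}_x$,
\begin{equation*}
\|\mathcal{D}^n(x)v\| \ge c_0\,\|\mathcal{D}^n(x)\|.
\end{equation*}
This last comparison is the standard ``the norm is essentially attained on the invariant cone'' statement, which follows because the top singular direction of $\mathcal{D}^n(x)$ converges exponentially fast into $\mathcal{C}_{T^n(x)}$ uniformly in $x$ and $n$, so its distance to $\mathcal{C}_x^{\perp}$ (in the sense of the angular metric $\rho$) is bounded below.

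With this in hand the proof is short. First I would fix $x\in X$ and $m,n>0$, and choose a unit vector $w$ realizing $\|\mathcal{D}^n(T^m(x))\|$, i.e. $\|\mathcal{D}^n(T^m(x))w\| = \|\mathcal{D}^n(T^m(x))\|$; there is no constraint on where $[w]$ lies. Next, pick a unit vector $u$ whose direction lies in the cone $\mathcal{C}_x$, so that by invariance $[\mathcal{D}^m(x)u]\in \mathcal{C}_{T^m(x)}$; the point is then to bound $\|\mathcal{D}^n(T^m(x))\,\mathcal{D}^m(x)u\|$ from below in terms of $\|\mathcal{D}^n(T^m(x))\|$. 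Because $[\mathcal{D}^m(x)u]$ lies in $\mathcal{C}_{T^m(x)}$, the image $[\mathcal{D}^{n}(T^m(x))\mathcal{D}^m(x)u]$ lies in the strictly smaller cone $\mathcal{C}_{T^{m+n}(x)}$, and — crucially — the angle between $\mathcal{D}^m(x)u$ and the top singular direction of $\mathcal{D}^n(T^m(x))$ is uniformly bounded away from $\pi/2$, so $\|\mathcal{D}^n(T^m(x))\mathcal{D}^m(x)u\| \ge c_0\,\|\mathcal{D}^n(T^m(x))\|\cdot\|\mathcal{D}^m(x)u\|$. Finally, again using that $[u]\in\mathcal{C}_x$, we have $\|\mathcal{D}^m(x)u\| \ge c_0\,\|\mathcal{D}^m(x)\|$. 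Combining,
\begin{equation*}
\|\mathcal{D}^{m+n}(x)\| \ge \|\mathcal{D}^{m+n}(x)u\| = \|\mathcal{D}^n(T^m(x))\,\mathcal{D}^m(x)u\| \ge c_0^2\,\|\mathcal{D}^n(T^m(x))\|\cdot\|\mathcal{D}^m(x)\|,
\end{equation*}
which is the claimed inequality with $\kappa := c_0^2$.

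The main obstacle — really the only nontrivial point — is establishing the uniform norm-comparison estimate $\|\mathcal{D}^k(y)v\| \ge c_0\|\mathcal{D}^k(y)\|$ for directions $[v]$ in the invariant cone $\mathcal{C}_y$, uniformly over $y\in X$ and $k\ge 1$. This requires showing that the most expanded singular direction $u_1(\mathcal{D}^k(y))$ enters $\mathcal{C}_{T^k(y)}$ (equivalently, that the preimage direction sits in a cone uniformly transverse to the orthogonal complement of $\mathcal{C}_y$), which is exactly the content of the projective contraction coming from domination: the ratio $\sigma_2/\sigma_1 \le C\tau^k$ forces $u_1(\mathcal{D}^k(y))$ to be within distance $O(\tau^k)$ of the (uniformly cone-contained) image $\mathcal{D}^k(y)\mathcal{C}_y$. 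Since the compactness of $X$ and $\mathbb{P}^{d-1}$ makes all these bounds uniform, one extracts a single constant $c_0>0$ independent of $y,k$. Everything else is elementary, and the asymmetry in the use of cones — we only ever need to start with a vector in $\mathcal{C}_x$, never to constrain $w$ — is what makes the argument go through cleanly.
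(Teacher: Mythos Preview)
The paper does not itself prove this proposition---it is quoted from \cite[Proposition~5.8]{Moh22}---but in the proof of Theorem~\ref{continuity_potential} it makes explicit that the constant $\kappa$ comes from an invariant cone-field together with the Bochi--Morris estimate $\|Dv\|\ge\kappa_0\|D\|\,\|v\|$ for $v$ in the cone and $D$ mapping the cone strictly inside (\cite[Lemma~2.2]{Bochi-Morris}). Your argument is exactly this: obtain the invariant multicone from \cite{BGO}, then apply the norm-comparison on the cone twice, once for $\mathcal{D}^m(x)$ acting on $u\in\mathcal{C}_x$ and once for $\mathcal{D}^n(T^m(x))$ acting on $\mathcal{D}^m(x)u\in\mathcal{C}_{T^m(x)}$. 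So your approach is correct and coincides with the one the paper (and the cited source) has in mind; the only imprecision is in your heuristic for the norm-comparison lemma, where the relevant singular direction is the \emph{right} singular vector $v_1$ (it must stay uniformly transverse to $\mathcal{C}_x^{\perp}$), not the image direction $u_1$---but the statement itself is the Bochi--Morris lemma and needs no further justification here.
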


\subsection{Approximation of pressure and metric entropy}

\vspace{.5cm}
From the previous construction, to each typical cocycle $\mathcal{A}:\Sigma \to GL(d, \R)$ and integer $n\ge 1$ one can associate a dominated cocycle 
$\mathcal{B}: \mathbb{A}^{\Z} \to \glr$ over a compact full shift $(\mathbb A^\Z,f)$ as in Corollary \ref{definition of the dominated cocycle}.  The \emph{topological pressure on the dominated sub-system} $\mathbb{A}=\mathbb{A}_n$ is defined by
the expression
\begin{equation}
\label{defPdominated}
P_{n, \mathcal{D}}(\log \Psi^{q} (\B)):=\lim _{k \rightarrow \infty} \frac{1}{k} \log \sum_{I_{1}, \ldots, I_{k} \in \mathbb{A}_n}\psi^{q} (\B(I_{1} \ldots I_{k})).    
\end{equation}

\smallskip

\begin{rem}
For each $n\in \mathbb N$ the expression ~\eqref{defPdominated} coincides with the topological pressure of the locally constant cocycle $\B$ over the full shift $f: \mathbb{A}^\Z\to \mathbb{A}^\Z$ (recall that the alphabet $\mathbb{A}$ depends on $n$) and the potential $\log \Psi^{q} (\B)$. Moreover,  
one can write the potential 
$\log \Psi^{q} (\B)$ using the potential $\Phi_{\mathcal{B}}: \mathbb{A}^{\mathbb Z} \rightarrow \mathbb{R}^k$ defined by
$  
\Phi_{\mathcal{B}}:=\left(\log \sigma_1(\mathcal{B}), \ldots, \log \sigma_d(\mathcal{B})\right),
$ 
observing that
$ 
\left\langle q, \Phi_{\mathcal{B}}\right\rangle=\log \Psi^q(\mathcal{B})$ for every 
$q \in \mathbb{R}^d$.     
\end{rem}

The following result guarantees that the topological pressure of a typical cocycle is well aproximated by the topological pressure of dominated sub-systems.

\begin{thm}\label{continuity_potential} If $\mathcal{A}:\Sigma \to GL(d, \R)$ is a typical cocycle then
\[\lim _{n \rightarrow \infty} \frac{1}{n} P_{n, \mathcal{D}}(\log \Psi^{q} (\B))=P(T,\log \Psi^{q}(\A)),\]
for each $q\in \R^d$ (uniformly on compact subsets).
\end{thm}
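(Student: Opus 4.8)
The strategy is to prove the two inequalities separately, showing that $\liminf_{n\to\infty}\frac1n P_{n,\mathcal D}(\log\Psi^q(\mathcal B))\ge P(T,\log\Psi^q(\mathcal A))$ and that $\limsup_{n\to\infty}\frac1n P_{n,\mathcal D}(\log\Psi^q(\mathcal B))\le P(T,\log\Psi^q(\mathcal A))$. For the lower bound, I would exploit that each word in the alphabet $\mathbb A_n$ comes (by Theorem~\ref{dominated_typical} and the construction in ~\eqref{constract_dominated}) from a loop $\mathfrak p=(p,\omega_x,T^{m_1+n+m_2}(\omega_x),p)$ whose associated cocycle matrix $B_{p,\omega,T^{m_1+n+m_2}(\omega),p}$, by the factorization ~\eqref{eqdefb} in Remark~\ref{properties of the dominated path}, differs from $\mathcal A^n(x)$ only by the boundedly-distorted factors $P_{1,t},P_{2,t}$ and holonomies $H_1^t,H_2^t$ whose norms and inverses are all bounded by the constant $K$ in ~\eqref{paths is bounded}. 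Hence $\psi^q(\mathcal B(I_1\ldots I_k))$ is comparable, up to a multiplicative constant $C^k$ with $C=C(q,K,d)$, to $\prod_{j}\psi^q(\mathcal A^n(x_j))$ for the corresponding orbit segments; since the points $\{\omega_x\}$ shadow every $x\in\Sigma$ up to time $n$, a maximal $(n,1)$-separated set in $\Sigma$ injects (after perhaps grouping boundedly many points per word) into $\mathbb A_n$, which yields $\sum_{I_1,\dots,I_k\in\mathbb A_n}\psi^q(\mathcal B(I_1\ldots I_k))\ge C^{-k}\big(C^{-1}P_n(T,\langle q,\Phi_{\mathcal A}\rangle,1)\big)^{k}$ and, taking $\frac1k\log$ then $\frac1n$ and using Lemma~\ref{topological_pressure}, gives the lower bound.

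**Upper bound.** For the reverse inequality I would run the estimate in the other direction: each concatenation $I_1\ldots I_k$ with $I_j\in\mathbb A_n$ corresponds to a single admissible word of $\Sigma$ of length between $kn$ and $k(n+2K_0)$ (glue the underlying loops, discarding the $H^u,H^s$ parts and absorbing them into bounded errors as above). Thus $\psi^q(\mathcal B(I_1\ldots I_k))\le C^{k}\,\psi^q(\mathcal A(J))$ for an associated word $J$ of length $\le k(n+2K_0)$, and since ~\eqref{eq:cotaA} bounds $\#\mathbb A_n$, summing over the (at most) $2K_0\,\#\mathcal L_{n+2K_0}$ choices at each of the $k$ steps and comparing with the partition function $Z_{k(n+2K_0)}(q)$ gives $\frac1n P_{n,\mathcal D}(\log\Psi^q(\mathcal B))\le \frac{n+2K_0}{n}\,\frac1{k(n+2K_0)}\log Z_{k(n+2K_0)}(q)+O(\tfrac1n)+\tfrac{\log C}{1}$; letting $n\to\infty$ (so $K_0$ is fixed, $2K_0/n\to0$) and invoking $P(T,\log\Psi^q(\mathcal A))=\lim\frac1m\log Z_m(q)$ from Lemma~\ref{topological_pressure} finishes it. Bounded distortion (Remark~\ref{bdd distortion-remark}) is what lets us pass freely between $\psi^q(\mathcal A(I))$ and $\psi^q(\mathcal A^n(x))$ throughout.

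**Uniformity on compact sets.** Since all the comparison constants ($K$ in ~\eqref{paths is bounded}, $K_0$, the bounded-distortion constant, and $C=C(q,K,d)$) depend on $q$ only through $\|q\|$ and are locally bounded, the convergence is uniform for $q$ in compact subsets of $\R^d$; alternatively one notes that $q\mapsto P(T,\log\Psi^q(\mathcal A))$ and each $q\mapsto \frac1n P_{n,\mathcal D}(\log\Psi^q(\mathcal B))$ are convex (hence continuous), so pointwise convergence of convex functions already upgrades to locally uniform convergence.

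**Main obstacle.** The delicate point is the bookkeeping in the lower bound: one must check that distinct $(n,1)$-separated points of $\Sigma$ give rise to genuinely distinct (or at most boundedly-many-to-one) words in $\mathbb A_n$, because the map $x\mapsto\omega_x$ and the truncation in ~\eqref{constract_dominated} are not obviously injective. This is handled by observing that $T^{m_1}(\omega_x)\in[x]_n$ (Theorem~\ref{dominated_typical}(1)) and $1\le m_i\le K_0$, so the word $[\omega_x]^{w}_{m_1+n+m_2}$ determines $[x]_n$ up to the finitely many choices of $(m_1,m_2)$ and the initial block of length $m_1\le K_0$; this loses only a factor $e^{o(k)}$ (in fact a bounded factor independent of $k$) in the separated-set count, which is absorbed after dividing by $n$ and is irrelevant in the limit. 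Once this combinatorial comparison is set up cleanly, the rest is routine sub/super-multiplicativity together with Fekete, exactly as in Lemma~\ref{topological_pressure}.
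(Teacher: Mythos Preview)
Your overall strategy matches the paper's and the lower bound is essentially the same argument: reduce $P_{n,\mathcal D}$ to the single-step sum $\frac1n\log\sum_{I\in\mathbb A_n}\psi^q(\mathcal B(I))$, compare $\psi^q(\mathcal B(I))$ with $\psi^q(\mathcal A^n(x))$ through the factorization~\eqref{eqdefb} and \eqref{paths is bounded}, and then handle the finite-to-one combinatorics between $\mathbb A_n$ and $\mathcal L_n$ exactly as you describe. However, two points are missing.

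\medskip
\textbf{The role of domination.} The passage from $\psi^q(\mathcal B(I_1\ldots I_k))$ to $\prod_j\psi^q(\mathcal B(I_j))$ is \emph{not} mere ``sub/super-multiplicativity as in Lemma~\ref{topological_pressure}''; it needs the almost-additivity of the dominated cocycle $\mathcal B^{\wedge t}$, i.e.\ Proposition~\ref{prop:add}, which produces a constant $\kappa$ with $\|\mathcal B^{\wedge t}(IJ)\|\ge\kappa\,\|\mathcal B^{\wedge t}(I)\|\,\|\mathcal B^{\wedge t}(J)\|$. The paper singles out the crucial observation --- which your constant $C=C(q,K,d)$ silently assumes --- that $\kappa$ can be chosen \emph{uniformly in $n$}. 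This holds because $\kappa$ depends only on the geometry of the nested cones $\mathcal C(v_1^{(t)},\tau_2)\subset\mathcal C(v_1^{(t)},\tau_1)$ in~\eqref{invaraint cone11}, and these cones are the \emph{same} for every $n$ (cf.\ the appeal to \cite[Lemma~2.2]{Bochi-Morris} in the paper's proof). Without this uniformity your error term $\log C$ may depend on $n$ and need not give $O(1/n)$ after dividing.

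\medskip
\textbf{The upper bound via a long word fails.} Your inequality $\psi^q(\mathcal B(I_1\ldots I_k))\le C^k\psi^q(\mathcal A(J))$, with $J$ the concatenated word in $\Sigma$, does not follow from the stated ingredients. Unfolding the product $B_{I_k}\cdots B_{I_1}$ via~\eqref{eqdefb} leaves $k-1$ ``rectangle'' holonomy corrections interspersed between the $\mathcal A$-blocks, and these cannot be extracted to yield $\mathcal A_t^{|J|}(\omega')$: submultiplicativity only gives $\|(\mathcal B^k)^{\wedge t}\|\le C^k\prod_j\|\mathcal A_t^{|I_j|}(x_j)\|$, and relating this product to $\|\mathcal A_t^{|J|}(\omega')\|$ in the required direction would need super-multiplicativity of $\mathcal A_t$, which is not available (quasi-multiplicativity inserts a connecting word and goes the other way). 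The paper sidesteps this entirely: it uses almost-additivity of $\mathcal B$ to bound $\frac1n P_{n,\mathcal D}$ by $\frac1n\log\sum_{I\in\mathbb A_n}\psi^q(\mathcal B(I))+\frac{C(\kappa)}n$, then compares the \emph{single-letter} sum to $\frac1n\log\sum_{J\in\mathcal L_{n+2K_0}}\psi^q(\mathcal A(J))$ via the two-sided estimate~\eqref{ineqaultiy_singular_values} and the finite-to-one projection $\mathcal L_{n+2K_0}\to\mathbb A_n$. In other words, the correct comparison is with $(Z_{n+2K_0}(q))^k$, not with $Z_{k(n+2K_0)}(q)$.
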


\begin{proof}
Fix $n\in \mathbb N$ and the finite alphabet $\mathbb A=\mathbb A_n$ determined by ~\eqref{constract_dominated}.
Applying Proposition \ref{prop:add} to the 1-step cocycle $\mathcal B: \mathbb A^\Z \to GL(d, \R)$, one concludes that there exists a constant $\kappa=\kappa_n>0$ such that 
\begin{equation}\label{almost-additivity}
   \|\mathcal{B}^{\wedge t}(IJ)\| \geq \kappa\, \|\mathcal{B}^{\wedge t}(I)\| \|\mathcal{B}^{\wedge t}(J)\|  
\end{equation}
for every $I, J \in \bigcup_{k=1}^{\infty}\mathbb{A}^{k}$ and  $1\le t \le d-1$.
We claim that the constant $\kappa$ can be chosen uniform for all $n$. Indeed, 
this follows from the fact that the constant $\kappa$ in 
Proposition~\ref{prop:add} is guaranteed by the existence of a constant invariant cone-field (recall equation~\eqref{invaraint cone11}), and that 
if $\mathcal C, \mathcal C_0\subset \mathbb R^d$ are one-dimensional cones such that $\overline{\mathcal C_0}\subset \text{int}(\overline {\mathcal C})$ then there exists $\kappa_0>0$ so that 
$\|Dv\| \ge \kappa_0\, \|D\|\,\|v\|$ for every $v\in C_0$ and every matrix $D\in \text{GL}(d,\R)$ so that $D(\mathcal C)\subset \mathcal C_0$
(cf. \cite[Lemma 2.2]{Bochi-Morris}).
 Therefore, there exists $C(\kappa)>0$ so that
$$
\begin{aligned}
 \frac{1}{n} P_{n, \mathcal{D}}(\log \Psi^{q} (\B)) & =  \lim _{k \rightarrow \infty} \frac{1}{n k} \log \sum_{I_1, \ldots, I_k \in \mathbb{A}} e^{\langle q, \Phi_{\mathcal{B}}(I_1 \ldots I_k) \rangle} \\
& \geq \lim_{k \rightarrow \infty} \frac{1}{n k} \log \sum_{I_{1}, \ldots, I_{k} \in \mathbb{A}}e^{\sum_{\ell=1}^{k}\langle q, \Phi_{\mathcal{B}}(I_{\ell}) \rangle 
- C( \kappa) k}  \\
&=\underbrace{\frac{1}{n}\log \sum_{I \in \mathbb{A}} e^{\langle q, \Phi_{\mathcal{B}}(I) \rangle}
-\frac{ C( \kappa) 
}{n}}_\text{(1)}.
\end{aligned}
$$

Recall that we denote $\A_t:=\A^{\wedge t}$ for each $1\le t \le d-1$. 
By the definition of the one-step cocycle $\B$  and Remark \ref{properties of the dominated path}, for any $2\le i \le d$, given $I \in \mathbb{A}$ 
in such a way that $[\omega_x]_{m_1(\omega_x)+ n+ m_2(\omega_x)}^{w}=I$ for some $x\in \Sigma$,
and given $a=(a_n)_{n\in \Z}$ so that $a_0=I$,  
$$\begin{aligned}
    \sigma_{i}(\mathcal{B}(I))& =\frac{\|\mathcal{B}^{\wedge i}(I)\|}{\|\mathcal{B}^{\wedge (i-1)}(I)\|} \\
    &=\frac{\|B_{p, \omega^{a_0}, T^{m_1+n+m_2}(\omega^{a_0}), p}^{\wedge i}\|}{\|B_{p, \omega^{a_0}, T^{m_1+n+m_2}(\omega^{a_0}), p}^{\wedge i}\|}\\
    &=\frac{ \|P_{1, i}H_1^i H_2^i \mathcal{A}_{i}^{n}(x)P_{2, i}\|}{ \|P_{1, i-1}H_1^{i-1} H_2^{i-1} \mathcal{A}_{i-1}^{n}(x)P_{2, i-1}\|},
\end{aligned}
$$
where $m_1=m_1(\omega^{a_0})$ and $m_2=m_2(\omega^{a_0})$ are the integers given by 
Theorem~\ref{dominated_typical}.

%

\smallskip
From the uniform continuity of the canonical holonomies $H_{y\longleftarrow x}^{*, t}$, with $*\in \{s,u\}$, there exists a uniform upper bound $C_{0}>1$ for the norm of such holonomies whenever 
$y \in W_{\mathrm{loc}}^{*}(x)$. Therefore, for any  $n \in \N$, $x \in \Sigma,$ and $1\le i \le d$,
 \begin{equation}\label{bounded for holonomy}
C_{0}^{-2} \|\mathcal{A}_{i}^{n}(x)\| \leq \|H_1^i H_2^i \mathcal{A}_{i}^{n}(x)\|.
 \end{equation}
Therefore, combining  \eqref{paths is bounded} and
\eqref{bounded for holonomy},
\[
\begin{aligned}
\|\mathcal{A}_{i}^{n}(x)\| 
    & \le C_{0}^{2} \; \|H_1^i H_2^i \mathcal{A}_{i}^{n}(x)\|
    = C_{0}^{2} \; \|P_{1, i}^{-1}P_{1, i} H_1^i H_2^i \mathcal{A}_{i}^{n}(x)P_{2, i}P_{2, i}^{-1}\| \\
    & \le C_{0}^{2} \; K^2 \|P_{1, i}H_1^i H_2^i \mathcal{A}_{i}^{n}(x)P_{2}^i\|.
\end{aligned}
 \]
Using once more \eqref{paths is bounded} and \eqref{bounded for holonomy}, we also obtain
\begin{equation*}\label{lower-bound-B}
 \|P_{1, i}H_1^i H_2^i \mathcal{A}_{i}^{n}(x)P_{2, i}\|  \leq C_{0}^{2} K^2 \|\mathcal{A}_{i}^{n}(x)\|.
\end{equation*}
Altogether we conclude that if $C':=C'(C_0, K)=\frac{1}{C_0^4 K^{4}}$ then  
\begin{equation}\label{ineqaultiy_singular_values}
\frac{1}{C'} \sigma_{i}(\mathcal{A}^{n}(x)) \geq \sigma_{i}(\mathcal{B}(I)) \geq C' \sigma_{i}(\mathcal{A}^{n}(x))
\end{equation}
for any $1\le i \le d$. 
Hence, there exists $C''>0$ so that
$  \psi^q(\mathcal{B}(I)) \geq C'' \psi^q(\mathcal{A}^{n}(x))
$ 
for each $q \in \R^d$, guarantees that 
$ 
(1)  \geq \frac{1}{n} \log \sum_{I\in \mathcal{L}_{n}}\psi^{q}(\A(I))-\frac{C(\kappa)}{n}+\frac{C''}{n},$ 
and ultimately guarantees that
\[
\liminf_{n \rightarrow \infty} \frac{1}{n} P_{n, \mathcal{D}}(\log \Psi^{q} (\B)) \ge P(T,\log \Psi^{q}(\A))
\]
for each $q\in \R^d$ (uniformly on compact subsets).
 \medskip

We proceed to show the converse inequality.  
First note that the continuous map 
 $$
 \pi: \mathcal L_{n+2K_0} \to \mathbb A_n 
 \quad \text{defined by}\quad 
\pi(\alpha_1, \alpha_2, \dots, \alpha_{n+2K_0})
=(\alpha_1, \alpha_2, \dots, \alpha_{n})
$$
is surjective and finite-to-one.
Noting that for each $J\in \mathbb A_n$  there exist finitely many elements $I \in \pi^{-1}(J)$ (recall \eqref{eq:cotaA}) 
and equations \eqref{almost-additivity} and ~\eqref{ineqaultiy_singular_values}
one concludes that
there exists $C'''>0$ so that 
$$
\begin{aligned}
 \frac{1}{n} P_{n, \mathcal{D}}(\log \Psi^{q} (\B)) & =  \lim _{k \rightarrow \infty} \frac{1}{n k} \log \sum_{I_1, \ldots, I_k \in \mathbb{A}_n} e^{\langle q, \Phi_{\mathcal{B}}(I_1 \ldots I_k) \rangle} \\
& \leq \lim_{k \rightarrow \infty} \frac{1}{n k} \log \sum_{I_{1}, \ldots, I_{k} \in \mathbb{A}_n}e^{\sum_{\ell=1}^{k}\langle q, \Phi_{\mathcal{B}}(I_{\ell}) \rangle + C( \kappa) k}  \\
&=\frac{1}{n}\log \sum_{I \in \mathbb{A}_n} e^{\langle q, \Phi_{\mathcal{B}}(I) \rangle}+\frac{ C( \kappa) 
}{n}\\
& \le 
\frac{1}{n}\log \sum_{J \in \mathcal{L}_{n+2K_0}} e^{\langle q, \Phi_{\mathcal{A}}(J) \rangle}+
\frac{ C''' }{n},
\end{aligned}
$$
 for every $q\in \mathbb R^d$.
By taking $n\to \infty$,
 $$
 P(T,\log \Psi^{q}(\A))\geq \lim_{n\to \infty}\frac{1}{n} \log P_{n, \mathcal{D}}(\log \psi^{q}(\B)). 
 $$ 
 This ends the proof of the theorem.
\end{proof}

\begin{rem}\label{LE exists}
Let $\A: \Sigma \to \glr$ be a fiber-bunched cocycle over a topologically mixing subshift of finite type $(\Sigma, T)$. Let $\mu \in \M(T)$. By the bounded distortion property (see Remark \ref{bdd distortion-remark}), for $q \in \R^d$ the observable $\psi^{q}(\A)$ is comparable to $\Pi_{i=1}^{d} \|\A^{\wedge i}\|^{t_{i}}$ up to some constant, where $t_i = q_{i} -q_{i+1}$ and $q_{d+1} = 0$ for $1\le i \le d.$ Since $\|\A^{\wedge i}\|^{ t_{i}}$ is either sub-multiplicative
or super-multiplicative (depending on the positivity of $t_i$) then the limit $\lim_{n \to \infty} \frac{1}{n}\int \log \|(\A^{\wedge i})^{n}(x)\|^{t_i} d\mu(x)$ does exist, for each $1\le i\le d$. Hence, the limit
\[\lim_{n \to \infty} \frac{1}{n}\int \log \psi^{q}(\A^{n}(x)) d\mu(x)\]
is well-defined.
\end{rem}

\begin{prop}\label{relation between entropies and LE}
Suppose that $\mathcal{A}:\Sig \to \glr$ is a typical cocycle.  There exists $K_0 \ge 1$ so that, for each $n>2K_0$ there is a dominated one-step cocycle 
$\mathcal{B}: \mathbb{A}^{\Z} \to \glr$ over a compact full shift $(\mathbb A^\Z,f)$ (depending on $n$) as in Corollary \ref{definition of the dominated cocycle} satisfying the following:
given arbitrary $q\in \mathbb R^d$ and $\mu \in \M( f)$ there exists $\nu \in \M(T)$ such that
\begin{enumerate}
    \item $nh_{\nu}(T) \leq  h_{\mu}(f) \leq (n+2K_0)h_{\nu}(T)+\frac {n+2K_0}n \log (2K_0+1)$;
    \smallskip
    \item
    $n\lim_{k\to \infty} \frac{1}{k} \int \log  \psi^q(\mathcal{A}^k(\cdot))) \,d\nu 
    \leq 
    \lim_{k \to \infty} \frac{1}{k} \int  \log\psi^q(\mathcal{B}^{k}(\cdot))) \,d\mu$;
    \smallskip
    \item
    $\lim_{k \to \infty} \frac{1}{k} \int  \log\psi^q(\mathcal{B}^{k}(\cdot))) \,d\mu 
    \leq 
    (n+2K_{0})\lim_{k\to \infty} \frac{1}{k} \int \log  \psi^q(\mathcal{A}^k(\cdot))) d\nu$.
\end{enumerate} 
\end{prop}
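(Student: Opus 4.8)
The plan is to exploit the combinatorial correspondence between finite words in the alphabet $\mathbb{A}=\mathbb{A}_n$ and blocks in $\Sigma$ of length roughly $n$, passing measures back and forth along this correspondence. Recall from \eqref{constract_dominated} that each symbol $I\in\mathbb{A}$ is a word $[\omega_x]^w_{m_1(\omega_x)+n+m_2(\omega_x)}$ with $n\le |I|\le n+2K_0$; in particular every $\mathbb{A}$-symbol determines, by concatenation and the surjection $\pi:\mathcal L_{n+2K_0}\to\mathbb A_n$ used in the proof of Theorem~\ref{continuity_potential}, an admissible $\Sigma$-word of length between $n$ and $n+2K_0$. Given $\mu\in\M(f)$, the idea is to build $\nu\in\M(T)$ as the $T$-invariant measure obtained by pushing $\mu$ through the coding map that sends a bi-infinite $\mathbb{A}$-sequence $(a_i)_{i\in\mathbb{Z}}$ to the concatenation of the corresponding $\Sigma$-words, and then averaging over the (boundedly many) shifts $T^0,\dots,T^{\ell-1}$ needed to make the result shift-invariant, where $\ell$ is the (variable, but bounded between $n$ and $n+2K_0$) length of the block attached to $a_0$. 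Concretely, if $\bar\pi:\mathbb A^{\mathbb Z}\to\Sigma$ denotes the concatenation map and $N(a):=|a_0$-block$|$, one sets $\nu:=\frac1{\int N\,d\mu}\sum_{j\ge 0}(T^j\circ\bar\pi)_*\big(\mu|_{\{N>j\}}\big)$, which is the standard suspension/tower construction; here $n\le N(a)\le n+2K_0$ guarantees $\int N\,d\mu\in[n,n+2K_0]$.

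First I would establish item (1). Both inequalities are instances of Abramov's formula for the entropy of a tower: the base dynamics is $(\mathbb A^{\mathbb Z},f,\mu)$ with return-time function $N$, and $\nu$ is the normalized tower measure, so Abramov gives $h_\nu(T)=h_\mu(f)/\int N\,d\mu$. Since $n\le\int N\,d\mu\le n+2K_0$ this immediately yields $n\,h_\nu(T)\le h_\mu(f)\le(n+2K_0)\,h_\nu(T)$; the extra additive term $\frac{n+2K_0}{n}\log(2K_0+1)$ in the stated upper bound comes from the fact that the coding $\bar\pi$ is not injective — distinct $\mathbb A$-sequences may concatenate to the same $\Sigma$-sequence because the overlap region of length at most $2K_0$ (the connecting words $P_{1},P_{2}$ from Theorem~\ref{dominated_typical}(3)) is not recorded — so one loses at most $\log(2K_0+1)$ bits of entropy per $\mathbb A$-symbol, i.e. per $n$ symbols of $\Sigma$, which after normalizing by $\int N\,d\mu\le n+2K_0$ contributes the stated slack. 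Alternatively one argues directly with $(n,1)$-separated sets: a block of length $kn$ in $\Sigma$ seen by $\nu$ corresponds to at most $(2K_0+1)^{k}$-many length-$k$ $\mathbb A$-words, which is the same estimate.

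Next, items (2) and (3). These are the Lyapunov-exponent analogues of (1) and follow from the comparison \eqref{ineqaultiy_singular_values} already proved inside Theorem~\ref{continuity_potential}: for every symbol $I\in\mathbb A$ with $\pi$-preimage word over $[x]$ one has $C'\sigma_i(\mathcal A^n(x))\le\sigma_i(\mathcal B(I))\le\frac1{C'}\sigma_i(\mathcal A^n(x))$, hence $\psi^q(\mathcal B(I))$ and $\psi^q(\mathcal A^n(x))$ are comparable up to a multiplicative constant $C''$ uniform in $n$; passing to logs, $\log\psi^q(\mathcal B(I))=\log\psi^q(\mathcal A^{\,|\text{block}|}(\cdot))+O(1)$ along $\bar\pi$, where the $O(1)$ absorbs both $\log C''$ and the bounded-distortion discrepancy between $|\text{block}|\in\{n,\dots,n+2K_0\}$ and $n$ (Remark~\ref{bdd distortion-remark}). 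Integrating against $\mu$, using $f$-invariance to write $\lim_k\frac1k\int\log\psi^q(\mathcal B^k)\,d\mu=\int\log\psi^q(\mathcal B)\,d\mu+ (\text{correction})$ — more precisely using the almost-additivity \eqref{almost-additivity} of $\{\log\|\mathcal B^{\wedge t}\|\}$ so that the Birkhoff-type limit equals $\int\log\psi^q(\mathcal B(\cdot))\,d\mu$ up to the constant — and then Abramov's formula for additive cocycles over the tower (which says $\int(\text{tower cocycle})\,d\nu=\frac1{\int N\,d\mu}\int(\text{base cocycle})\,d\mu$), one obtains $n\cdot\lim_k\frac1k\int\log\psi^q(\mathcal A^k)\,d\nu\le\lim_k\frac1k\int\log\psi^q(\mathcal B^k)\,d\mu\le(n+2K_0)\cdot\lim_k\frac1k\int\log\psi^q(\mathcal A^k)\,d\nu$, exactly as claimed; the existence of all these limits is guaranteed by Remark~\ref{LE exists} (and sub/super-multiplicativity of each $\|\cdot^{\wedge i}\|^{t_i}$).

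The main obstacle I expect is bookkeeping the variable block-length $N(a)\in\{n,n+1,\dots,n+2K_0\}$ cleanly: because the induced return time is not constant, one genuinely needs the tower (Abramov) formalism rather than a naive $n$-to-$1$ identification, and one must check that the measure $\nu$ so constructed is (a) well-defined and $T$-invariant, (b) has the correct entropy up to the stated two-sided bound, and (c) reads the cocycle $\mathcal A$ with the right asymptotic averages despite the holonomy corrections $P_{1,t},P_{2,t}$ of bounded norm \eqref{paths is bounded}. Keeping track of the non-injectivity of $\bar\pi$ — i.e. that two different $\mathbb A$-codings can glue to the same $\Sigma$-orbit because the $2K_0$ connecting symbols are forgotten — is what produces the additive $\log(2K_0+1)$ term and is the one place where the argument is not purely formal; everything else is an application of \eqref{ineqaultiy_singular_values}, \eqref{almost-additivity}, bounded distortion, and Abramov's theorem.
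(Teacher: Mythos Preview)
The paper does not give its own argument here; its entire proof is the sentence ``It follows from the proof of \cite[Proposition~3.4]{Moh23}.'' Your tower/Abramov strategy --- pushing $\mu$ forward along the concatenation map $\bar\pi:\mathbb A^{\mathbb Z}\to\Sigma$, averaging over the variable block length $N\in\{n,\dots,n+2K_0\}$ to get a $T$-invariant $\nu$, and then reading off entropy and exponent relations from Abramov's formula together with the bounded non-injectivity of $\bar\pi$ --- is the standard way such induced-system comparisons are proved and is almost certainly what the cited reference does.

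Two places in your sketch deserve tightening. First, the sentence ``the Birkhoff-type limit equals $\int\log\psi^q(\mathcal B(\cdot))\,d\mu$ up to the constant'' is not correct as written: almost-additivity guarantees that $\lim_k\frac1k\int\log\psi^q(\mathcal B^k)\,d\mu$ exists, but does not identify it with the one-step integral. The cleaner route is to argue exterior-power by exterior-power via Oseledets: the $i$-th Lyapunov exponent of the induced cocycle satisfies $\chi_i(\mu,\mathcal B)=(\int N\,d\mu)\,\chi_i(\nu,\mathcal A)$, and then one takes the linear combination $\sum_i q_i\chi_i$ and sandwiches $\int N\,d\mu$ between $n$ and $n+2K_0$. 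Second, $\mathcal B$ is not literally the induced cocycle $\mathcal A^{N}\circ\bar\pi$ --- each $\mathcal B$-generator carries the holonomy factors $H^{s,t},H^{u,t}$ and the paths $P_{1,t},P_{2,t}$ --- so to make the exponent identity rigorous you need to observe that $\mathcal B$ is cohomologous to the genuine induced cocycle via a \emph{bounded} transfer function built from the local holonomies (this is where \eqref{paths is bounded} and \eqref{bounded for holonomy} enter), and bounded coboundaries do not change Lyapunov exponents. With these two adjustments your outline is complete.
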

\begin{proof}
    It follows from the proof of \cite[Proposition 3.4]{Moh23}.
\end{proof}

  \section{Proof of  Theorem \ref{thmA}}\label{Sec: proof of theoremA}

  Let $\A: \Sigma \to \glr$ be a typical cocycle. By Corollary \eqref{definition of the dominated cocycle}, to each large $n\ge 1$ one can associate an alphabet $\mathbb A=\mathbb A_n$ and a dominated cocycle $\B_{\mathbb A}: \mathbb{A}^{\Z} \to \glr$ over the full shift $(\mathbb{A}^{\Z}, f_{\mathbb A}).$ 
  Proposition \ref{prop:add} 
  ensures that 
  the family of potentials $\{\log \sigma_i(\B^n_{\mathbb A})\}_{n\in \N}$  is almost additive. We need the following notations for the proof. We denote by $\mathring{\vec{L}}_{\mathbb{A}}$ the Lyapunov spectrum corresponding to the cocycle $\mathcal{B}_{\mathbb{A}}$, and set
\[
\Omega := \{ (\chi_1(\mu, \mathcal{B}_{\mathbb{A}}), \ldots, \chi_d(\mu, \mathcal{B}_{\mathbb{A}})) : \mu \in \M(f_{\mathbb{A}}) \}.
\]
We also recall that $L$ denotes the Lyapunov spectrum (see \eqref{def:Lyapunov-spectrum}).

  By  \cite[Theorem 5.2 item (1)]{FH}, $\mathring{\vec{L}}_{\mathbb A} \subset \Omega$.
  Thus, since such cocycle is one-step and dominated, combining \cite[Theorem 5.4]{Moh22-entropy} 
  and 
  \cite[Theorem 5.2]{FH} 
  one concludes that 
  \medskip
  \[ 
  \begin{aligned}
h_{\mathrm{top}}(f_{\mathbb A}, E^{n, \mathcal{D}}(\vec{\alpha}))
    & = \inf _{q \in \mathbb{R}^{d}}\left\{P\left(\log \Psi^{q}(\mathcal B_{\mathbb A}) \right)- \langle q, \vec{\alpha} \rangle \right\} \\
    & = \sup\big\{ h_{\mu}(f_{\mathbb A}) \colon   \mu \in \mathcal M_{inv}(f_{\mathbb A}), \; 
    \chi_i(\mu, \mathcal B_{\mathbb A})=\alpha_i, \forall 1\le i \le d\big\}
\end{aligned} 
\]
for every $\alpha \in \mathring{\vec{L}}_{\mathbb A}$, where
\[
E^{n, \mathcal{D}}(\vec{\alpha}):=\bigg\{ x\in \mathbb A^{\Z}: \lim_{n\to \infty}\frac{1}{n}\log \sigma_{i}(\mathcal{B}_{\mathbb A}^{n}(x))=\alpha_i \text{ for } 1\le i \le d \bigg\}.
\]

\smallskip
The idea in the proof of Theorem \ref{thmA} is to estimate the topological entropy of level sets for the original typical cocycle by the one for one-step dominated systems, using the known variational principle for induced subsystems.
Fix $\alpha \in \mathring{\vec{L}}$.
On the one hand, using 
Proposition \ref{relation between entropies and LE} 
and the previous variational principles,
\[ 
\begin{aligned}
   h_{\text{top}} (f_{\mathbb A}, E^{n, \mathcal{D}}( n \vec{\alpha})) 
   & = \sup\big\{ h_{\mu}(f_{\mathbb A}) \colon   \mu \in \mathcal M_{inv}(f_{\mathbb A}), \; 
 \chi_i  (\mu, \mathcal B_{\mathbb A})=  n \alpha_i,  \forall 1\le i \le d\big\} \\
    & \le  \sup\Big\{ (n+2K_0)\, h_{\nu}(T) + \frac {n+2K_0}n \log (2K_0+1) \colon \\
    & \qquad \qquad    \nu \in \mathcal M_{inv}(T), \; 
 \chi_i(\nu, {\mathcal A}) \color{black} \in \Big[\frac{n}{n+2K_0} \alpha_i, \alpha_i \Big],
     \forall 1\le i \le d\Big\}.
\end{aligned}
\]
On the other hand, using 
Proposition \ref{relation between entropies and LE} once more,
\[ 
\begin{aligned}
   h_{\text{top}} (f_{\mathbb A}, E^{n, \mathcal{D}}(n\vec{\alpha})) 
   & \ge  n \cdot \sup\Big\{ \, h_{\nu}(T)  \colon \nu \in \mathcal M_{inv}(T),  \\
    & \qquad \qquad    \; 
     \chi_i(\nu, {\mathcal A})  \in \Big[\frac{n}{n+2K_0}\alpha_i,\alpha_i\Big], \forall 1\le i \le d\Big\}.
\end{aligned}
\]
Therefore, there exists $C>0$ so that, if $n\ge 1$ is large, 
\begin{equation}\label{ThmA: eq1}
   h_{\text{top}} (f_{\mathbb A}, E^{n, \mathcal{D}}( n \vec{\alpha})) \leq (n+2K_{0}) \;
   h_{\text{top}} \Big(T, \bigcup_{\vec \beta \in \Theta_n}E(\vec \beta)\Big) + C
\end{equation}
where 
 the set $\Theta_n$ is formed by all vectors $\vec \beta$ so that $\beta_i \in \Big[\frac{n}{n+2K_0}\alpha_i ,\alpha_i \Big]$ for each $1\le i \le d$. 
Notice that $|\beta_i-\alpha_i |\le \frac{2K_0 \|\vec\alpha\|}{n+2K_0}$ for each $1\le i\le d$.
Then, using Corollary \ref{proof of the upper bound for vectors close to alpha}, we obtain 
\begin{equation}\label{ThmA: eq2}
    h_{\text{top}} \Big(T, \bigcup_{ \vec \beta \in \Theta_n}E(\vec \beta)\Big)
    \leq 
    P (T,\log \Psi^q(\mathcal{A}))
    -\sum_{i=1}^{d}\left(\alpha_i q_i-\frac{2 |q_i| K_0 \|\vec\alpha\|}{n+2K_0}\right)
\end{equation}
for any $q\in \R^d,$ 
in this way, combining the variational principle for the dominated cocycle $\mathcal B_{\mathbb A}$ with inequa\-lities \eqref{ThmA: eq1} and \eqref{ThmA: eq2},
$$
\begin{aligned}
\inf _{q \in \mathbb{R}^{d}}\left\{P_{n, \mathcal{D}}\left(\langle q , \Phi_{\mathcal{B_{\mathbb A}}}-n \vec{\alpha})  \rangle \right) \right\}
    &= h_{\text{top}} (f_{\mathbb A}, E^{n, \mathcal{D}}( n \vec{\alpha}) )\\
& \leq \frac{n+2K_0}{n}\, n\,  h_{\text{top}} \Big(T, \bigcup_{ \vec \beta \in \Theta_n} E(\vec \beta)\Big) + {C}\\
& \leq \frac{n+2K_0}{n} \Big[n\, P (T,\log \Psi^q(\mathcal{A}))-\sum_{i=1}^{d}\left({\alpha_i} q_i-\frac{2 |q_i| K_0 \|\vec\alpha\|}{n+2K_0}\right)\Big] + C.
\end{aligned}
$$
Dividing all terms by $n$, taking the limit as $n$ tends to infinity, and noticing that 
$$
\lim_{n\to\infty} \frac1n P_{n, \mathcal{D}}\left(\langle q , \Phi_{\mathcal{B_{\mathbb A}}}-n\vec{\alpha} \rangle \right)
= 
P\left(T,\langle q ,
\Phi_{\mathcal{{\mathcal A}}}- \vec{\alpha} \rangle \right)
$$ 
(cf. Theorem \ref{continuity_potential}) we conclude that 
 \[
 \begin{aligned}
  \lim_{n\to\infty}
  h_{\text{top}} \Big(T, \bigcup_{\vec \beta \in \Theta_n} E(\vec \beta)\Big) 
  & =
  \lim_{n\to\infty} \frac1n h_{\text{top}} (f_{\mathbb A}, E^{n, \mathcal{D}}(n\vec{\alpha}))
  \color{black} \\
  & = \inf_{q\in \R^d}
  \Big\{\; P \left( \log \Psi^{q}(\mathcal{A})  \right)- \langle q, \vec{\alpha} \rangle \; \Big\} 
  \\
  &  
  =
  \sup\Big\{ \, h_{\nu}(T)  \colon \nu \in \mathcal M_{inv}(T), \; 
     \chi_i(\nu, {\mathcal A}) =\alpha_i, \forall 1\le i \le d\Big\} \\
    & 
    \le h_{\mathrm{top}}(T,E(\vec{\alpha})). 
 \end{aligned}
  \] 
  As $E(\vec\alpha) \subset \bigcup_{\vec \beta \in \Theta_n} E(\vec \beta)$ for each $n\ge 1$, one concludes that 
  $$
h_{\mathrm{top}}(T,E(\vec{\alpha}))
=
  \lim_{n\to\infty}
  h_{\text{top}} \Big(T, \bigcup_{\vec \beta \in \Theta_n} E(\vec \beta)\Big),
  $$
and the proof of the theorem is now complete.

\section{Variational principle for the pressure} \label{sec: proofthmD}

In this section, we proceed to prove Theorem \ref{thmD}. Let $\A: \Sigma \to \glr$ be a fiber-bunched cocycle over a topologically mixing subshift of finite type $(\Sigma, T)$. Let $\mu \in \M(T)$. By Remark \ref{LE exists}, the limit
\[\lim_{n \to \infty} \frac{1}{n}\int \log \psi^{q}(\A^{n}(x)) d\mu(x)\]
is well-defined. We will prove the two inequalities separately.

\begin{lem}\label{upper-bound of var-pri}
 Let $\A:\Si \to \glr$ be a typical cocycle. Then,
 for any $q \in \R^d$,
\[
\sup\bigg\{ h_{\mu}(T)+\lim_{n\to \infty} \frac{1}{n} \int \log \psi^{q}(\A^n(x)) \,d\mu(x): \mu \in \M( T) \bigg\} \leq P(T,\log \Psi^{q}(\mathcal{A})).
\]
\end{lem}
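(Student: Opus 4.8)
The plan is to establish the routine half of the variational principle by reducing everything to a count of cylinders for the partition function $Z_n(q)$, taking advantage of the fact that $\psi^q$ factors \emph{exactly} as a product of powers of norms of exterior powers. First I would reduce to ergodic measures. The functional $\mu\mapsto\Lambda_q(\mu):=\lim_{n\to\infty}\frac1n\int\log\psi^q(\A^n(x))\,d\mu(x)$ is well defined by Remark~\ref{LE exists}, and it is affine on $\M(T)$: for each fixed $n$ the map $\mu\mapsto\int\log\psi^q(\A^n(x))\,d\mu(x)$ is affine, and a pointwise limit of affine maps is affine. Combining this with affinity of $\mu\mapsto h_\mu(T)$, Jacobs' theorem, and Fubini applied to the integrand $\frac1n\log\psi^q(\A^n(\cdot))$ — which is bounded uniformly in $n$ and $x$ because $\Sigma$ is compact and $\A$ takes values in $\glr$ — the ergodic decomposition reduces the claimed inequality to the case where $\mu$ is ergodic.

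Next, for ergodic $\mu$ I would extract almost-everywhere asymptotics. Write $h:=h_\mu(T)$ and $\chi:=\Lambda_q(\mu)$. Using $\psi^q(A)=\big(\prod_{i=1}^{d-1}\|A^{\wedge i}\|^{q_i-q_{i+1}}\big)\|A^{\wedge d}\|^{q_d}$ (cf.~\eqref{eq:defgenps}), I would apply Kingman's subadditive ergodic theorem to each sequence $\{(q_i-q_{i+1})\log\|(\A^{\wedge i})^n(\cdot)\|\}_n$ — which is subadditive or superadditive depending on the sign of $q_i-q_{i+1}$ — and Birkhoff's theorem to the additive term $q_d\log|\det\A^n(\cdot)|$, to conclude that $\frac1n\log\psi^q(\A^n(x))\to\chi$ for $\mu$-a.e.\ $x$. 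Combining this with the Shannon--McMillan--Breiman theorem and Egorov's theorem, for every $\varepsilon>0$ I would obtain a measurable set $A$ with $\mu(A)>\frac12$ and an integer $N$ so that $\mu([x]_n)\le e^{-n(h-\varepsilon)}$ and $\psi^q(\A^n(x))\ge e^{n(\chi-\varepsilon)}$ for all $x\in A$ and $n\ge N$.

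The heart of the argument is then the cylinder count. Fix $n\ge N$ and let $\mathcal C_n$ be the collection of words $I\in\mathcal L_n$ with $[I]\cap A\neq\emptyset$; for each such $I$ pick $x_I\in[I]\cap A$, so that $[I]=[x_I]_n$ and hence $\mu([I])\le e^{-n(h-\varepsilon)}$. Since these cylinders cover $A$ one gets $\#\mathcal C_n\ge\frac12 e^{n(h-\varepsilon)}$, while $\psi^q(\A(I))=\max_{y\in[I]}\psi^q(\A^n(y))\ge\psi^q(\A^n(x_I))\ge e^{n(\chi-\varepsilon)}$. Recalling~\eqref{eq:defpartitionfunction}, this gives $Z_n(q)\ge\sum_{I\in\mathcal C_n}\psi^q(\A(I))\ge\#\mathcal C_n\cdot e^{n(\chi-\varepsilon)}\ge\frac12 e^{n(h+\chi-2\varepsilon)}$. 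Since $\A$ is typical, $P(T,\log\Psi^q(\A))=\lim_{n\to\infty}\frac1n\log Z_n(q)$ by Lemmas~\ref{topological_pressure} and~\ref{typical cocycles are qm}; letting $n\to\infty$ and then $\varepsilon\to0$ yields $P(T,\log\Psi^q(\A))\ge h_\mu(T)+\Lambda_q(\mu)$, and taking the supremum over (ergodic, hence all) $\mu\in\M(T)$ finishes the proof.

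I do not expect a serious obstacle, as this is the elementary direction of the variational principle; the one point demanding care is the lack of additivity of $\psi^q$, which is circumvented precisely because $\psi^q$ is an exact product of sub/super-multiplicative quantities, so Kingman's theorem applies factor by factor and the possibly negative exponents $q_i-q_{i+1}$ pose no difficulty for almost-sure convergence. A minor technical point is the justification of the ergodic-decomposition reduction, which is where the uniform boundedness of $\frac1n\log\psi^q(\A^n(\cdot))$ on the compact space $\Sigma$ is used.
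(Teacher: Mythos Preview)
Your proof is correct, but it takes a considerably longer route than the paper's. The paper dispatches the lemma in three lines via the elementary Gibbs--Jensen inequality $\sum_{i} p_i(c_i-\log p_i)\le\log\sum_i e^{c_i}$, applied with $p_I=\mu([I])$ and $c_I=\log\psi^q(\A(I))$ for $I\in\mathcal L_n$: the left side is bounded below by $\tfrac1n H_\mu(\mathcal P^n)+\tfrac1n\int\log\psi^q(\A^n(x))\,d\mu(x)$ (using $\psi^q(\A(I))\ge\psi^q(\A^n(x))$ for $x\in[I]$), and the right side is $\tfrac1n\log Z_n(q)$; passing to the limit finishes. This works directly for every invariant $\mu$, with no ergodic reduction, no Kingman, no Shannon--McMillan--Breiman, and no Egorov.

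Your approach --- reduce to ergodic measures, obtain almost-sure convergence of $\tfrac1n\log\psi^q(\A^n(\cdot))$ factor by factor via Kingman, then count good cylinders --- is the ``Katok-style'' counting argument. It is more hands-on and yields slightly more (an explicit lower bound on $Z_n(q)$ coming from a large set of cylinders carrying prescribed measure and singular-value behaviour), which can be useful if one later needs quantitative control. The cost is the extra machinery and the ergodic-decomposition step, whose justification (uniform boundedness of $\tfrac1n\log\psi^q(\A^n(\cdot))$) you handled correctly. Neither argument actually needs typicality for the inequality itself: replacing $P$ by $P^*=\limsup$ gives the bound in full generality, and typicality is only invoked so that $P^*=P$.
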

\begin{proof}
 We recall the following inequality (see e.g. \cite{Bow}): given $c_i \in \mathbb{R}, p_i \geq 0$ and $\sum_{i=1}^m p_i=1$ one has that  
 $\sum_{i=1}^m p_i\left(c_i-\log p_i\right) \leq \log \sum_{i=1}^m e^{c_i}.
 $ 
Assume that $\mu \in \M( T).$ Since $\sum_{I \in \mathcal{L}_n} \mu([I])=1$, the inequality above implies that
\[ \frac{1}{n}\sum_{I \in \mathcal{L}_{n}} \mu([I]) \big(-\log \mu([I])+\log \psi^q(\A(I))) \leq \frac{1}{n} \log \sum_{I \in \mathcal{L}_n}  \psi^q(\A_I),\]
 for any $q \in \R^d.$
By Lemma \ref{topological_pressure} and the fact that the limit that defines  $P(\log \psi^q(\mathcal{A}))$ does exist for any $q\in \R^{d}$, one concludes that
\[ h_{\mu}(T)+\lim_{n \to \infty} \frac1{n} \int {\log \psi^q(\A^{n}(x))} \, d\mu(x) \leq P(\log \psi^q(\A)).\]
Since $\mu$ was chosen arbitrary, we obtain the conclusion of the lemma.
\end{proof}

In order to complete the proof of Theorem \ref{thmD} we are left to prove the converse inequality
\[
\sup\bigg\{ h_{\mu}(T)+\lim_{n\to \infty} \frac{1}{n} \int \log \psi^{q}(\A^n(x)) \,d\mu(x): \mu \in \M( T) \bigg\} \geq P(T,\log \Psi^{q}(\mathcal{A})).
\]
We will use the existence of dominated cocycles.
For each $n\ge 1$ let $\mathbb A=\mathbb A_n$ be the
finite alphabet and let 
$\mathcal{B}_{\mathbb A}: \mathbb{A}^{\Z} \to \glr$ be a dominated cocycle  as in Corollary \ref{definition of the dominated cocycle}.  
    Since $\{\langle q, \Phi_{\mathcal{B}_{\mathbb A}} \rangle \}_{n \in \N}$ is almost additive for each $q \in \R^d$ (recall Proposition \ref{prop:add}) and the measure theoretical entropy function is upper semi-continuous we one has the variational principle 
\begin{equation*}\label{var-prin-for-almost addititve}
P_{n, \mathcal{D}}(\log\Psi^q(\mathcal{B}_{\mathbb A}))=\sup\bigg\{ h_{\mu}(f)+\lim_{k \to \infty} \frac{1}{k} \int  \log \psi^{q}(\B_{\mathbb A}^{k}(x))  d\mu(x): \mu \in \M(f)\bigg\},
\end{equation*}
and, for each $q \in \mathbb{R}^d$, the supremum is attained by a unique ergodic equilibrium state $\mu_{n}^q \in \M( f)$  which has the Gibbs property
(cf. \cite[Theorem 10.1.9]{Barreira} for more details). Therefore,
\begin{equation}\label{var-prin-for-almost addititve1}
P_{n, \mathcal{D}}(\log \Psi^q(\mathcal{B}_{\mathbb A}))=h_{\mu_{n}^q}\left(f \right)+\lim_{k \to \infty} \frac{1}{k} \int  \log \psi(\B_{\mathbb A}^k(x))  d\mu_{n}^q(x)
\end{equation}
By Proposition \ref{relation between entropies and LE}, to each $\mu_{n}^q \in \M( f)$ one can associate a $T$-invariant probability measure  $\nu_{n}^q$ on $\Sigma$ such that  \begin{equation}\label{Lyapunov-exponent-relation}
\lim_{k \to \infty} \frac{1}{k} \int \log \psi^{q}(\B^k(x)) d\mu_{n}^q(x) \leq (n+2K_{0})\lim_{k\to \infty} \frac{1}{k} \int \log \psi^{q}(\A^k(x)) d\nu_{n}^q(x) 
\end{equation}
and \begin{equation}\label{relation between entropies}
 h_{\mu_{n}^q}(f) \leq (n+2K_0)h_{\nu_{n}^q}(T)+\frac {n+2K_0}n \log (2K_0+1).
\end{equation} 
In consequence, 
$$
\begin{aligned}
 \frac{1}{n}P_{n, \mathcal{D}}(\log\Psi^q(\mathcal{B}_{\mathbb A})) 
& \stackrel{\eqref{var-prin-for-almost addititve1}}{=}\frac{1}{n} \left( h_{\mu_{n}^q}\left(f \right)+\lim_{k \to \infty} \frac{1}{k} \int \log \psi(\B^k(x)) d\mu_{n}^q(x) \right)\\
&\stackrel{\eqref{relation between entropies}\text{ and } \eqref{Lyapunov-exponent-relation}}{\leq} \frac{n+2K_0}{n} \left(h_{\nu_{n}^{q}}(T) +\lim_{k\to \infty} \frac{1}{k} \int \log \psi^{q}(\A^k(x)) d\nu_{n}^{q}(x) \right) 
\\ & \qquad \quad +\frac {n+2K_0}{n^2} \log (2K_0+1) \\
&
 \leq \frac{n+2K_0}{n} \sup\bigg\{ h_{\nu}(T)+\lim_{k\to \infty} \frac{1}{k} \int \log \psi^{q}(\A^k(x)) d\nu(x): \nu \in \M( T) \bigg\}\\
& \qquad \quad +\frac {n+2K_0}{n^2} \log (2K_0+1).
\end{aligned}
$$
Taking the limit as $n\to+\infty$ and recalling Theorem \ref{continuity_potential} we deduce 
\[
P(T,\log \Psi^{q}(\mathcal{A})) \le \sup\bigg\{ h_{\nu}(T)+\lim_{n\to \infty} \frac{1}{n} \int \log \psi^{q}(\A^n(x)) \,d\nu(x): \nu \in \M( T) \bigg\}.
\]
This finishes the proof of Theorem \ref{thmD}. 


\section{Multifractal formalism of Anosov diffeomorphisms and  repellers}\label{sec:proofapp}

The goal of this section is to describe a multifractal formalism for Anosov diffeomorphisms and repellers whose derivative cocycles fit our assumptions.

\subsection{Proof of Theorem \ref{thmE}}
Assume that $f$ is an Anosov diffeomorphism and that there exists a periodic point $p \in M$ of period $n \in \mathbb{N}$ such that $\left.D_p f^n\right|_{E^u}$ has simple eigenvalues of distinct norms and there exist gives homoclinic points $z_{ \pm} \in M$ of $p$ for $f^n$ whose holonomy loops twist the eigendirections of $\left.D_p f^n\right|_{E^u}$.

By existence of a Markov partition for $f$ \cite{Bow} there exists a H\"older continuous surjection $\pi: \Sigma \rightarrow M$ such that $f \circ \pi=\pi \circ \sigma$, where $\sigma: \Sigma \to \Sigma$ is a topologically mixing subshift of finite type. The assumption on the periodic point $p$ guarantees that, defining  
the cocycle $\A : \Sigma \to GL(d,\mathbb R)$ by \eqref{definition of cocycle for Ansosov},
the cocycle $\A^n: \Sigma \to GL(d,\mathbb R)$ over the shift $(\Sigma,\sigma^n)$ is typical. 
Then the proof follows from Theorem \ref{thmA}.


\subsection{Proof of Theorem \ref{thmF}}

The strategy in the proof of the theorem is to introduce a notion of typicality on repellers (similar to Definition \ref{typical1}), to show that this is satisfied by a large set of repelllers and that it is a sufficient condition to obtain the variational principles.

\begin{defn}
We say that {an} $\alpha$-bunched repeller $\Lambda$ associated to a $C^{1+\alpha}$-map $h$ is \emph{typical} if the following conditions are satisfied:
\begin{itemize}
\item[1)] There exists such a periodic point $p_0 \in \Lambda$ such that the eigenvalues of the matrix $A(p_0):=D_{p_0} h^{\operatorname{per}\left(p_0\right)}$ and all its exterior powers $A(p_0)^{\wedge t}$ $ (1\le t\le d)$ 
have multiplicity $1$ and distinct absolute value;
\item[2)] There exists a sequence of points $\left\{z_n\right\}_{n \in \mathbb{N}_0} \subset \Lambda$ such that
$$
z_0=p_0, \; h (z_n)=z_{n-1} \text {, and } z_n \xrightarrow{n \rightarrow \infty} p_0
$$
and so that, for each $1\le t \le d$, 
the eigenvectors $\left\{v_{1}^{(t)}, \ldots, v_{d_t}^{(t)}\right\}$ of $A(p_0)^{\wedge t}$
are such that, for any $I, J \subset \{1, \ldots, d_t\}$ with $|I|+$ $|J| \leq d_t$, the set of vectors
$$
\left\{ \tilde{H}_{p_0,t}^{\left\{z_n\right\},-} \left(v_{i}^{(t)}\right): i \in I\right\} \cup\left\{v_{j,t}: j \in J\right\}
$$
is linearly independent, where $d_t$ is defined as in ~\eqref{eq:dimwed} and 
\begin{equation}
\label{defholnoninv}
\tilde{H}_{p_0,t}^{\left\{z_n\right\},-}:=\lim _{n \rightarrow \infty}\left((D_{p_0} h)^{\wedge t}\right)^n\left((D_{z_n} h)^{\wedge t} \right)^{-1} \ldots\left((D_{z_1} h)^{\wedge t}\right)^{-1}.
\end{equation}
\end{itemize}
\end{defn}

As in the invertible setting, in case the periodic point $p_0$ given by the previous definition is not fixed, we can consider a power of the $C^{1+\alpha}$-map. In this way we will always assume that $p_0$ is a fixed point of $h$ (see Remark \ref{fixed point}).

\begin{rem}
 It is worth noticing that the choice of a pre-orbit of $p_0$ in item (2) replaces the homoclinic loop in case of invertible maps and that the existence of the limit in ~\eqref{defholnoninv}
is guaranteed by  $C^{\alpha}$-regularity of $D h$  and the $\alpha$-bunching assumption on  $\Lambda$. Indeed, given $p\in \Sigma$ so that $\pi(p)=p_0$, an homoclinic point $z$, the identification $L(\pi p): \mathbb{R}^d \rightarrow T_{p_0} M$ and the cocycle 
$\mathcal{C}$ over $\left(\Sigma, f^{-1}\right)$ 
(cf. \eqref{def:cocliminv} below), the canonical holonomy $H_{ p \leftarrow z}^{s,-}$ is given by
$$
\begin{aligned}
H_{p \leftarrow z}^{s,-}= & \lim _{n \rightarrow \infty} \mathcal{C}^n(p)^{-1} \mathcal{C}^n(z)=\lim_{n \rightarrow \infty} L(\pi p)^{-1} \\
& {\left[\left((D_{p_0} h)^{\wedge t}\right)^n\left((D_{z_n} h)^{\wedge t} \right)^{-1} \ldots\left((D_{z_1} h)^{\wedge t}\right)^{-1}\right] L(\pi z) }
\end{aligned}
$$
and, since $L(\pi z)=L(\pi p)$, $H_{z \leftarrow p}^{u,-}=\text{Id}$. In particular
the holonomy loop for $\mathcal C$, defined by $\tilde{H}_{p, t}^{\{z\},-} :=H_{ p \leftarrow z}^{s,-} \circ H_{z \leftarrow p}^{u,-}$, relates to 
$\tilde{H}_{p_0,t}^{\left\{z_n\right\},-}$
in ~\eqref{defholnoninv} by the conjugacy relation
\begin{equation}\label{relation betwen loops}
   \tilde{H}_{p,t}^{\left\{z\right\},-}=L(\pi p)^{-1} \circ \tilde{H}_{p_0,t}^{\left\{z_n\right\},-} \circ L(\pi p) 
\end{equation}
\end{rem}
\color{black}

\begin{rem} The typicality of $\Lambda$ implies the typicality of the cocycle $\mathcal{C}$ in the sense of Definition~\ref{typical1}. Indeed, $\mathcal{C}(p)$ is equal to the inverse of $D_{p_0} h$ up to the identification $L(\pi p): \mathbb{R}^d \rightarrow T_{p_0} M$. This implies that $\mathcal{C}(p)$ satisfies the pinching condition if $D_{p_0} h$ does. Moreover, since the eigendirections of $\mathcal{C}(p)$ are equal to the eigendirections of its inverse $D_{p_0} h$, \eqref{relation betwen loops} implies that $\tilde{H}_{p,t}^{\left\{z\right\},-}$ satisfies the twisting condition  if $\tilde{H}_{p_0,t}^{\left\{z_n\right\},-}$ does.
\end{rem}

\begin{thm}\label{theorem for the attractor}
 Let $M$ be a Riemannian manifold, and let $h: M \rightarrow M$ be a $C^r$ map with $r>1$. Suppose $\Lambda \subset M$ is an $\alpha$-bunched repeller defined by $h$ for some $\alpha \in(0,1)$ satisfying $r-1>\alpha$.  Then there exist a $C^1$-open neighborhood $\mathcal{V}_1$ of $h$ in $C^r(M, M)$ and a $C^1$-open and $C^r$-dense subset $\mathcal{V}_2$ of $\mathcal{V}_1$ such that $\Lambda_g$ is typical for every $g \in \mathcal{V}_2$ and $$\begin{aligned}
h_{\text{top}}(E(\vec{\alpha}))
&= \inf_{q \in \R^d}\bigg\{P(g,\log \Psi^{q}(\left.D g\right|_{\Lambda_g}) )- \langle \vec{\alpha}, q \rangle \bigg\}\\
&=\sup \bigg\{h_{\mu}(g): \mu \in \M(g), \chi_{i}(\mu, \left.D g\right|_{\Lambda_g})=\alpha_i \text{ for }1\le i \le d \bigg\}.
\end{aligned}
$$
for $\vec{\alpha} \in \mathring{\vec{L}}.$
 \end{thm}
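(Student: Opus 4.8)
The plan is to reduce Theorem~\ref{theorem for the attractor} to the case of a typical fiber-bunched cocycle over a two-sided subshift of finite type, apply Theorems~\ref{thmA} and~\ref{thmD}, and then establish that the required typicality is $C^1$-open and $C^r$-dense among $\alpha$-bunched repellers. For the reduction, the first step is to code $h|_\Lambda$ symbolically: since $h$ is expanding, a Markov partition of small diameter (adapted to the norm, so that $2$ is a hyperbolicity constant for the shift, and, after replacing $h$ by an iterate if necessary, so that the coding is Hölder enough) produces a topologically mixing one-sided subshift $(\Sigma^{+},\sigma)$ and a Hölder surjection $\pi^{+}:\Sigma^{+}\to\Lambda$ with $h\circ\pi^{+}=\pi^{+}\circ\sigma$, which is finite-to-one and one-to-one off a set negligible for every invariant measure. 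Passing to the natural extension lifts $\pi^{+}$ to a Hölder surjection $\pi:\Sigma\to\Lambda$ over the associated two-sided mixing subshift $(\Sigma,\sigma)$. Trivialising $TM|_\Lambda$ over the cylinders via smooth maps $L_j$ as in~\eqref{definition of cocycle for Ansosov}, I would set
\begin{equation*}
\mathcal{C}(x):=L_k(\sigma^{-1}x)^{-1}\circ\big(D_{\pi(\sigma^{-1}x)}h\big)^{-1}\circ L_j(x),\qquad \sigma^{-1}x\in[k],
\end{equation*}
obtaining a $\beta$-Hölder $\glr$-cocycle over the mixing subshift $(\Sigma,\sigma^{-1})$. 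Because $\|(D_xh)^{-1}\|<\tfrac12$ on $\Lambda$ and $\|(D_xh)^{-1}\|^{1+\alpha}\|D_xh\|<1$, one gets $\|\mathcal{C}(x)\|\,\|\mathcal{C}(x)^{-1}\|=\|(D_xh)^{-1}\|\,\|D_xh\|<\|(D_xh)^{-1}\|^{-\alpha}<2^{\alpha}$, so $\mathcal{C}\in C^{\alpha}_b(\Sigma,\glr)$ and it admits canonical stable and unstable holonomies (the local unstable ones being trivial in these trivialisations, cf. the remark preceding~\eqref{relation betwen loops}).

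The next step is to record the dictionary between $h$ and $\mathcal{C}$. Since $\sigma_i(\mathcal{C}^n(\cdot))$ coincides with $\sigma_{d+1-i}(D h^n(\pi(\cdot)))^{-1}$ up to a uniformly bounded factor, the map $\pi$ together with the affine reindexing $\alpha_i\mapsto -\alpha_{d+1-i}$, $q_i\mapsto -q_{d+1-i}$ identifies the Lyapunov spectrum, the level sets $E(\vec\alpha)$, the topological pressure $P(h,\log\Psi^q(Dh|_\Lambda))$, and the metric entropies of invariant measures for $h$, with the corresponding objects for $\mathcal{C}$ over $(\Sigma,\sigma^{-1})$; here one uses that $\pi$ is a Hölder homeomorphism off a subsystem of strictly smaller topological complexity, hence preserves the topological entropy of arbitrary invariant sets (as in \cite{Bow}) and metric entropy of invariant measures. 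I would then introduce the notion of a \emph{typical repeller} by the pinching and twisting conditions on a periodic point $p_0$ and one of its pre-orbits $\{z_n\}$ as in the definition above, and observe that typicality of $\Lambda$ forces $\mathcal{C}$ to be typical in the sense of Definition~\ref{typical1}: $\mathcal{C}(p)$ has the same eigendirections as $(D_{p_0}h)^{\operatorname{per}(p_0)}$ and satisfies pinching, while the holonomy loop $\widetilde H_{p,t}^{\{z\},-}$ is conjugate to $\widetilde H_{p_0,t}^{\{z_n\},-}$ via~\eqref{relation betwen loops} and therefore inherits twisting.

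It then remains to prove that typical repellers are $C^1$-open and $C^r$-dense among $\alpha$-bunched ones. I would adapt the perturbation scheme of Bonatti--Viana \cite{BV} and Avila--Viana \cite{AV07-acta} to the non-invertible setting through the natural extension: a first $C^r$ perturbation of $h$, supported in a small neighbourhood of a periodic point $p_0$, arranges that $D_{p_0}h^{\operatorname{per}(p_0)}$ and all its exterior powers have simple spectrum with distinct moduli (an open property, stable under small $C^1$ perturbations); a second $C^r$ perturbation along a pre-orbit $\{z_n\}$ of $p_0$ makes the loop~\eqref{defholnoninv} twist all the eigendirections, also an open property. Both perturbations can be taken $C^1$-small, hence chosen so as to preserve the repeller structure and the $\alpha$-bunching inequality (note that the pointwise bunching inequality for $h$ passes to all iterates by submultiplicativity of $x\mapsto\|(D_xh^N)^{-1}\|^{1+\alpha}\|D_xh^N\|$). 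This produces a $C^1$-neighbourhood $\mathcal V_1$ of $h$ on which $\alpha$-bunching persists and a $C^1$-open, $C^r$-dense subset $\mathcal V_2\subset\mathcal V_1$ of maps generating typical repellers. Finally, for $g\in\mathcal V_2$ the cocycle $\mathcal{C}_g$ over $(\Sigma_g,\sigma_g^{-1})$ is typical and fiber-bunched, so Theorem~\ref{thmA} yields the Legendre-type formula and the entropy variational principle for the level sets of $\mathcal{C}_g$ on the relative interior of its Lyapunov spectrum and Theorem~\ref{thmD} yields the pressure variational principle; pushing these statements back through $\pi$ and undoing the affine reindexing of the second step — under which $\mathring{\vec L}$ for $\mathcal{C}_g$ corresponds to $ri(\Omega)$ for $Dg|_{\Lambda_g}$ — gives the claimed identities for every $\vec\alpha\in ri(\Omega)$.

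I expect the main obstacle to be the density step: unlike in \cite{BV,AV07-acta}, the absence of a stable bundle forces all of the twisting and pinching computations onto the inverse limit, and one must check carefully that the twisting perturbation along the \emph{infinite} pre-orbit $\{z_n\}$ of $p_0$ can be realised by a genuine, arbitrarily $C^r$-small perturbation of $h$ itself (rather than of an abstract cocycle over the shift), while keeping $\Lambda$ a repeller and leaving the $\alpha$-bunching inequality intact.
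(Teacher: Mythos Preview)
Your proposal is correct and follows essentially the same route as the paper: code the repeller by a two-sided shift via a Markov partition and the natural extension, realise the inverse derivative as a fiber-bunched cocycle $\mathcal{C}$ over $(\Sigma,\sigma^{-1})$, observe that typicality of the repeller implies typicality of $\mathcal{C}$, and then invoke Theorems~\ref{thmA} and~\ref{thmD}. The only substantive difference is that the step you flag as the main obstacle --- the $C^1$-openness and $C^r$-density of typical repellers --- is not proved in the paper but simply imported from \cite[Lemma~5.10]{park20}, and likewise the entropy/Lyapunov dictionary is drawn from \cite[Lemma~5.8]{park20}; your affine reindexing $\alpha_i\mapsto -\alpha_{d+1-i}$ is in fact a cleaner bookkeeping of that correspondence than the paper's own statement.
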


 \begin{proof}
 Let $h\in C^r(M,M)$ be as above.
   By \cite[Lemma 5.10]{park20}, there exists a $C^1$-open neighborhood $\mathcal{V}_1$ of $h$ in $C^r(M, M)$ and a $C^1$-open and $C^r$-dense subset $\mathcal{V}_2$ of $\mathcal{V}_1$ such that $\Lambda_g$ is typical for every $g \in \mathcal{V}_2$. 
   Hence we are left to prove the variational principles.

   \medskip
   Let us first recall some general facts. As $\Lambda$ is a repeller, there exists a finite Markov partition $\mathcal{R}$ for $\Lambda$, a one-sided subshift of finite type $(\Sigma^+,f^+)$, $\Sigma^+\subset \{1,2,\dots, q\}^\N$, and a H\"older continuous and surjective coding map
\begin{equation}
\label{defxi1}
\chi: \Sigma^{+} \rightarrow \Lambda    
\end{equation}
such that $\chi \circ f^{+}=h \circ \chi$. 
By fixing a Markov partition of sufficiently small diameter, we may ensure that the $\chi$-image of each cylinder $[j]$ of $\Sigma^{+}, 1 \leq j \leq q$, is contained in an open set on which $T M$ is trivializable. 

\medskip
Consider the natural extension $\left(\Sigma, f\right)$ of $\left(\Sigma^{+}, f^{+}\right)$, and its inverse $\left(\Sigma, f^{-1}\right.)$.
Let $\pi: \Sigma \rightarrow \Sigma^{+}$ denote the natural projection.
For each $1 \leq j \leq q$ and $y \in[j] \subset \Sigma^{+}$, let $L(y):=L_j(y): \mathbb{R}^d \rightarrow T_{\chi(y)} M$ be a fixed trivialization of $T M$ over an open neighborhood containing $\chi[j]$.  We define a cocycle $\mathcal{C}$ over $\left(\Sigma, f^{-1}\right)$ by
\begin{equation}
\label{def:cocliminv}
\mathcal{C}(x):=L\left(\pi f^{-1} x\right)^{-1} \circ\left(D_{\chi\left(\pi f^{-1} (x)\right)}h\right)^{-1} \circ L(\pi (x)),
\end{equation}
which can be thought of as the inverse of the 
derivative cocycle $D h\mid_\Lambda$
over $\left(\Sigma, f\right)$ defined in the obvious way.
For any $n \in \mathbb{N}$, we have
$$
\mathcal{C}^n\left(f^n (x)\right)=L(\pi x)^{-1}\left(D_{\chi(\pi x)} h^n\right)^{-1} L\left(\pi f^n (x)\right).
$$
Moreover, it relates to $\varphi_{\Lambda, n}^s: \Lambda \rightarrow \mathbb{R}$ by
$$
\varphi^s\left(\mathcal{C}^n\left(f^n (x)\right)\right)=\varphi_{\Lambda, n}^s(\chi(\pi (x))), \quad \forall s\in \mathbb R_{+}.
$$

We proceed to reduce the proof of the theorem to the invertible setting.  
By the proof of \cite[Lemma 5.8]{park20}, for any $\mu \in \M\left(f\right)$ and $\nu \in \mathcal{M}_{\text{inv}}(h)$ related by $\chi_* \mu=\nu$, we have
$$
h_\mu\left(f\right)=h_\nu(h)
\quad \text{and}\quad 
\chi_i(\mu, \mathcal{C})= \chi_i(\nu, Dh)
$$
for each $1\le i \le d$. 
Now, given $g\in \mathcal V_2$ the repeller $\Lambda_g$ is typical and, consequently, there cocycle $\mathcal{C}_g$ defined by \eqref{def:cocliminv} is a typical cocycle.  
Therefore, using the latter and Theorem~\ref{thmD}, we conclude that
$ 
P(g, \log \Psi^{q}(Dg_{|\Lambda_{g}}))=P(f, \log \Psi^{q}(\mathcal{C}_g)).
$  
Then, the result follows from Theorem \ref{thmA}.

 \end{proof}

We are now in a position to finish the proof of Theorem \ref{thmF}. Let $h \in C^r(M, M)$ be as in the statement of the theorem and let $g$ be a $C^1$-small perturbation of $h$. 
If the perturbation is sufficiently small, then 
one can use the same  trivialization over $T_{\Lambda} M$ to code the dynamics of $g$ on $\Lambda_g$ via a conjugacy $\chi_g$ (cf. equation~\eqref{defxi1}), and take its natural extension. Then we realize the perturbation $\left.h\right|_{\Lambda}$ to $\left.g\right|_{\Lambda_g}$ as the perturbation of the cocycle $\mathcal{C}$ to $\mathcal{C}_g$ over the same subshift of finite type $\left(\Sigma, f^{-1}\right.$ ). 
In this way, Theorem~\ref{thmF} is a direct consequence of Theorem \ref{theorem for the attractor}.

\subsection*{Acknowledgements.}
This work was initiated during a research visit of PV to Uppsala University, whose research conditions are greatly acknowledged. 
RM supported by the Knut and Alice Wallenberg Foundation and the Swedish Research Council grant 104651320. PV was partially supported by 
 CIDMA under the
Portuguese Foundation for Science and Technology 
(FCT, https://ror.org/00snfqn58)  
Multi-Annual Financing Program for R\&D Units,
grants UID/4106/2025 and UID/PRR/4106/2025.

\bibliographystyle{abbrv}
\bibliography{MV-finalv2}
\end{document}